\renewcommand{\ALG@beginalgorithmic}{\footnotesize}
\numberwithin{equation}{section}
\numberwithin{figure}{section}
\numberwithin{algorithm}{section}
\numberwithin{table}{section}
\renewcommand*{\thefootnote}{\fnsymbol{footnote}}
\title{Importance sampling for McKean-Vlasov SDEs}
\author{Greig Smith}
\author{
 	\normalsize Gon\c calo dos Reis \footnote{G. dos Reis acknowledges support from the \emph{Funda{\c c}$\tilde{\text{a}}$o para a Ci$\hat{e}$ncia e a Tecnologia} (Portuguese Foundation for Science and Technology) through the project UID/MAT/00297/2013 (Centro de Matem\'atica e Aplica\c c$\tilde{\text{o}}$es CMA/FCT/UNL).} \\[8pt]
         \small  University of Edinburgh, UK\\ 
         \small  and \\
	       \small  Centro de Matem\'atica e Aplica\c c$\tilde{\text{o}}$es, PT\\
         \small  G.dosReis@ed.ac.uk
 \and
         \normalsize Greig Smith \footnote{G. Smith was supported by The Maxwell Institute Graduate School in Analysis and its
Applications, a Centre for Doctoral Training funded by the UK Engineering and Physical
Sciences Research Council (grant EP/L016508/01), the Scottish Funding Council, the University of Edinburgh and Heriot-Watt
University.}  \\[8pt]
         \small  University of Edinburgh\\
	\small Maxwell Institute for Mathematical Sciences\\
	\small School of Mathematics\\
         \small  G.Smith-13@sms.ed.ac.uk
 \and
         \normalsize Peter Tankov\footnote{The research of Peter
           Tankov was supported by the FIME Research Initiative \texttt{www.fime-lab.org}}  \\[8pt]
         \small  CREST--ENSAE \\
         \small  Palaiseau, France\\
	       %\small  \\ 
         \small  peter.tankov@ensae.fr
}
\renewcommand{\thefootnote}{\arabic{footnote}}
\date{ \currenttime, \ddmmyyyydate\today\qquad{(File: \tt \jobname.tex})}
\theoremstyle{plain}
\newtheorem{theorem}{Theorem}[section]
\newtheorem{lemma}[theorem]{Lemma}
\newtheorem{proposition}[theorem]{Proposition}
\newtheorem{definition}[theorem]{Definition}
\newtheorem{remark}[theorem]{Remark}
\newtheorem{assumption}[theorem]{Assumption}
\newcommand{\bE}{\mathbb{E}}
\newcommand{\bH}{\mathbb{H}}
\newcommand{\bN}{\mathbb{N}}
\newcommand{\bP}{\mathbb{P}}
\newcommand{\bQ}{\mathbb{Q}}
\newcommand{\bR}{\mathbb{R}}
\newcommand{\bS}{\mathbb{S}}
\newcommand{\bW}{\mathbb{W}}
\newcommand{\cB}{\mathcal{B}}
\newcommand{\cE}{\mathcal{E}}
\newcommand{\cF}{\mathcal{F}}
\newcommand{\cN}{\mathcal{N}}
\newcommand{\cP}{\mathcal{P}}
\newcommand{\cU}{\mathcal{U}}
\newcommand{\cX}{\mathcal{X}}
\newcommand{\dd}{\mathrm{d}}
\definecolor{darkgreen}{rgb}{0,0.35,0}
\newcommand{\1}{\mathbbm{1}}
\begin{document}

%\selectlanguage{english}
%\setlength{\parindent}{0pt} %This is \nointent for the whole article.

\maketitle
%\begin{center}
%{\tt The current file is: \currfilename}
%\end{center}
%\tableofcontents
%%%%%%%%%%%%%%%%%%%%%%%%%%%%%%%%%%%%%%%%%%%%%%%%%%%%%%%%%%%%%%%%%
%%%%%%%%%%%%%%%%%%%%%%%%%%%%%%%%%%%%%%%%%%%%%%%%%%%%%%%%%%%%%%%%%
%%%%%%%%%%%%%%%%%%%%%%%%%%%%%%%%%%%%%%%%%%%%%%%%%%%%%%%%%%%%%%%%%
%%%%%%%%%%%%%%%%%%%%%%%%%%%%%%%%%%%%%%%%%%%%%%%%%%%%%%%%%%%%%%%%%
%%%%%%%%%%%%%%%%%%%%%%%%%%%%%%%%%%%%%%%%%%%%%%%%%%%%%%%%%%%%%%%%%
\renewcommand*{\thefootnote}{\arabic{footnote}}

\begin{abstract}
This paper deals with the Monte-Carlo methods for evaluating
expectations of functionals of solutions to
McKean-Vlasov Stochastic Differential Equations (MV-SDE) with drifts of super-linear growth. 
We assume that the MV-SDE is approximated in the standard manner by means of an 
interacting particle system and propose two importance sampling (IS)
techniques to reduce the variance of the resulting Monte Carlo
estimator. In the \emph{complete measure change} approach, the IS measure
change is applied simultaneously in the coefficients and in the
expectation to be evaluated. In the \emph{decoupling} approach we
first estimate the law of the solution in a first set of simulations
without measure change and then perform a second set of simulations
under the importance sampling measure using the approximate solution
law computed in the first step. 

For both approaches, we use large deviations techniques to
identify an optimisation problem for the candidate measure change.
The decoupling approach yields a far simpler optimisation problem than
the complete measure change, however, we can reduce the complexity of
the complete measure change through some symmetry arguments. We
implement both algorithms for two examples coming from the Kuramoto
model from statistical physics and show that the variance of the importance sampling schemes is up to 3 orders of magnitude
smaller than that of the standard Monte Carlo. The computational cost
is approximately the same as for standard Monte Carlo for the complete
measure change and only increases by a factor of 2--3 for the decoupled
approach. We also estimate the propagation of chaos error and find
that this is dominated by the statistical error by one order of magnitude.

\end{abstract}

\textbf{Key Words:} McKean-Vlasov Stochastic Differential Equation, interacting
particle system, Monte Carlo simulation, importance sampling, large deviations
\medskip

\textbf{MSC2010:} 65C05 (Monte Carlo methods), 65C30 (Stochastic differential
and integral equations), 65C35 (Stochastic particle methods)

%
%%%%%%%%%%%%%%%%%%%%%%%%%%%%%%%%%%%%%%%%%%%%%%%%%%%%%%%%%%%%%%%%%%%%%
%%%%%%%%%%%%%%%%%%%%%%%%%%%%%%%%%%%%%%%%%%%%%%%%%%%%%%%%%%%%%%%%%%%%%
%%%%%%%%%%%%%%%%%%%%%%%%%%%%%%%%%%%%%%%%%%%%%%%%%%%%%%%%%%%%%%%%%%%%%
%%%% SECTION
%%%%%%%%%%%%%%%%%%%%%%%%%%%%%%%%%%%%%%%%%%%%%%%%%%%%%%%%%%%%%%%%%%%%%

%
%
%
%
%%%%%%%%%%%%%%%%%%%%%%%%%%%%%%%%%%%%%%%%%%%%%%%%%%%%%%%%%%%%%%%%%%%%
%%%%%%%%%%%%%%%%%%%%%%%%%%%%%%%%%%%%%%%%%%%%%%%%%%%%%%%%%%%%%%%%%%%%
%%%%%%%%%%%%%%%%%%%%%%%%%%%%%%%%%%%%%%%%%%%%%%%%%%%%%%%%%%%%%%%%%%%%
%%% SECTION
%%%%%%%%%%%%%%%%%%%%%%%%%%%%%%%%%%%%%%%%%%%%%%%%%%%%%%%%%%%%%%%%%%%%
%\newpage
\section{Introduction}

The aim of this paper is to develop efficient importance sampling
algorithms for computing the expectations of
functionals of solutions to McKean-Vlasov stochastic differential
equations (MV-SDE). MV-SDE are stochastic differential equations where
the coefficients depend on the law of the solution, typically written
in the following form:
$$
\dd X_{t} = b(t,X_{t}, \mu_t)\dd t + \sigma(t,X_{t}, \mu_t)\dd W_{t},
\quad X_0 = x,
$$
where $\mu_{t}$ denotes the law of the process $X$ at time $t$, and
$W$ is a standard Brownian motion. MV-SDEs, also known as mean-field
equations, were originally introduced in physics to describe the
movement of an individual particle amongst a large number of
indistinguishable particles interacting through their mean field. They
are now used in a variety of other domains, such as finance,
economics, biology, population dynamics etc. 

Development of algorithms for the simulation of MV-SDEs is a very
active area of research. One of the earliest works to consider the
error and computational complexity involved in simulating a MV-SDE was
\cite{BossyTalay1997}. More recently \cite{GobetPagliarani2018},
\cite{SzpruchTanTse2017} and \cite{CrisanMcMurray2017} among others
(see references therein) developed more efficient methods for
simulating MV-SDEs under Lipschitz coefficients or stronger settings. 

A common technique for the simulation of MV-SDEs is to use the
interacting particle representation. Namely, we consider $i=1, \dots, N$ particles, where each $X^{i,N}$ satisfies the SDE with ${X}_{0}^{i,N}=x_0$
\begin{align}
\label{Eq:MV-SDE Propagation Intro}
\dd {X}_{t}^{i,N} 
= b\Big(t,{X}_{t}^{i,N}, \mu^{X,N}_{t} \Big) \dd t 
+ \sigma\Big(t,{X}_{t}^{i,N} , \mu^{X,N}_{t} \Big) \dd W_{t}^{i}, \quad
\mu^{X,N}_{t}(\dd x) := \frac{1}{N} \sum_{j=1}^N \delta_{X_{t}^{j,N}}(\dd x)
\end{align}
where $\delta_{{X}_{t}^{j,N}}$ is the Dirac measure at point
${X}_{t}^{j,N}$, and the Brownian motions $W^i, i=1,\dots,N$ are
independent. The so-called propagation of chaos result (see, e.g., \cite{Carmona2016Lectures}) states that
under sufficient conditions, as
$N\to \infty$, for every $i$, the process $X^{i,N}$ converges to
$X^i$, the solution of the MV-SDE driven by the Brownian motion
$W^i$. 

The system \eqref{Eq:MV-SDE Propagation Intro} is a system of
ordinary SDE and can be discretized with one of the many available
methods such as the Euler scheme. Let $X^{i,N,n}_t$ be the $i$-th
component of the solution of \eqref{Eq:MV-SDE Propagation Intro},
discretized on $[0,T]$ over $n$ steps. The quantity of interest,
which, in our case is $\theta = \mathbb E[G(X)]$, will then be approximated by
the Monte Carlo estimator
$$
\hat \theta^{N,n} = \frac{1}{N} \sum_{i=1}^N G(X^{i,N,n}). 
$$
The precision of this approximation is affected by three sources of
error.
\begin{itemize}
\item The statistical error, that is the difference between $\hat
  \theta^{N,n} $ and $\mathbb E[G(X^{i,N,n})]$. 
\item The discretization error, that is, the difference between
  $\mathbb E[G(X^{i,N,n})]$ and $\mathbb E[G(X^{i,N})]$.
\item The propagation of chaos error of approximating the MV-SDE with the interacting
  particle system, that is, the difference between $\mathbb
  E[G(X^{i,N})]$ and $\mathbb E[G(X)]$. 
\end{itemize}
The discretization error of ordinary SDEs has been analyzed by many
authors, and it is well known that, e.g., under the Lipschitz
assumptions the Euler scheme has weak convergence error of order
$\frac{1}{n}$. It is of course well known, the standard deviation of the statistical error is of order of $\frac{1}{\sqrt{N}}$.

There has also been some work detailing the error from the
propagation of chaos as a function of $N$, essentially for $G$ and $X$
nice enough the weak error is also of the order $\frac{1}{\sqrt{N}}$, see
for example \cite{KohatsuOgawa1997} and \cite{Bossy2004} for further
details. In spite of this relatively slow convergence, many MV-SDEs
have a reasonably ``nice'' dependence on the law which makes the
particle approximation a good technique. On the other hand, one often
wants to consider \emph{rare events} in the context of the MV-SDE, and in this realm the statistical error will dominate the propagation of chaos error. The focus of this paper is therefore on the statistical error
of the Monte Carlo method. In view of the poor convergence of the
standard Monte Carlo, it is typical to enhance the standard approach
with a so-called \emph{variance reduction} technique. Importance
sampling, which is the focus of this paper, is one such technique. We will discuss the point of statistical against propagation of chaos error in more detail in Section \ref{sec:Numerics}.

Importance sampling is based on the following identity, valid for any probability measure $\mathbb Q$ (absolutely continuous with respect to $\mathbb P$) 
$$
\mathbb E[G(X)] = \mathbb E_{\mathbb Q}\left[\frac{d\mathbb
    P}{d\mathbb Q}G(X)\right].
$$
The variance of the Monte Carlo estimator obtained by simulating $X$
under the measure $\mathbb Q$ and correcting by the corresponding
Radon-Nikodym density is different from that of the standard
estimator, and can be made much smaller by a judicious choice of the
sampling measure $\mathbb Q$. 

Importance sampling is most effective in the context of \emph{rare
  event simulation}, e.g., when the probability $\mathbb P[G(X)> 0]$
is small. Since the theory of large deviations is concerned with the
study of probabilities of rare events, it is natural to use measure
changes appearing in or inspired by the large deviations theory for
importance sampling. We refer, e.g., to \cite{DupuisWang2004}
and references therein for a review of this approach and to
\cites{GlassermanEtAl1999,GuasoniRobertson2008,Robertson2010}
for specific applications to financial models.  The large deviations
theory, on the one hand, simplifies the computation of the candidate
importance sampling measure, and on the other hand, allows to define
its optimality in a rigorous asymptotic framework.

The main contribution of this paper is two-fold. Firstly we show how
one can apply a change of measure to MV-SDEs, and propose
two algorithms that can carry this out: the \emph{complete measure change}
algorithm and the \emph{decoupling} algorithm. In the complete measure change approach, the IS measure
change is applied simultaneously in the coefficients and in the
expectation to be evaluated. In the decoupling approach we
first estimate the law of the solution in a first set of simulations
without measure change and then perform a second set of simulations
under the importance sampling measure using the approximate solution
law computed in the first step. 

Secondly, for both approaches, we use large deviations techniques to obtain an optimisation problem for the candidate measure change. We focus on the class of
Cameron-Martin transforms, under which the measure change is given by 
\begin{align}
\label{Eq:Radon Nikodym Derivative}
\frac{d\mathbb Q}{d\mathbb P}\Big|_{\mathcal F_T} = 
\cE\Big( \int_{0}^{T} f_{t} \dd W_{t}\Big) :=
\exp\left(  \int_{0}^{T} f_{t} \dd W_{t} - \frac{1}{2} \int_{0}^{T} f_{t}^{2} \dd t \right),
\end{align}
where $f_t$ is a deterministic function. Following earlier works on
the subject, we use the large deviations theory to construct a
tractable proxy for the variance of $G(X)$ under the new
measure. Of course, the presence of the interacting particle
approximation introduces additional complexity at this point. Moreover, unlike
the work of \cite{GuasoniRobertson2008} which considered a very
restrictive class of SDEs (the geometric Brownian motion), here we deal with a  general class of
MV-SDE where the drifts are of super-linear growth and satisfy a
monotonicity type condition. This is very important in practice
since many MV-SDEs fall into this category. 

We then minimise the large deviations proxy to obtain a candidate
optimal measure change for the two approaches that we consider. 
We find that the decoupling approach yields an easier optimisation
problem than the complete measure change, which results in a high
dimensional problem. However, by using exchangeability arguments the
latter problem can be transformed into a far simpler two dimensional one. 
We implement both algorithms for two examples coming from the Kuramoto
model from statistical physics and show that
the variance of
the importance sampling schemes is up to 3 orders of magnitude
smaller than that of the standard Monte Carlo. Moreover, the
computational only increases by a factor of 2--3 for the decoupling
approach and is approximately the same as standard Monte Carlo for the
complete measure change. We also estimate the propagation of chaos
error and find that this is dominated by the statistical error by one order of magnitude.
That being said, although the complete measure change appears to operate well in certain situations, it does rely on a change of measure which isn't too ``large''. We come back to this point throughout. 
\medskip 

Concerning the measure change paradigm, in this work we focus on
deterministic (open loop) measure changes over stochastic (feedback)
measure changes. This is a decision one faces when using importance
sampling and there are advantages and disadvantages to both. As
pointed out in \cite{GlassermanWang1997}, deterministic measure
changes may lead to detrimental results in terms of variance
reduction, however, the increase in computational time of the IS is
overall negligible. Stochastic measure changes as discussed in
\cite{DupuisWang2004} give improved variance reduction in far more
generality, however, calculating the measure change is computationally
burdensome, so the overall computational gain is less clear. As this
is the first paper to marry importance sampling with MV-SDEs we feel
it is beneficial to use deterministic based measure changes and leave
stochastic measure changes as interesting future work. We provide
precise conditions under which our deterministic measure change leads to an
asymptotically optimal importance sampling estimator in the class of all possible
measure changes.  Further, one of our algorithms requires a measure changed propagation of chaos result to hold (Proposition \ref{chaoscomplete}) and it is not clear how to prove such a result if one uses stochastic measure changes.
\medskip

The manuscript is organized as follows. In Section
\ref{Sec:Representation} we gather the preliminary results. In Section
\ref{sec:HowToIS} we discuss how importance sampling and measure
changes can be carried out for MV-SDE, and in Section \ref{Sec:Optimal
  Importance Sampling} we introduce our concept of optimality and identify the candidate optimal measure
changes using the theory of large deviations. Section \ref{sec:Numerics} illustrates numerically our results while proofs from Section \ref{Sec:Optimal Importance Sampling} are carried out in Section \ref{Sec:Proofs}.

\paragraph*{Acknowledgements}
The authors would like to thank Daniel Lacker (Columbia University), for the helpful discussion.

%
%
%
%
%%%%%%%%%%%%%%%%%%%%%%%%%%%%%%%%%%%%%%%%%%%%%%%%%%%%%%%%%%%%%%%%%%%%
%%%%%%%%%%%%%%%%%%%%%%%%%%%%%%%%%%%%%%%%%%%%%%%%%%%%%%%%%%%%%%%%%%%%
%%%%%%%%%%%%%%%%%%%%%%%%%%%%%%%%%%%%%%%%%%%%%%%%%%%%%%%%%%%%%%%%%%%%
%%% SECTION
%%%%%%%%%%%%%%%%%%%%%%%%%%%%%%%%%%%%%%%%%%%%%%%%%%%%%%%%%%%%%%%%%%%%

\section{Preliminaries}
\label{Sec:Representation}

Throughout the paper we work on a filtered probability space $(\Omega,
\cF, (\cF_{t})_{t \ge 0},  \bP)$ satisfying the usual conditions,
where $\cF_{t}$ is the augmented filtration of a standard multidimensional Brownian
motion $W$.
  
We consider some finite terminal time $T < \infty$ and use the following notation for spaces, which are standard in the McKean-Vlasov literature (see \cite{Carmona2016Lectures}). We define $\bS^{p}$ for $p \ge 1$, as the space of $\bR^{d}$-valued, $\cF_{\cdot}$-adapted processes $Z$, that satisfy, $\bE [ \sup_{0 \le t \le T} |Z(t)|^{p}]^{1/p} < \infty$. Similarly, $L_{t}^{p}(\bR^d)$, defines the space of $\bR^{d}$-valued, $\cF_{t}$-measurable random variables $X$, that satisfy, $\bE [|X|^{p}]^{1/p} < \infty$.

We will work with $\bR^d$, the $d$-dimensional Euclidean space of real numbers, and for $a=(a_1,\cdots,a_d)\in\bR^d$ and $b=(b_1,\cdots,b_d)\in\bR^d$ we denote by $|a|^2=\sum_{i=1}^d a_{i}^{2}$ the usual Euclidean distance on $\bR^d$ and by $\langle a,b\rangle=\sum_{i=1}^d a^i b^i$ the usual scalar product. 
%For some $N\in \bN$ define $\textbf{1}_N:=(1,\cdots,1)\in \bR^N$, the $N$-dimensional vector with all entries equal to one.

Given the measurable space $(\bR^{d}, \cB (\bR^{d}))$, we denote by
$\cP(\bR^{d})$ the set of probability measures on this space, and
write $\mu \in \cP_{2}(\bR^{d})$ if $\mu \in \cP(\bR^{d})$ and for
some $x \in \bR^{d}$, $\int_{\bR^{d}} |x-y|^{2} \mu(\dd y) <
\infty$. We then have the following metric on the space $\cP_2(\mathbb
R^d)$ (Wasserstein metric) for $\mu, ~ \nu \in \cP_{2}(\bR^{d})$ (see \cite{dosReisSalkeldTugaut2017}),
\begin{align*}
W^{(2)}(\mu, \nu) 
=
\inf_{\pi} \left\{
\left( \int_{\bR^{d} \times \bR^{d}} |x-y|^2 \pi( \dd x, \dd y) \right)^{\frac12}
~ :
~ \pi \in \cP(\bR^{d} \times \bR^{d})
~ 
\text{with marginals $\mu$ and $\nu$}
\right\} \, .
\end{align*}

\subsection{McKean-Vlasov stochastic differential equations}

Let $W$ be an $l$-dimensional Brownian motion and take the progressively measurable maps $b:[0,T] \times \bR^d \times\cP_2(\bR^d) \to \bR^d$ and $\sigma:[0,T] \times \bR^d \times \cP_2(\bR^d) \to \bR^{d\times l}$.
MV-SDEs are typically written in the form,
\begin{equation}
\label{Eq:General MVSDE}
\dd X_{t} = b(t,X_{t}, \mu_t)\dd t + \sigma(t,X_{t}, \mu_t)\dd W_{t},  \quad X_{0} =x_{0},
\end{equation}
where $\mu_{t}$ denotes the law of the process $X$ at time $t$, i.e.~$\mu_t=\bP\circ X_t^{-1}$. Consider the following assumption on the coefficients.
\begin{assumption}
\label{Ass:Monotone Assumption}
	Assume that $\sigma$ is Lipschitz in the sense that there
        exists $ L>0$ such that for all $t \in[0,T]$ and all $x, x'\in \bR^d$ and $\forall \mu, \mu'\in \cP_2(\bR^d)$ we have that
	$$
	 |\sigma(t, x, \mu)-\sigma(t, x', \mu')|\leq L(|x-x'| + W^{(2)}(\mu, \mu') ),
	$$and let $b$ satisfy
	\begin{enumerate}
		\item One-sided Lipschitz growth condition in $x$ and Lipschitz
                  in law: there exists $ L>0$ such that for all $t
                  \in[0,T]$, all $ x, x'\in \bR^d$ and all $\mu, \mu'\in \cP_2(\bR^d)$ we have that
		$$
		\langle x-x', b(t, x, \mu)-b(t, x',\mu) \rangle \leq L|x-x'|^{2}
		\quad \text{and} \quad
		|b(t, x, \mu)-b(t, x,\mu')| \le  W^{(2)}(\mu, \mu') .
		$$
		
		\item Locally Lipschitz with polynomial growth in $x$:
                  there exists $q \in \bN$ with $q>1$ such that for
                  all  $t \in [0,T]$, $\forall \mu \in
                  \cP_{2}(\bR^{d})$ and all $x, ~ x' \in \bR^{d}$ the following holds.
		 $$
		 |b(t, x, \mu)-b(t, x',\mu)| \leq L(1+ |x|^{q} + |x'|^{q}) |x-x'|  .
		 $$
	\end{enumerate}
\end{assumption} 

Under these assumptions, an existence and uniqueness result for the
solution of the MV-SDE is given in \cite{dosReisSalkeldTugaut2017}. Note that this can be generalised to include random initial conditions.
\begin{theorem}[\cite{dosReisSalkeldTugaut2017}*{Theorem 3.3}]
\label{Thm:MV Monotone Existence}
	Suppose that $b$ and $\sigma$ satisfy Assumption \ref{Ass:Monotone Assumption} and be continuous in time.
	Further, assume for some $m \ge 2$, $X_{0} \in L_{0}^{m}(\bR^{d})$.
	Then there exists a unique solution for $X\in \bS^{m}([0,T])$ to the MV-SDE \eqref{Eq:General MVSDE}. 
	For some positive constant $C$  we have
	\begin{align*}
	\mathbb E \big[ \sup_{t\in[0,T]} |X_{t}|^{m} \big] 
	\leq C \Big(\bE[|X_0|^m]
	+ \Big(\int_{0}^{T}b(t,0, \delta_{0}) \dd t \Big)^{m}
	+ \Big(\int_{0}^{T}\sigma(t,0, \delta_{0})^{2} \dd t \Big)^{m/2}
	\Big) e^{C T}.
	\end{align*} 
\end{theorem}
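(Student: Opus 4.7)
My plan is to combine the classical theory for one-sided Lipschitz SDEs with a fixed-point argument on the space of measure flows. Fix a continuous curve $t \mapsto \mu_t \in \mathcal{P}_2(\mathbb{R}^d)$ and consider the \emph{frozen} SDE
\begin{equation*}
\dd Y_t \;=\; b(t, Y_t, \mu_t)\,\dd t \;+\; \sigma(t, Y_t, \mu_t)\,\dd W_t, \qquad Y_0 = X_0.
\end{equation*}
Because $b(t,\cdot,\mu_t)$ is one-sided Lipschitz and locally Lipschitz with polynomial growth, and $\sigma(t,\cdot,\mu_t)$ is Lipschitz, standard results on monotone SDEs (Khasminskii-type existence theorems) yield a unique strong solution $Y \in \mathbb{S}^m$ together with the moment bound of the stated form. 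This defines a map $\Phi$ on the space $\mathcal{C}([0,T]; \mathcal{P}_2(\mathbb{R}^d))$ via $\Phi(\mu)_t := \mathbb{P}\circ Y_t^{-1}$, and a solution to \eqref{Eq:General MVSDE} is exactly a fixed point of $\Phi$.

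The main work is then to show $\Phi$ is a contraction (possibly on a short time window, then iterated over $[0,T]$). Given two frozen flows $\mu,\nu$, let $Y^{\mu}, Y^{\nu}$ be the corresponding solutions. Applying It\^o's formula to $|Y^\mu_t - Y^\nu_t|^2$, I would split the drift difference as
\begin{equation*}
b(t,Y^\mu_t,\mu_t) - b(t,Y^\nu_t,\nu_t) = \bigl[b(t,Y^\mu_t,\mu_t) - b(t,Y^\nu_t,\mu_t)\bigr] + \bigl[b(t,Y^\nu_t,\mu_t) - b(t,Y^\nu_t,\nu_t)\bigr].
\end{equation*}
The first bracket paired with $\langle Y^\mu - Y^\nu, \cdot\rangle$ is controlled by the one-sided Lipschitz constant $L$, and the second by the Lipschitz-in-law bound plus Young's inequality. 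Together with the Lipschitz estimate on $\sigma$ and BDG for the martingale part, Gr\"onwall's lemma gives
\begin{equation*}
\mathbb{E}\bigl[\sup_{s \le t}|Y^\mu_s - Y^\nu_s|^2\bigr] \;\le\; C \int_0^t W^{(2)}(\mu_s,\nu_s)^2\,\dd s.
\end{equation*}
Since $W^{(2)}(\Phi(\mu)_t, \Phi(\nu)_t)^2 \le \mathbb{E}|Y^\mu_t - Y^\nu_t|^2$, iterating this inequality produces a unique fixed point on $[0,T]$.

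For the moment bound I would apply It\^o to $|X_t|^m$ (or $(1+|X_t|^2)^{m/2}$ to avoid issues at zero), use the one-sided Lipschitz growth to obtain $\langle x, b(t,x,\mu)\rangle \le C(1+|x|^2 + W^{(2)}(\mu,\delta_0)^2) + |x||b(t,0,\delta_0)|$ via the splitting $b(t,x,\mu)=[b(t,x,\mu)-b(t,0,\mu)]+[b(t,0,\mu)-b(t,0,\delta_0)]+b(t,0,\delta_0)$, bound $W^{(2)}(\mu_t,\delta_0)^2 \le \mathbb{E}|X_t|^2$, and close the estimate by Gr\"onwall after applying BDG to the stochastic integral. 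The main obstacle is the super-linear growth in $x$: standard contraction arguments break down, and it is crucial that \emph{both} the $\mathbb{S}^m$ estimate and the pathwise uniqueness step rely on the one-sided Lipschitz property rather than full Lipschitz continuity, so that the polynomial-growth terms are dominated by the favourable sign of the inner product. The full detailed argument is carried out in \cite{dosReisSalkeldTugaut2017}*{Theorem~3.3}, and for our purposes we simply quote that statement.
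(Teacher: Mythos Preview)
The paper does not prove this theorem at all: it is stated as a direct citation of \cite{dosReisSalkeldTugaut2017}*{Theorem 3.3} with no accompanying argument. Your proposal ultimately does the same thing---you defer to that reference---while additionally supplying a correct high-level sketch of the frozen-SDE/fixed-point strategy and the It\^o-plus-one-sided-Lipschitz moment estimate, so there is nothing to compare and nothing missing.
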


\subsection{Large Deviation Principles}
In this section, we state the main results from the large deviations theory that we use throughout, for a full exposition the reader can consult texts such as \cite{DemboZeitouni2010} or \cite{DupuisEllis2011}.
The large deviation principle (LDP) characterizes the limiting behaviour,
as $\epsilon \rightarrow 0$, of a family of probability measures $\{
\mu_{\epsilon}\}$ in exponential scale on the space $(\cX ,
\cB_{\cX})$, with $\cX$ a topological space so that open and closed
subsets of $\cX$ are well-defined, and $\cB_{\cX}$ is the Borel $\sigma$-algebra on $\cX$. The limiting behaviour is defined via a so-called rate function. We assume the probability spaces have been completed, consequently, $\cB_{\cX}$ is the complete Borel $\sigma$-algebra on $\cX$. We have the following definition \cite{DemboZeitouni2010}*{pg.4}.
\begin{definition}
	[Rate function]
	A rate function $I$ is a lower semicontinuous mapping $I : \cX \rightarrow
	[0, \infty]$ (such that for all $\alpha \in [0,\infty)$, the level set $\Psi_{I}(\alpha)
	:= \{x : I(x) \le \alpha \}$ is
	a closed subset of $\cX$). A good rate function is a rate function for which all
	the level sets $\Psi_{I}(\alpha)$ are compact subsets of $\cX$. The effective domain of $I$,
	denoted $D_{I}$, is the set of points in $\cX$ of finite rate, namely, $D_{I} :=
	\{ x : I(x) < \infty \}$.
\end{definition}

We use the standard notation: for any set $\Gamma$, $\overline{\Gamma}$ denotes the closure and $\Gamma^{o}$ denotes the interior of $\Gamma$. As is standard practice in LDP theory, the infimum of a function over an empty set is interpreted as $\infty$. We then define what it means for this sequence of measures to have an LDP \cite{DemboZeitouni2010}*{pg.5}.
\begin{definition}
	A family of probability measures, $\{ \mu_{\epsilon}\}$ with $\epsilon >0$ satisfies the large deviation principle with a rate function $I$ if, for all $\Gamma \in \cB$,
	\begin{align}
	-\inf_{x \in \Gamma^{o}} I(x)
	\le 
	\liminf_{\epsilon \rightarrow 0} \epsilon \log \mu_{\epsilon}(\Gamma)
	\le
	\limsup_{\epsilon \rightarrow 0} \epsilon \log \mu_{\epsilon}(\Gamma)
	\le
	- \inf_{x \in \overline{\Gamma}} I(x)
	\, .
	\end{align}
\end{definition}
It is also typical to have LDP defined in terms of a sequence of random variables $Z_{\epsilon}$, in which case one replaces $\mu_{\epsilon}(\Gamma)$ by $\bP[ Z_{\epsilon}\in \Gamma]$.

The following result can be viewed as a generalisation of Laplace's approximation of integrals to the infinite dimensional setting and transfers the LDP from probabilities to expectations (see \cite{DemboZeitouni2010}).
\begin{lemma} [Varadhan's Lemma]
	\label{Lem:Varadhan}
	Let $\{\mu_{\epsilon}\}$ be a family of measures that satisfies a large deviation principle with good rate function $I$. Furthermore, let $Z_{\epsilon}$ be a family of random variables in $\cX$ such that $Z_{\epsilon}$ has law $\mu_{\epsilon}$ and let $\varphi : \cX \rightarrow \bR$ be any continuous function that satisfies the following integrability (moments) condition for some $\gamma >1$,
	\begin{align*}
	\limsup_{\epsilon \rightarrow 0} \epsilon \log \bE 
	\left[
	\exp \left( \frac{\gamma}{\epsilon} \varphi (Z_{\epsilon}) \right)
	\right]
	<
	\infty 
	\, .
	\end{align*} 
	Then,
	\begin{align*}
	\lim_{\epsilon \rightarrow 0} \epsilon \log \bE 
	\left[
	\exp \left( \frac{1}{\epsilon} \varphi (Z_{\epsilon}) \right)
	\right]
	=
	\sup_{x \in \cX} \left\{ \varphi(x)-I(x) \right \} \, .
	\end{align*} 
\end{lemma}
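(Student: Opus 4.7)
The plan is the classical two-sided proof of Varadhan's Lemma: establish the lower bound using only continuity and the LDP lower bound on open sets, and then establish the upper bound using the goodness of the rate function together with the moment hypothesis to control the contribution from $\{Z_\epsilon \notin K\}$ for large compact sets $K$.

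For the lower bound, fix an arbitrary $x_0 \in \cX$ with $I(x_0) < \infty$ and, for any $\delta > 0$, pick an open neighbourhood $B$ of $x_0$ on which $\varphi(x) \ge \varphi(x_0) - \delta$ (possible by continuity). Then
$$
\bE\bigl[\exp(\varphi(Z_\epsilon)/\epsilon)\bigr] \ge e^{(\varphi(x_0) - \delta)/\epsilon}\, \mu_\epsilon(B).
$$
Taking $\epsilon \log$, letting $\epsilon \downarrow 0$ and using the LDP lower bound for the open set $B$,
$$
\liminf_{\epsilon \to 0} \epsilon \log \bE\bigl[\exp(\varphi(Z_\epsilon)/\epsilon)\bigr] \ge \varphi(x_0) - \delta - I(x_0).
$$
Sending $\delta \downarrow 0$ and then taking the supremum over $x_0$ gives the desired lower bound.

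For the upper bound, the key step is a compact truncation. Because $I$ is good, the level sets $K_M := \Psi_I(M)$ are compact, so $\varphi$ is bounded on $K_M$. Split the expectation as
$$
\bE\bigl[e^{\varphi(Z_\epsilon)/\epsilon}\bigr] = \bE\bigl[e^{\varphi(Z_\epsilon)/\epsilon}\mathbf{1}_{\{Z_\epsilon \in K_M\}}\bigr] + \bE\bigl[e^{\varphi(Z_\epsilon)/\epsilon}\mathbf{1}_{\{Z_\epsilon \notin K_M\}}\bigr].
$$
For the first piece, use continuity of $\varphi$ to cover $K_M$ by finitely many open balls $B_1,\ldots,B_k$ on which $\varphi(x) \le \varphi(x_i) + \delta$, bound the expectation by $\sum_i e^{(\varphi(x_i)+\delta)/\epsilon}\mu_\epsilon(\overline{B_i})$, and apply the LDP upper bound on closed sets together with $\epsilon \log(\sum_i a_i^{1/\epsilon}) \to \max_i \log a_i$ to obtain
$$
\limsup_{\epsilon \to 0} \epsilon \log \bE\bigl[e^{\varphi(Z_\epsilon)/\epsilon}\mathbf{1}_{\{Z_\epsilon \in K_M\}}\bigr] \le \max_{i}\Bigl\{\varphi(x_i) + \delta - \inf_{\overline{B_i}} I\Bigr\} \le \sup_{x \in \cX}\{\varphi(x) - I(x)\} + \delta.
$$
For the second piece, apply H\"older's inequality with conjugate exponents $\gamma$ and $\gamma/(\gamma-1)$ to obtain
$$
\bE\bigl[e^{\varphi(Z_\epsilon)/\epsilon}\mathbf{1}_{\{Z_\epsilon \notin K_M\}}\bigr] \le \bE\bigl[e^{\gamma \varphi(Z_\epsilon)/\epsilon}\bigr]^{1/\gamma} \mu_\epsilon(K_M^c)^{(\gamma-1)/\gamma}.
$$
Taking $\epsilon \log$, the first factor is bounded by the moment hypothesis (call its limsup $C$), and since $K_M^c$ is open with $\inf_{K_M^c} I \ge M$, the LDP lower bound applied to the closure (or upper bound applied to $K_M^c$ itself, noting $\overline{K_M^c}\subset\{I\ge M\}$ by lower semicontinuity) gives $\limsup \epsilon\log\mu_\epsilon(K_M^c)\le -M$. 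Using $\log(a+b) \le \log 2 + \max(\log a, \log b)$ to recombine the two contributions, taking $\limsup_{\epsilon \to 0}$, then $M \to \infty$ and finally $\delta \downarrow 0$, yields the upper bound.

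The main obstacle is the upper bound on $\{Z_\epsilon \notin K_M\}$: without the moment condition the exponential functional could blow up on sets whose mass decays only subexponentially, and without the goodness of $I$ one could not confine the complement to a genuinely ``rare'' set. The role of the exponent $\gamma > 1$ is precisely to trade a little integrability for a factor $(\gamma-1)/\gamma > 0$ multiplying $-M$ in the H\"older estimate, which is what drives that term to $-\infty$ as $M \to \infty$. All remaining steps are routine interchange of $\epsilon\log$ with finite sums and suprema.
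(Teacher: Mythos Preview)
The paper does not give its own proof of Varadhan's Lemma; it merely states the result and cites \cite{DemboZeitouni2010}. Your sketch follows the classical two-step argument found there and is essentially correct, but there is one genuine technical slip worth flagging.

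In the tail estimate you assert that $\overline{K_M^c}\subset\{I\ge M\}$ ``by lower semicontinuity'', and use this to conclude $\limsup_{\epsilon\to0}\epsilon\log\mu_\epsilon(K_M^c)\le -M$. This containment is false in general: take $\cX=[0,1]$, $I(0)=0$ and $I(x)=2M$ for $x>0$; then $I$ is a good rate function, $K_M=\{0\}$, $K_M^c=(0,1]$, yet $\overline{K_M^c}=[0,1]$ and $\inf_{\overline{K_M^c}}I=0$. Lower semicontinuity gives $I(x)\le\liminf I(x_n)$, which is the wrong direction for your claim. The LDP upper bound only applies to closed sets, so passing to the closure of the open set $K_M^c$ can genuinely lose the bound.

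The standard fix (and what Dembo--Zeitouni actually do) is to truncate using closed superlevel sets of $\varphi$ rather than of $I$: split over $\{\varphi\ge M\}$ and $\{\varphi<M\}$. The first piece is handled exactly by your H\"older argument combined with the Markov bound $\mu_\epsilon(\varphi\ge M)\le e^{-\gamma M/\epsilon}\bE[e^{\gamma\varphi/\epsilon}]$, giving a contribution $\le C-(\gamma-1)M\to-\infty$. On $\{\varphi<M\}$ one has $\varphi$ bounded above, and the discretisation-by-level-sets argument goes through using the closed sets $\{\varphi\ge j\delta\}$, for which the LDP upper bound applies directly --- no compactness of level sets of $I$ is needed at this step. (A minor additional point: your ball cover needs two-sided control $|\varphi-\varphi(x_i)|\le\delta$, not just $\varphi\le\varphi(x_i)+\delta$, to compare $\varphi(x_i)$ with $\varphi$ at the approximate minimiser of $I$ on $\overline{B_i}$; this costs an extra $\delta$.) With these adjustments your proof is complete.
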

As is discussed in \cite{GuasoniRobertson2008}, one needs a slight extension to Varadhan's lemma to allow the function $\varphi$ to take the value $-\infty$. The extension is proved in \cite{GuasoniRobertson2008}.
\begin{lemma}
	Let $\varphi : \cX \rightarrow [-\infty, \infty)$ and assume the conditions in Lemma \ref{Lem:Varadhan} are satisfied. Then the following bounds hold for any $\Gamma \in \cB$
	\begin{align*}
	\sup_{x \in \Gamma^{0}} \{ \varphi(x)-I(x) \}
	& \le
	\liminf_{\epsilon \rightarrow 0} \epsilon \log \left( \int_{\Gamma^{o}} \exp\left( \frac{1}{\epsilon} \varphi(Z_{\epsilon}) \right) \dd \mu_{\epsilon} \right)
	\\ &
	\le
	\limsup_{\epsilon \rightarrow 0} \epsilon \log \left( \int_{\overline{\Gamma}} \exp\left( \frac{1}{\epsilon} \varphi(Z_{\epsilon}) \right) \dd \mu_{\epsilon} \right)
	%\\ &
	\le
	\sup_{x \in \overline{\Gamma}} \{ \varphi(x)-I(x) \} \, .
	\end{align*}
\end{lemma}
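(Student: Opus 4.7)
The plan is to prove the two nontrivial inequalities (the lower bound over $\Gamma^{o}$ and the upper bound over $\overline{\Gamma}$) separately, reducing each to the classical Varadhan lemma (Lemma~\ref{Lem:Varadhan}) and to the direct LDP bounds for $\{\mu_{\epsilon}\}$. The middle inequality is immediate from nonnegativity of the integrand and $\Gamma^{o}\subseteq\overline{\Gamma}$.

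For the lower bound, fix any $x_{0}\in\Gamma^{o}$ with $\varphi(x_{0})>-\infty$; if no such point exists then $\sup_{x\in\Gamma^{o}}\{\varphi(x)-I(x)\}=-\infty$ and the inequality is vacuous. Continuity of $\varphi:\cX\to[-\infty,\infty)$ at $x_{0}$ combined with the openness of $\Gamma^{o}$ yields an open neighbourhood $U\subset\Gamma^{o}$ of $x_{0}$ on which $\varphi\geq\varphi(x_{0})-\delta$ for arbitrary $\delta>0$. Then
$$
\int_{\Gamma^{o}}\exp\Big(\tfrac{1}{\epsilon}\varphi(Z_{\epsilon})\Big)\dd\mu_{\epsilon}\;\geq\;\exp\Big(\tfrac{\varphi(x_{0})-\delta}{\epsilon}\Big)\mu_{\epsilon}(U),
$$
and the LDP lower bound applied to the open set $U$ gives
$$
\liminf_{\epsilon\to 0}\epsilon\log\int_{\Gamma^{o}}\exp\Big(\tfrac{1}{\epsilon}\varphi(Z_{\epsilon})\Big)\dd\mu_{\epsilon}\;\geq\;\varphi(x_{0})-\delta-I(x_{0}).
$$
Sending $\delta\to 0$ and supping over admissible $x_{0}$ delivers the lower bound.

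For the upper bound the strategy is truncation, followed by a closed-set Varadhan argument and a final passage to the limit. For each $M>0$ set $\varphi_{M}:=\max(\varphi,-M)$, a continuous $\bR$-valued function satisfying $\varphi\leq\varphi_{M}\leq\varphi+M$; the latter inequality transfers the exponential moment condition of Lemma~\ref{Lem:Varadhan} from $\varphi$ to $\varphi_{M}$. Since $\exp(\varphi/\epsilon)\leq\exp(\varphi_{M}/\epsilon)$, it is enough to bound $\int_{\overline{\Gamma}}\exp(\varphi_{M}(Z_{\epsilon})/\epsilon)\dd\mu_{\epsilon}$ from above. A standard covering argument (for each $x\in\overline{\Gamma}$ choose an open neighbourhood on which $\varphi_{M}$ is $\eta$-close to $\varphi_{M}(x)$ and $I$ exceeds $I(x)-\eta$, use exponential tightness from the goodness of $I$ to restrict to a compact level set, extract a finite subcover and apply the LDP upper bound on each piece) yields
$$
\limsup_{\epsilon\to 0}\epsilon\log\int_{\overline{\Gamma}}\exp\Big(\tfrac{1}{\epsilon}\varphi_{M}(Z_{\epsilon})\Big)\dd\mu_{\epsilon}\;\leq\;\sup_{x\in\overline{\Gamma}}\{\varphi_{M}(x)-I(x)\}.
$$

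It remains to send $M\to\infty$. The sequence $\varphi_{M}$ decreases pointwise to $\varphi$, so the right-hand suprema are monotone in $M$ with limit bounded below by $\sup_{\overline{\Gamma}}\{\varphi-I\}$; for the converse I would take a near-optimising sequence $(x_{M})\subset\overline{\Gamma}$, observe that it lies eventually inside a compact level set $\{I\leq\alpha\}$ by goodness of $I$, and pass to a convergent subsequence, exploiting upper semicontinuity of $\varphi-I$ on its effective domain. The main technical obstacle is precisely this commutation of the outer $\limsup_{\epsilon}$ with the truncation limit $M\to\infty$: it hinges on the compactness of sublevel sets of $I$ and on the extended-real continuity of $\varphi$, so that candidate limit points at which $\varphi=-\infty$ contribute $-\infty$ to the bound and therefore cannot spoil the inequality.
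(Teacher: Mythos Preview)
The paper does not prove this lemma itself; it simply records that ``The extension is proved in \cite{GuasoniRobertson2008}.'' Your proposal therefore cannot be compared against the paper's own argument, but the truncation strategy you outline is the standard route to this extension and is essentially sound.

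Two points need tightening. First, the claimed inequality $\varphi_{M}\leq\varphi+M$ is false wherever $\varphi=-\infty$; the moment condition nonetheless transfers to $\varphi_{M}$ because $\varphi_{M}\leq\varphi^{+}:=\max(\varphi,0)$ and $\exp(\gamma\varphi^{+}/\epsilon)\leq\exp(\gamma\varphi/\epsilon)+1$. Second, in the final limit step your justification that $(x_{M})$ eventually lies in a compact level set of $I$ is incomplete: from $\varphi(x_{M})-I(x_{M})\geq L-1$ alone one cannot bound $I(x_{M})$ unless $\varphi$ is bounded above. The moment condition closes this gap (Varadhan applied to $\gamma\varphi$ gives $\gamma\varphi\leq I+C$, hence $\varphi-I\leq -(1-\gamma^{-1})I+C\gamma^{-1}$, and the lower bound on $\varphi(x_{M})-I(x_{M})$ then caps $I(x_{M})$). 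Alternatively, and more cleanly, you can bypass compactness altogether: since $I\geq 0$, one has $\varphi_{M}(x)-I(x)\leq\max\bigl(\varphi(x)-I(x),\,-M\bigr)$ for every $x$, so
\[
\sup_{x\in\overline{\Gamma}}\{\varphi_{M}(x)-I(x)\}\;\leq\;\max\Bigl(\sup_{x\in\overline{\Gamma}}\{\varphi(x)-I(x)\},\,-M\Bigr),
\]
and sending $M\to\infty$ yields the desired upper bound directly, whether $\sup_{\overline{\Gamma}}\{\varphi-I\}$ is finite or $-\infty$.
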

The previous lemma allows us to control the $\liminf$ and $\limsup$ of
the process even when they are not equal (as is the case in Varadhan's lemma).

\subsection{Importance Sampling and large deviations}
\label{sec:MinimiseVariance}

To motivate our approach  we recall ideas from the pioneering works \cite{GlassermanEtAl1999}, \cite{GuasoniRobertson2008} and \cite{Robertson2010} which establish a connection between large deviations and importance sampling.
Importance sampling uses the following idea. Consider the problem of estimating $\bE_{\bP}[G(X)]$ where $X$ is some random variable/process governed by probability $\bP$. Through Radon-Nikodym theorem we can rewrite this expectation under a new measure $\bQ$ weighted by the Radon-Nikodym derivative, thus $\bE_{\bP}[G(X)]=\bE_{\bQ}[G(X) \frac{\dd \bP}{\dd \bQ}]$. Although the expectations (first moments) are the same, the variance under $\bQ$ is,
\begin{align}
\label{Eq:Variance under Q}
\text{Var}_{\bQ} \Big[G(X) \frac{\dd \bP}{\dd \bQ} \Big]=
\bE_{\bP}\Big[ G(X)^{2} \frac{\dd \bP}{\dd \bQ} \Big]
-\bE_{\bP}\Big[G(X)\Big]^2 \, .
\end{align}
As it turns out, if one can choose $\frac{\dd \bQ}{\dd \bP} = \frac{G}{\bE_{\bP}[G]}$, then the variance under $\bQ$ is zero, i.e. we have no error in our Monte Carlo simulation. Unfortunately though, in order to choose such a change of measure one would need to a priori know the value of $\bE_{\bP}[G(X)]$ i.e. the value we wish to estimate in the first place.

Instead one typically chooses $\bQ$ to minimise \eqref{Eq:Variance
  under Q} over a set of equivalent probability measures, chosen to
add only a small amount of extra computation  and such that the process
$X$ is easy to simulate under the new measure. Specializing to the
Brownian filtration, a common choice of $\mathbb Q$ is the Girsanov
transform, \eqref{Eq:Radon Nikodym Derivative} where $f$ is often taken to be a deterministic function. 

For example in \cite{TengEtAl2016} the authors
develop an importance sampling procedure in the context of Gaussian
random vectors through a so-called ``tilting''
parameter, which  corresponds to
shifting the mean of the Gaussian random vector via a Girsanov transform. Although this method
is intuitive, it still requires estimation of the Jacobian
of $G$ w.r.t. the tilting parameter and applying Newton's method to
select the optimal parameter value. These steps can be computationally
expensive, and it is difficult to obtain rigorous optimality results.  

Even after one has reduced the set of measures $\bQ$ to optimise over,
in general the problem of minimizing \eqref{Eq:Variance
  under Q} will not have a closed form solution. Thus we instead minimize a proxy for the variance obtained in the so-called small noise asymptotic regime as discussed in
\cite{GlassermanEtAl1999} and \cite{GuasoniRobertson2008}. Assuming
that a Girsanov change of measure is used, we want to minimise %the following quantity,
\begin{align}
\label{girsvar}
\bE_{\bP}\left[ G(W)^{2} \frac{\dd \bP}{\dd \bQ} \right]
=
\bE_{\bP} \left[ 
\exp\left( 2 F(W) - \int_{0}^{T} f_{t} \dd W_{t} + \frac{1}{2} \int_{0}^{T} f_{t}^{2} \dd t \right)
\right],\quad \textrm{with } F=\log (G).
\end{align}
Typically $G$ is defined as a functional of the SDE,
but here with a slight abuse of notation we have redefined it as the
functional of the driving Brownian motion. It is important for this
type of argument that we are able to write the solution of the SDE in
terms of BM as well, i.e. we can write $X_{t}=H(t,
W_{\cdot})$. Finding the optimal $f$ by minimizing \eqref{girsvar} is
in general intractable, hence an asymptotic approximation of the
variance should be constructed. Let us consider, 
\begin{align*}
\epsilon \log \bE_{\bP} \left[ 
\exp\left( \frac{1}{\epsilon}\left(2 F(\sqrt{\epsilon} W) - \int_{0}^{T} \sqrt{\epsilon} f_{t} \dd W_{t} + \frac{1}{2} \int_{0}^{T} f_{t}^{2} \dd t \right) \right)
\right] \, ,
\end{align*}
which equals $\log$ of \eqref{girsvar} when $\epsilon=1$, the \emph{small noise asymptotic approximation} is then,
\begin{align*}
L(f) :=
\limsup_{\epsilon \rightarrow 0}
\epsilon \log \bE_{\bP} \left[ 
\exp\left( \frac{1}{\epsilon}\left(2 F(\sqrt{\epsilon} W) - \int_{0}^{T} \sqrt{\epsilon} f_{t} \dd W_{t} + \frac{1}{2} \int_{0}^{T} f_{t}^{2} \dd t \right) \right)
\right] \,.
\end{align*}
One then computes a candidate variance reduction parameter $f^*$ by
minimizing $L(f)$, which can be thought of as approximating
$\bE_{\bP}\left[ G(W)^{2} \frac{\dd \bP}{\dd \bQ} \right]$ by
$\exp(L(f))$. Crucially, $L$ is in a form that can be evaluated using
the Varadhan's lemma, i.e.,  we can change $L$ into a supremum
depending on the rate function. The
parameter $f^*$, which minimises $L$ over some predefined space is known as \emph{asymptotically optimal}, see \cite{GuasoniRobertson2008}. We will give a precise definition of this concept later.
It is important to note, these approximations are not approximations for the original problem (calculate $\bE_{\bP}[G(X)]$), they are only approximations to help choose the change of measure we want to apply.

%
%
%
%
%%%%%%%%%%%%%%%%%%%%%%%%%%%%%%%%%%%%%%%%%%%%%%%%%%%%%%%%%%%%%%%%%%%%
%%%%%%%%%%%%%%%%%%%%%%%%%%%%%%%%%%%%%%%%%%%%%%%%%%%%%%%%%%%%%%%%%%%%
%%%%%%%%%%%%%%%%%%%%%%%%%%%%%%%%%%%%%%%%%%%%%%%%%%%%%%%%%%%%%%%%%%%%
%%% SECTION
%%%%%%%%%%%%%%%%%%%%%%%%%%%%%%%%%%%%%%%%%%%%%%%%%%%%%%%%%%%%%%%%%%%%

\section{Importance sampling for MV-SDEs}
\label{sec:HowToIS}

Leaving LDPs and the optimality of the IS (importance sampling) on the
side, let us discuss how IS can be achieved for MV-SDEs with a given
measure change.

Recall that MV-SDEs take the form \eqref{Eq:General MVSDE}. Because we change the measure we make explicit the dependence on the law of the solution process $\mu^X_{t,\bP} = \bP\circ X_t^{-1}$. 
 If one knows the law $\mu^X$ beforehand , then one can treat the
 MV-SDE as a ``standard'' SDE and use IS as usual. However, typically
 one does not have access to the law, and the MV-SDE must be
 approximated by a so-called particle system approximation. 

\textbf{The interacting particle system approximation.} We approximate \eqref{Eq:General MVSDE} (driven by the $\bP$-Brownian motion $W^\bP$), using an $N$-dimensional system of interacting particles. Let $i=1, \dots, N$ and consider $N$ particles $X^{i,N}$ satisfying the SDE with ${X}_{0}^{i,N}=x_0$ 
\begin{align}
\label{Eq:MV-SDE Propagation}
\dd {X}_{t}^{i,N} 
= b\Big(t,{X}_{t}^{i,N}, \mu^{X,N}_{t} \Big) \dd t 
+ \sigma\Big(t,{X}_{t}^{i,N} , \mu^{X,N}_{t} \Big) \dd W_{t}^{i, \bP}, \quad
\mu^{X,N}_{t}(\dd x) := \frac{1}{N} \sum_{j=1}^N \delta_{X_{t}^{j,N}}(\dd x)
\end{align}
where $\delta_{{X}_{t}^{j,N}}$ is the Dirac measure at point
${X}_{t}^{j,N}$, and the independent $\bP$-Brownian motions $W^{i,\bP}, i=1,\dots,N$ (also independent of the BM $W^\bP$ appearing in \eqref{Eq:General MVSDE}). Due to the several changes of the measure throughout this section we keep track of which $W$ we refer to. 
\begin{remark}[On the empirical measure $\mu^{X,N}_{t}$]
Unlike standard measures, empirical measures do not have dependence on the underlying measure $\bP$, namely empirical measures are maps that depend on a sequence of $\omega^{i} \in \Omega$, thus one should write $\mu^{X,N}_{t}$ instead of $\mu^{X,N}_{t,\bP}$. 
Of course, this is a pathwise statement, since the $\omega^{i}$ are generated under $\bP$, the \emph{distribution} of the empirical measure does depend on $\bP$.
\end{remark}

\textbf{Propagation of chaos.}
In order to show that the particle approximation is of use, one shows a so-called propagation of chaos result. Although different types exist a common one is the pathwise convergence result where we consider the system of non interacting particles  
\begin{align}
	\label{Eq:Non interacting particles}
	\dd X_{t}^{i} = b(t, X_{t}^{i}, \mu^{X^{i}}_{t,\bP}) \dd t + \sigma(t,X_{t}^{i}, \mu^{X^{i}}_{t,\bP}) \dd W_{t}^{i, \bP}, \quad X_{0}^{i}=x_0 \, ,\quad t\in [0,T] \, ,
\end{align}
which are of course just MV-SDEs and since the $X^{i}$s are
independent, then $\mu^{X^{i}}_{t,\bP}=\mu^{X}_{t,\bP}$ $\forall
i$. Under sufficiently nice conditions, one can then prove the
following convergence result (see \cite{Carmona2016Lectures}*{Theorem 1.10} for
example)
\begin{align*}
\lim_{N \rightarrow \infty} \sup_{1 \le i \le N}
\bE_{\bP} \left[ 
\sup_{0 \le t \le T} |X_{t}^{i,N} - X_{t}^{i}|^{2}
\right] = 0 \, .
\end{align*}
Note that, for all SDEs appearing below, we have initial condition $x_{0}$ and work on the interval $[0,T]$.

\textbf{Setup to change measures.} When it comes to changing the measure under which we simulate we are also changing our approximation of the law. Since MV-SDEs depend explicitly on the law, this makes importance sampling more difficult. This will be one of the main points throughout this section.

Fix a deterministic square-integrable function  $\dot h\in
L_{0}^2(\bR)$.
 Then one can define
the probability measure $\bQ$ via the Girsanov transform 
$\frac{d \bQ}{d\bP}|_{\cF_{T}}:=\cE(\int_{0}^{T} \dot{h}_{t} \dd
W_{t}^{\bP})$, see \eqref{Eq:Radon Nikodym Derivative}, so that $\dd W^{\bQ}_t= \dd W_{t}^{\bP}-\dot h_t \dd t$
is a $\mathbb Q$-Brownian motion. We note that the Radon-Nikodym density $\frac{\dd \bQ}{\dd
  \bP}|_{\cF_{t}}=\cE(\int_0^\cdot \dot h_s \dd W_s^{\bP})_t=:\cE_t$ is itself the
solution of the SDE
\begin{align*}
\dd \cE_{t} = \dot h_t \cE_{t} \dd W_t^{\bP}, \quad \cE_{0}=1
\quad\Rightarrow\quad
\cE_{t}=\exp\Big\{ \int_0^t \dot h_s \dd W_s^{\bP} - \frac12 \int_0^t  |\dot h_s|^2 \dd s \Big\}.
\end{align*}
Since $\bP$ and $\bQ$ are equivalent, one can also define $Z_{t}:=
\cE_{t}^{-1} :=\frac{\dd \bP}{\dd \bQ}|_{\cF_{t}}$. With our conditions on $\dot{h}$ it is also a straightforward task to show $\cE_{t}$ and $Z_{t}$ are in $\bS^{p}$ for all $p \ge 1$.

Recall our goal: \emph{estimate
  $\bE_{\bP}[G(X_{T})]=\bE_{\bQ}[G(X_{T}) \frac{\dd \bP}{\dd \bQ}]$
  for some function $G$ by simulating $X$ under $\bQ$}. In the
following paragraphs we present two alternative ways to achieve this
goal.

\textbf{A running example.} We present our algorithm in general setting with \eqref{Eq:General MVSDE}. For the sake of clarity and easiness of presentation, we often recourse to a particular class of MV-SDEs (under $\bP$),
\begin{align}
\label{Eq:Simple Example MV-SDE}
\dd X_{t} = \hat{b}\big(t, X_{t}, \bE_{\bP}[f(X_{t})]\big) \dd t + \sigma\dd W_{t}^{\bP}, \quad X_{0}=x_0 \, ,\quad t\in [0,T] \, .
\end{align}
with $\sigma\in\bR_{+}$ and $f,\hat{b}$ nice\footnote{We use $\hat{b}$ here since it takes the expectation rather than a measure input.}. We believe many of the arguments that are used at this level can be extended to cover more general MV-SDEs (such as higher order interactions). However, obtaining analogous results to those of standard MV-SDEs, such as propagation of chaos, is made more challenging by the inclusion of the measure change. Therefore, these have to be considered on a case by case basis.

\subsection{Fixing the Empirical Law - a decoupling argument}
\label{sec:DecouplingIS}

An obvious way to solve the problem of IS is to approximate the law of
the MV-SDE under $\mathbb P$ and use that as a fixed input to a new
equation which will be simulated under $\mathbb Q$. In this set up the McKean-Vlasov SDE turns into an SDE with random coefficients. The algorithm is as follows.
\begin{enumerate}
	\item Use \eqref{Eq:MV-SDE Propagation} with $N$ particles to approximate \eqref{Eq:General MVSDE}.
	 Use some numerical scheme (under $\bP$, say Euler) to simulate the particles in time, calculating an empirical law over $[0,T]$. This gives an approximation for the empirical law $\mu^{N}_{t}$ which is then fixed. 
	
Define a new SDE, approximating the original MV-SDE \eqref{Eq:General MVSDE},  which is now a \emph{standard SDE with random coefficients}
\begin{align}
\label{eq:Decoupled SDE}
\dd \bar X_{t} 
= b(t, \bar X_{t}, \mu^{N}_{t}) \dd t 
+ \sigma(t,\bar X_{t}, \mu^{N}_{t}) \dd W_{t}^{\bP}, \quad \bar X_{0}=x_0,
\end{align}	
where $W^{\bP}$ is a $\bP$-Brownian motion independent of the $\{W^{i,\bP}\}_{i=1,\cdots,N}$ appearing in \eqref{Eq:MV-SDE Propagation}.
	SDEs with random coefficients appear typically in optimal control, hence the reader can consult texts such as \cite{YongZhou1999}*{Chapter 1} for further details on existence uniqueness of such SDEs.

	\item Change the probability measure to $\bQ$, which is our importance sampling measure change. Simulate \eqref{eq:Decoupled SDE} under this new measure, i.e.
\begin{align*}
\dd \bar{X}_{t} = \left( b(t, \bar{X}_{t}, \mu^{N}_{t}) + \dot{h}_{t} \sigma(t, \bar{X}_{t}, \mu^{N}_{t})  \right) \dd t 
+ \sigma(t, \bar{X}_{t}, \mu^{N}_{t}) \dd W_{t}^{\bQ}, \quad \bar{X}_{0}=x_0 \, .
\end{align*}

	\item This second run is therefore standard importance sampling, but the SDE has random coefficients i.e. the empirical law is random. 
\end{enumerate}

We will refer to algorithms of this form as \emph{Decoupling Algorithms}.
This scheme has the disadvantage in that it requires twice the amount of simulation and one will require a handle on the error coming from the original approximation of the law.

It is not a requirement to use interacting particles to approximate the law of the SDE, any approximation will work. The goal here is to make the SDEs independent.

\subsection{Complete Measure Change}

\label{Sec:Complete Measure Change}
An alternative is to change the measure under which we are simulating in the coefficients \emph{and} the Brownian motion. This is not a simple problem and as far as we are aware changing the measure of a MV-SDE and its particle approximation is not discussed elsewhere in the literature (for this purpose\footnote{Measures changes for MV-SDE appear in methods requiring to remove the drift altogether, for instance in establishing weak solutions to MV-SDEs, see e.g. \cite{DawsonGaertner1987-DG1987}.}), we therefore provide a discussion along with the pitfalls here. This is more complex than the decoupled case and for clarity we use \eqref{Eq:Simple Example MV-SDE} throughout.

The measure changed version of \eqref{Eq:Simple Example
    MV-SDE} takes the form,
\begin{align*}
\dd X_{t} &= \Big( \hat{b}(t, X_{t}, \bE_{\bP}[f(X_{t})]) + \sigma
            \dot{h}_{t} \Big) \dd t + \sigma\dd W_{t}^{\bQ} \\
&=\Big( \hat{b}(t, X_{t}, \bE_{\bQ} \Big[ f(X_{t}) Z_{t}\Big]) + \sigma \dot{h}_{t} \Big) \dd t + \sigma\dd W_{t}^{\bQ} .
\end{align*}
where again $Z := \cE^{-1}$.

In view of simulation, we re-write the measure changed MV-SDE as a system
\begin{align*}
\dd X_{t} &= \Big( \hat{b} \Big(t, X_{t}, \bE_{\bQ} \big[ F(X_t,Z_t)\big]\Big) + \sigma \dot{h}_{t} \Big) \dd t + \sigma\dd W_{t}^{\bQ},
\quad
\text{and}
\quad
\dd Z_{t} = \dot h_t Z_{t} \dd W_{t}^{\bQ}, \quad Z_0=1 \, ,
\end{align*}
where $F(x,y)=f(x) y$. We now write the interacting particle system for the pair $X,Z$ under
$\bQ$:
\begin{align}
\label{Eq:interacting particles-v01}
\dd X_{t}^{i,N} &= \Big( \hat{b} \big(t, X_{t}^{i,N}, \frac1N \sum_{j=1}^N F(X_{t}^{j,N},Z^{j,N}_t) \big) + \sigma \dot{h}_{t} \Big) \dd t + \sigma\dd W_{t}^{i,\bQ},
\\
\dd Z^{i,N}_t
&=\dot h_t Z^{i,N}  \dd W^{i,\bQ}_t, 
 \quad
 Z_{0}^{i,N} =1 \, . \notag
\end{align}
The importance sampling estimator of $\theta = \mathbb E^{\mathbb P}[G(X_T)]$ then takes
the form
\begin{align}
\hat\theta_h = \frac{1}{N}\sum_{i=1}^N Z^{i,N}_T G(X^{i,N}_T). \label{estimatorcomplete}
\end{align}
\begin{remark}
One may be tempted to use a different approach, namely first apply an
interacting particle approximation under $\mathbb P$ which yields
\begin{align*}
\dd X_{t}^{i,N} &= \hat{b} \big(t, X_{t}^{i,N}, \frac1N \sum_{j=1}^N f(X_{t}^{j,N}) \big) \dd t + \sigma\dd W_{t}^{i,\bP},
\end{align*}
and then change the measure for the particle system, writing
\begin{align*}
\dd X_{t}^{i,N} &= \Big(\hat{b} \big(t, X_{t}^{i,N}, \frac1N
                  \sum_{j=1}^N f(X_{t}^{j,N}) \big) + \sigma\dot h_t\Big)\dd t + \sigma\dd W_{t}^{i,\bQ},
\end{align*}
where we have taken the same $\dot h$ for every Brownian motion in
order for all particles to have the same law. However, it is easy to
see by the standard propagation of chaos result that as $N\to \infty$,
this particle system converges to the solution of the MV-SDE
$$
\dd X_{t} = \Big(\hat{b} \big(t, X_{t}, 
                  \mathbb E^{\mathbb Q}[X_t] \big) + \sigma\dot h_t\Big)\dd t + \sigma\dd W_{t}^{\bQ}=\hat{b} \big(t, X_{t}, 
                  \mathbb E^{\mathbb Q}[X_t] \big) \dd t + \sigma\dd W_{t}^{\bP},
$$
which is not what one is looking for. 

\end{remark}

To state a propagation of chaos result for the particle system \eqref{Eq:interacting particles-v01} we introduce the auxiliary system of non-interacting particles,
\begin{align}
\label{eq:non-interacting particles-v01}
	\dd X_{t}^{i} & =
	 \Big( \hat{b} \Big(t, X_{t}^{i}, \bE_{\bQ} \big[ F(X_{t}^{i},Z_{t}^{i})\big] \Big) + \sigma \dot{h}_{t} \Big) \dd t + \sigma\dd W_{t}^{i,\bQ}\, ,
	\\
	\nonumber
	\dd Z^{i}
	&= \dot h_t Z^{i} \dd W^{i,\bQ}_t, 
	\quad Z^{i}=1 \, .
\end{align}

\begin{proposition}\label{chaoscomplete}
	Consider the following measure changed MV-SDE (see \eqref{eq:non-interacting particles-v01}),
	\begin{align}
	\label{Eq:Measure changed example}
	\dd X_{t}^{i} & =
	\Big( \hat{b}\Big(t, X_{t}^{i}, \bE_{\bQ} \Big[ f(X_{t}^{i}) Z_{t}^{i} \Big]\Big) + \sigma \dot{h}_{t} \Big) \dd t + \sigma\dd W_{t}^{i,\bQ}\, ,
	%\\
	%\nonumber
	\quad
	\dd Z_{t}^{i}
	= \dot h_t Z_{t}^{i} \dd W^{i,\bQ}_t, 
	\quad Z_{0}^{i}=1 \, ,
	\end{align}
	where $\hat{b}$ is continuous in time, $\hat{b}$ and $f$ are Lipschitz in space, and $\hat{b}$ is a bounded Lipschitz function in its third variable. Let $X_{t}^{i,N}$, denote the corresponding particle approximation (see \eqref{Eq:interacting particles-v01}).	
	 Then the following pathwise propagation of chaos result holds,
	\begin{align*}
	\lim_{N \rightarrow \infty} \sup_{1 \le i \le N}
	\bE_{\bQ} \left[  \sup_{0 \le t \le T} |X_{t}^{i,N} - X_{t}^{i}|^{2} \right]
	=0 \, .
	\end{align*}
\end{proposition}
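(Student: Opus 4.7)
The plan is to adapt the classical Sznitman-style coupling argument for propagation of chaos to our measure-changed setting. The first, and crucial, observation is that $Z^{i,N}$ and $Z^{i}$ both solve the same linear SDE $\dd Z_t=\dot{h}_t Z_t\,\dd W^{i,\bQ}_t$ with $Z_{0}=1$, driven by the same Brownian motion and with deterministic coefficient $\dot{h}$: pathwise uniqueness therefore gives $Z^{i,N}_t=Z^{i}_t$ for every $t\in[0,T]$. This collapses any coupling between the $Z$-components of the interacting and non-interacting systems, so the proof reduces to controlling the $X$-coordinates, and by exchangeability of the particle indices it suffices to bound $\eta^N_t:=\bE_\bQ[\sup_{s\le t}|X^{1,N}_s-X^{1}_s|^2]$.

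Before estimating this quantity I would record uniform moment bounds: $\sup_{N,i}\bE_\bQ[\sup_{t\le T}(|X^{i,N}_t|^p+|X^{i}_t|^p+|Z^{i}_t|^p)]<\infty$ for every $p\ge 1$, which follow from the Lipschitz-in-space and bounded-Lipschitz-in-third-variable assumptions on $\hat b$, the constant diffusion $\sigma$, and $\dot h\in L^2$. Subtracting the SDEs for $X^{1,N}$ and $X^{1}$, the diffusion terms and the deterministic $\sigma\dot h$ drifts cancel (same Brownian motion drives both), leaving
\begin{align*}
X^{1,N}_t-X^{1}_t=\int_0^t\big[\hat b(s,X^{1,N}_s,m^N_s)-\hat b(s,X^{1}_s,m^\infty_s)\big]\dd s,
\end{align*}
with $m^N_s:=\tfrac1N\sum_j f(X^{j,N}_s)Z^{j}_s$ and $m^\infty_s:=\bE_\bQ[f(X^{1}_s)Z^{1}_s]$ (independent of $i$ by identical distribution). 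Applying the Lipschitz properties of $\hat b$ in its second and third arguments, Cauchy--Schwarz in time, and taking expectations yields
\begin{align*}
\eta^N_t \le C\int_0^t\eta^N_s\,\dd s + C\int_0^t\bE_\bQ\big[|m^N_s-m^\infty_s|^2\big]\,\dd s.
\end{align*}

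I would then split the measure error through the intermediate quantity $\tilde m^N_s:=\tfrac1N\sum_j f(X^{j}_s)Z^{j}_s$. Under $\bQ$ the pairs $(X^{j},Z^{j})$ are i.i.d.\ in $j$, since each depends only on the independent Brownian motion $W^{j,\bQ}$; hence $\tilde m^N_s-m^\infty_s$ is a centred sample mean with $L^2(\bQ)$-size $O(1/\sqrt N)$ using the moment bounds on $f(X^{1})Z^{1}$. The coupling piece $m^N_s-\tilde m^N_s=\tfrac1N\sum_j(f(X^{j,N}_s)-f(X^{j}_s))Z^{j}_s$ is handled via Lipschitz $f$, Cauchy--Schwarz on the sum, and exchangeability across the particle indices, giving
\begin{align*}
\bE_\bQ\big[|m^N_s-\tilde m^N_s|^2\big] \le L^2\,\bE_\bQ\big[|X^{1,N}_s-X^{1}_s|^2(Z^{1}_s)^2\big].
\end{align*}

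The main obstacle is the multiplicative weight $(Z^{1}_s)^2$: since $Z^{1}$ is not bounded, one cannot directly identify the right-hand side with $\eta^N_s$ and close the Grönwall loop in the classical way. I would overcome this by a Hölder/$L^p$-interpolation: combining Hölder with the uniform high-moment bounds $\sup_N\|X^{1,N}_s-X^{1}_s\|_{L^p(\bQ)}<\infty$ (available from the moment step) and the finite moments of $Z^{1}_s$, the weighted cross term is dominated by $C(\eta^N_s)^\beta$ for some $\beta\in(0,1]$. A Bihari-type nonlinear Grönwall inequality, combined with the a priori boundedness of $\eta^N$ and the $O(1/N)$ LLN contribution, then forces $\eta^N_T\to 0$ as $N\to\infty$, which by exchangeability is equivalent to the proposition's claim $\sup_{1\le i\le N}\bE_\bQ[\sup_{t\le T}|X^{i,N}_t-X^{i}_t|^2]\to 0$.
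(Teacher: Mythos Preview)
Your proposal is correct and follows the same architecture as the paper's proof: identify $Z^{i,N}=Z^{i}$, split the empirical-mean error through the i.i.d.\ intermediate $\tilde m^{N}_s=\frac1N\sum_j f(X^{j}_s)Z^{j}_s$, control the LLN piece by $O(1/N)$, and close with a nonlinear (Perov/Bihari) Gr\"onwall argument.

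The one substantive difference is how you handle the coupling term. The paper exploits the \emph{boundedness} of $\hat b$ to write $|\hat b(\cdot)-\hat b(\cdot)|^{2}\le C\,|\hat b(\cdot)-\hat b(\cdot)|$, so that the Lipschitz bound produces the \emph{first} power $\frac1N\sum_j Z^{j}_s|X^{j,N}_s-X^{j}_s|$; a single Cauchy--Schwarz on $\bE_{\bQ}[Z^{j}_s|X^{j,N}_s-X^{j}_s|]$ then delivers $(\eta^{N}_s)^{1/2}$ directly, without ever meeting $(Z^{j}_s)^{2}$. You instead keep the square, arrive at $\bE_{\bQ}[|X^{1,N}_s-X^{1}_s|^{2}(Z^{1}_s)^{2}]$, and extract $(\eta^{N}_s)^{\beta}$ via H\"older against the uniform higher moments of $X^{i,N},X^{i},Z^{i}$. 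Both routes feed the same type of sublinear term into the Gr\"onwall step; the paper's is a bit more economical since it uses the boundedness hypothesis in place of the uniform $L^{p}$ bounds you invoke, while your argument would survive a weakening of that hypothesis to ``Lipschitz'' provided the moment bounds still hold.
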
 
This proposition may be used to analyze the convergence of the Monte
Carlo estimator \eqref{estimatorcomplete}. Indeed, due to the fact that there is no coupling (or law dependency) in $Z_{t}^{i,N}$, $Z^{i,N}= Z^{i}$ and $\hat \theta_h$ can
be represented as follows.
$$
\hat \theta_h = \frac{1}{N}\sum_{i=1}^N Z^{i}_T G(X^{i}_T) + \frac{1}{N}\sum_{i=1}^N Z^{i}_T (G(X^{i,N}_T)
-G(X^{i}_T)
).
$$
The first term above converges to $\theta$ as $N\to \infty$ by the law of
large numbers, and the second term can be shown, e.g., to converge to
zero in probability using Proposition \ref{chaoscomplete} if $G$ is
sufficiently regular.  
\begin{proof}[Proof of Proposition \ref{chaoscomplete}]
	The idea of the proof is to appeal to a Gronwall type inequality, but this is made difficult due to the presence of $Z$ term in \eqref{Eq:Measure changed example}. One can note, due to the assumptions on the coefficients of the SDE, all $p$-moments exist. Using our prescribed form of the MV-SDE we obtain,
	\begin{align*}
	|X_{t}^{i,N} - X_{t}^{i}|^{2}
	\le
	C \int_{0}^{t} \big| \hat{b} \Big(s, X_{s}^{i,N}, \frac{1}{N} \sum_{j=1}^{N} f(X_{s}^{j,N}) Z_{s}^{j} \Big)
	- \hat{b} \left(s, X_{s}^{i}, \bE_{\bQ}[f(X_{s}^{i}) Z_{s}^{i}] \right) \big|^{2} \dd s \, .
	\end{align*}
	Let $s \in [0,T]$, then introduce the terms, $\hat{b} \Big(s,
        X_{s}^{i}, \frac{1}{N} \sum_{j=1}^{N} f(X_{s}^{j,N})Z_{s}^{j}
        \Big)$ and $\hat{b} \Big(s, X_{s}^{i}, \frac{1}{N}
        \sum_{j=1}^{N} f(X_{s}^{j})Z_{s}^{j} \Big)$, where the
        empirical measure in the second term is the one constructed
        from the i.i.d. SDEs in \eqref{Eq:Measure changed example},
        hence each $X^{j}$ corresponds to a independent realisation of
        the MV-SDE, namely it has the correct distribution. Splitting
        the original difference into three, we use the Lipshitz
        property in space for the first one, to obtain,
	\begin{align*}
	\big| \hat{b} \Big(s, X_{s}^{i,N}, \frac{1}{N} \sum_{j=1}^{N} f(X_{s}^{j,N}) Z_{s}^{j} \Big)
	- \hat{b} \Big(s, X_{s}^{i}, \frac{1}{N} \sum_{j=1}^{N} f(X_{s}^{j,N})Z_{s}^{j} \Big) \big|^{2}
	\le
	C|X_{s}^{i,N}-X_{s}^{i}|^{2} \, .
	\end{align*}
	For the second difference we use the fact that $\hat{b}$ is
        bounded along with the Lipschitz property in the third
        variable, which yields
	\begin{align*}
	&\big| \hat{b} \Big(s, X_{s}^{i}, \frac{1}{N} \sum_{j=1}^{N} f(X_{s}^{j,N})Z_{s}^{j} \Big)
	- \hat{b} \Big(s, X_{s}^{i}, \frac{1}{N} \sum_{j=1}^{N} f(X_{s}^{j})Z_{s}^{j} \Big) \big|^{2}
	\\
	&
	\le
	C\big| \hat{b} \Big(s, X_{s}^{i}, \frac{1}{N} \sum_{j=1}^{N} f(X_{s}^{j,N})Z_{s}^{j} \Big)
	- \hat{b} \Big(s, X_{s}^{i}, \frac{1}{N} \sum_{j=1}^{N} f(X_{s}^{j})Z_{s}^{j} \Big) \big|
	\le
		C  \frac{1}{N} \sum_{j=1}^{N} Z_{s}^{j}|X_{s}^{j,N} - X_{s}^{j}| \, .
	\end{align*}
	Finally, again from the Lipschitz property we obtain,
	\begin{align*}
	\big| \hat{b} \Big(s, X_{s}^{i}, \frac{1}{N} \sum_{j=1}^{N} f(X_{s}^{j})Z_{s}^{j} \Big)
	- \hat{b} \left(s, X_{s}^{i}, \bE_{\bQ}[f(X_{s}^{i}) Z_{s}^{i}] \right) \big|^{2}
	\le
	C
	\big|\frac{1}{N} \sum_{j=1}^{N} f(X_{s}^{j}) Z_{s}^{j} 
	- \bE_{\bQ}[f(X_{s}^{i}) Z_{s}^{i}] \big|^{2}	
	\, .
	\end{align*}
	Hence the following bound holds,
	\begin{align*}
	& \bE_{\bQ} \Big[ \sup_{0 \le t \le T} |X_{t}^{i,N} - X_{t}^{i}|^{2} \Big]
	\\
	&
	\le
	C \int_{0}^{T} \bE_{\bQ} [ |X_{s}^{i,N}-X_{s}^{i}|^{2}] 
	+
	 \frac{1}{N} \sum_{j=1}^{N} \bE_{\bQ}\Big[ Z_{s}^{j}|X_{s}^{j,N} - X_{s}^{j}| \Big]
	+
	\bE_{\bQ} \Big[ \big|\frac{1}{N} \sum_{j=1}^{N} f(X_{s}^{j}) Z_{s}^{j} 
	- \bE_{\bQ}[f(X_{s}^{i}) Z_{s}^{i}] \big|^{2} \Big]
	 \dd s \, .
	\end{align*}
	One can use Cauchy-Schwarz along with the properties of $Z$ to obtain,
	\begin{align*}
	\bE_{\bQ}\Big[ Z_{s}^{j}|X_{s}^{j,N} - X_{s}^{j}| \Big]
	\le
	C \bE_{\bQ}\Big[ |X_{s}^{j,N} - X_{s}^{j}|^{2} \Big]^{\frac12}
	\le
	C \bE_{\bQ}\Big[ \sup_{0 \le u \le s} |X_{u}^{j,N} - X_{u}^{j}|^{2} \Big]^{\frac12} \, .
	\end{align*}
	Although at first it appears one cannot use Gronwall here,
        there is a nonlinear generalisation due to Perov (see
        \cite{MitrinovicEtAl2012}*{Theorem 1, p360}) which we can use
        since the nonlinear term on the RHS is square root of the term
        on the left. Finally, take the supremum over $i$ and using the
        fact that the variables $f(X_{s}^{j}) Z_{s}^{j}$ are
        i.i.d. and square integrable, we obtain,
	\begin{align*}
	\sup_{1 \le i \le N}\bE_{\bQ} \Big[ \sup_{0 \le t \le T} |X_{t}^{i,N} - X_{t}^{i}|^{2} \Big]
	&\le
	C e^{C}\int_{0}^{T}
	\bE_{\bQ} \Big[ \big|\frac{1}{N} \sum_{j=1}^{N} f(X_{s}^{j}) Z_{s}^{j} 
	- \bE_{\bQ}[f(X_{s}^{i}) Z_{s}^{i}] \big|^{2} \Big]
	\dd s \, \\
&\leq \frac{ C e^{C}}{N}\int_{0}^{T}\bE_{\bQ} \Big[ \big| f(X_{s}^{1}) Z_{s}^{1} 
	- \bE_{\bQ}[f(X_{s}^{1}) Z_{s}^{1}] \big|^{2} \Big]
	\dd s \,\to 0
	\end{align*}
as $N\to \infty$, which concludes the proof. 	
\end{proof}

\subsubsection*{The Complete Measure Change Algorithm}
We now describe the algorithm for simulating a general MV-SDE under a
complete measure change.
\begin{enumerate}
	\item Simulate the following particle system for the MV-SDE
          after the measure change: 
	\begin{align*}
		\dd X_{t}^{i,N} 
		& = \left( b \left( t, X_{t}^{i,N}, \frac{1}{N} \sum_{j =1}^{N} Z_{t}^{j} \delta_{X_{t}^{j,N}} \right) 
		+ \dot{h}_{t}
		\sigma \left( t,X_{t}^{i,N}, \frac{1}{N} \sum_{j =1}^{N} Z_{t}^{j} \delta_{X_{t}^{j,N}} \right)  \right) \dd t 
		\\
		&
		\qquad \qquad +
		\sigma \left( t,X_{t}^{i,N}, \frac{1}{N} \sum_{j =1}^{N} Z_{t}^{j} \delta_{X_{t}^{j,N}} \right) \dd W_{t}^{i,\bQ}, 
		\\
		 \dd Z_{t}^{i}
		 &= 
		 \dot h_t Z_{t}^{i} \dd W^{i,\bQ}_t, 
		 \quad Z_{0}^{i}=1\, .
	\end{align*}

	\item Compute the importance sampling estimator using the
          following formula:
$$
\hat\theta_h = \frac{1}{N}\sum_{i=1}^N Z^{i,N}_T G(X^{i,N}_T).
$$

\end{enumerate}
We will refer to algorithms of this form as \emph{Complete Measure Change Algorithms}.
An advantage one can immediately see is that one simulates the
particles only once. A key disadvantage is that the importance sampling to estimate the object of interest $\bE[G(X_{T})]$, may yield a poorer estimation of the original law $\mu$ and the term $\bE_\bQ[f(X_t)Z_t]$ in \eqref{eq:non-interacting particles-v01}. We will discuss this in Section \ref{sec:Numerics}.

%
%
%
%
%%%%%%%%%%%%%%%%%%%%%%%%%%%%%%%%%%%%%%%%%%%%%%%%%%%%%%%%%%%%%%%%%%%%
%%%%%%%%%%%%%%%%%%%%%%%%%%%%%%%%%%%%%%%%%%%%%%%%%%%%%%%%%%%%%%%%%%%%
%%%%%%%%%%%%%%%%%%%%%%%%%%%%%%%%%%%%%%%%%%%%%%%%%%%%%%%%%%%%%%%%%%%%
%%% SECTION
%%%%%%%%%%%%%%%%%%%%%%%%%%%%%%%%%%%%%%%%%%%%%%%%%%%%%%%%%%%%%%%%%%%%

\section{Optimal Importance Sampling for McKean-Vlasov SDEs}
\label{Sec:Optimal Importance Sampling}
The previous section detailed algorithms for simulating MV-SDEs under an arbitrary change of measure. We now use the theory of large deviations to determine, in a certain optimal way, a measure change which will reduce the variance of the estimate. 

An important point here is that we will be using the LDP for Brownian
motion, rather than that for MV-SDEs. There are several works dealing with Large Deviations for MV-SDEs and their associated interacting particles systems, see \cite{BudhirajaDupuisFischer2012}, \cite{Fischer2014}, \cite{dosReisSalkeldTugaut2017} but such results are not of use here since we must be able to cheaply simulate the MV-SDE after the change of measure. We restrict to Girsanov measure changes since we know how the SDE changes under the measure change.

In this section we first show how the LDP framework can be applied to both algorithms to yield a simplified optimisation problem for finding the asymptotically optimal measure change (Theorems \ref{Thm:Complete Measure Change} and \ref{Thm:Decoupled}) and then demonstrate how these simplified optimization problems can  be solved in practice.

\subsection{Preliminaries}
\label{Sec:GR08 Results}
We recall some of the main concepts for importance sampling with LDP, see \cite{GuasoniRobertson2008} and \cite{GlassermanEtAl1999} for further discussion.
We denote by $\bW^d_{T}$ the standard $d$-dimensional Wiener space of
continuous functions over the time interval $[0,T]$ which are zero at
time zero and in the one-dimensional case we simply write $\bW_T$
instead of $\bW^1_T$. This space is endowed with the topology of
uniform convergence and with the usual Wiener measure $\bP$, defined
on the completed filtration $\cF_{T}$, which makes the process $\mathbf{W}_{t}(x)=x_{t}$ with $x \in \bW_{T}^{d}$ a standard $d$-dimensional Brownian motion.

The goal is to estimate the expected value of some functional $\tilde{G}:\bW_{T}^{d} \rightarrow \bR_{+}$  continuous in the uniform topology ($\tilde{G}$ is explained later). For the change of measure, one considers a Girsanov transform where the allowed functions are from the Cameron-Martin space of absolutely continuous functions with square integrable derivative, i.e. (if $d=1$ we just write $\bH_T=\bH^1_T$)
\begin{align*}
\bH^d_{T} = \left \{ h:[0,T]\mapsto \bR^d: ~ h_{0}=0 \, , ~ h_{\cdot}=\int_0^\cdot \dot{h}_{t}dt\, , ~
 ~ \int_{0}^{T} |\dot{h}_{t}|^{2} \, \dd t < \infty\ \textrm{\ i.e. \ } \dot h_{t} \in L_{t}^2(\bR^d) \right \}.
\end{align*} 
For any deterministic drift $h \in \bH^d_{T}$, the stochastic exponential defines the Radon-Nikodym derivative for an equivalent measure $\bQ$ namely, ($W^\bP$ is a standard $\bP$-Brownian motion)
\begin{align}
\label{Eq:Deterministic Measure Change}
\frac{\dd \bQ}{\dd \bP}
=
\exp \Big( 
\int_{0}^{T}  \dot{h}_{t} \dd W_{t}^{\bP} - \frac{1}{2} \int_{0}^{T} (\dot{h}_{t})^{2} \, \dd t
\Big).
\end{align}
Under this new measure $\bQ$, the process $W_{\cdot}^{\bQ}= W_{\cdot}^{\bP}-h_{\cdot}$ is a standard $d$-dimensional $\bQ$-Brownian motion.

%%%%%%%%%%%%%%%%%%%%%%%%%%%%%%%%%%%%%%%%%%%%%%%%%%%%%%%%%%%%%%%%%%%%
%%% Complete Measure Change
%%%%%%%%%%%%%%%%%%%%%%%%%%%%%%%%%%%%%%%%%%%%%%%%%%%%%%%%%%%%%%%%%%%%

\subsubsection*{Standing assumptions}

We consider MV-SDEs with nonlinear interaction between the SDE and its
law. In this section we concentrate on one-dimensional SDEs of the form, 
\begin{align}
\label{Eq:More General MV-SDE}
\dd X_{t}= b(t,X_{t}, \mu_{t}) \dd t + \sigma \dd W_{t}, \qquad X_0=x_0.
\end{align}
Throughout this section the following assumptions area assumed to hold (similar to those in Section \ref{Sec:Representation}), for functions  $b:[0,T] \times \bR \times\cP_2(\bR) \to \bR$ and $\sigma>0$ constant.
\begin{assumption}
	\label{Ass:Drift Lipschitz Assumption}
	Assume that $b$ is Lipschitz in the sense that $\exists L>0$ such that $\forall t \in[0,T]$, $\forall x, x'\in \bR$ and $\forall \mu, \mu'\in \cP_2(\bR)$ we have that
	$$
	|b(t, x, \mu)-b(t, x',\mu')| \leq L(|x-x'| + W^{(2)}(\mu, \mu') ).
	$$
	Moreover, $\forall$ $x\in \bR$ and $\mu \in \cP_{2}(\bR)$, $b$ is continuous over the interval $[0,T]$.
\end{assumption}

\begin{assumption}
	\label{Ass:Drift Monotone Assumption}
	
	Assume $b$ satisfies the one-sided Lipschitz growth and local Lipschitz conditions in Assumption \ref{Ass:Monotone Assumption}. Further, $\forall$ $x\in \bR$ and $\mu \in \cP_{2}(\bR)$, let $b$ be continuous in time over the interval $[0,T]$.
\end{assumption}

In view of Section \ref{Sec:Representation}, either of these assumptions yield the existence of a unique strong solution to \eqref{Eq:More General MV-SDE}.
We further use the following assumption for the terminal function $G$. Note that this assumption is on $G$ as a function of the SDE, rather than the driving Brownian motion as is the case in \cite{GuasoniRobertson2008}.
\begin{assumption}
	\label{Ass:General Payoff Growth}
	The functional $G$ is non-negative, continuous and satisfies the following growth condition
	\begin{align*}
	\log(G(x)) \le 
	C_{1} + C_{2} \sup_{t \in [0,T] } |x_{t}|^{\alpha} \, ,
	\end{align*}
	for $x:[0,T]\mapsto \bR$ a continuous function starting at $x_0$ where $C_{1},~C_{2}$ are positive constants and $\alpha \in [1,2)$.
\end{assumption}
%As well as the measure change we want to know if this is in some sense optimal. 
The notion of ``optimality'' for the measure change we use is so-called \emph{asymptotically optimal}, as defined in\footnote{A related but slightly weaker definition of optimality is used in \cite{GuasoniRobertson2008}.} \cite{GlassermanEtAl1999}. Following the approach of \cite{GlassermanEtAl1999}, we want to estimate $\bE[ \exp(\log(G(X))) ]$. Here we perform a measure change for the Brownian motion, so for ease of writing let us define $F(W) := \log(G(X(W)))$ and consider the more general problem of estimating,
\begin{align*}
\alpha( \epsilon) 
:=
\bE[ \exp( F(\sqrt{\epsilon}W)/\epsilon)], \qquad \text{for } \epsilon>0 .
\end{align*}
This is our original problem when $\epsilon=1$, and we can use Varadhan's lemma to understand this quantity as $\epsilon \rightarrow 0$, this is referred to as \emph{small noise asymptotics}. We now consider a general estimator for this quantity $\hat{\alpha}(\epsilon)$ (there is no requirement for $\hat{\alpha}$ to be based on a deterministic measure change). At this point we have no conditions on these estimators so we follow definition \cite{GlassermanEtAl1999}*{Definition 2.1}.
\begin{definition}
	\label{Defn:Asymptotically Unbiased}
	A family of estimators $\{ \hat{\alpha}(\epsilon)\}$ is said to be \emph{asymptotically relatively unbiased} if the following holds,
	\begin{align*}
	\frac{\bE[\hat{\alpha}(\epsilon)] - \alpha(\epsilon)}
	{\alpha(\epsilon)}
	\rightarrow 0
	\quad
	\text{as } \epsilon \rightarrow 0 \, .
	\end{align*}
\end{definition}

The above definition yields estimators that in some sense converge, but we are interested in comparing such estimators and for this we look at their second moment.
\begin{definition}
	\label{Defn:General Asymptotic Optimality}
	A family of relatively unbiased estimators $\{\hat{\alpha}_{0}(\epsilon)\}$ is said to be \emph{asymptotically optimal} if,
	\begin{align*}
	\limsup_{\epsilon \rightarrow 0} \epsilon \log \bE [\hat{\alpha}_{0}(\epsilon)^{2}]
	=
	\inf_{\{\hat{\alpha}(\epsilon)\}}
	\limsup_{\epsilon \rightarrow 0} \epsilon \log \bE [\hat{\alpha}(\epsilon)^{2}],
	\end{align*}
	where the infimum is over all asymptotically relatively unbiased estimators.
\end{definition}

One of the goals of this section will be obtaining conditions when measure changes of type \eqref{Eq:Deterministic Measure Change} are asymptotically optimal. As it turns out, using this definition it is not difficult to obtain a necessary and sufficient condition for asymptotic optimality, a similar argument is given in \cite{GlassermanEtAl1999}*{pg 126}. Let us consider some asymptotic unbiased estimator $\hat{\alpha}$, and define the difference $\Delta (\epsilon) := \bE[ \hat{\alpha}(\epsilon)] - \alpha(\epsilon)$, it is a straightforward consequence of Jensen's inequality and some rearranging,
\begin{align*}
\log( \bE[\hat{\alpha}(\epsilon)^{2}]) 
\ge
2 \log( \bE[\hat{\alpha}(\epsilon)])
=
2 \log( \bE[\alpha (\epsilon)])
+
O(\Delta(\epsilon)/ \alpha(\epsilon)) 
\xrightarrow{\epsilon \rightarrow 0}
2 \log( \bE[\alpha (\epsilon)])
\, .
\end{align*}
Thus we have a lower bound for an estimator, moreover, note that this implies the degenerate estimator $\hat{\alpha}(\epsilon) = \alpha(\epsilon)$ is asymptotically optimal, since $\alpha$ is not random. One can use Varadhan's lemma and Schilder's theorem (see Section \ref{Sec:Proofs Complete Measure Change}) since we are dealing with Brownian motion to obtain,
\begin{align}
\label{Eq:General Asymptotically Optimal Condition}
\limsup_{\epsilon \rightarrow 0} 2 \epsilon \log(\alpha(\epsilon))
=
\sup_{u \in \bH_{T}^{d}} \left\{
2\log(G(X(u))) - \int_{0}^{T} |\dot{u}_{t}|^{2} \dd t
\right\}.
\end{align}
Therefore any estimator which equals the RHS of this expression is asymptotically optimal. Depending on which algorithm we use this will be a slightly different expression but the argument to obtain the bound is the same.

%%%%%%%%%%%%%%%%%%%%%%%%%%%%%%%%%%%%%%%%%%%%%%%%%%%%%%%%%%%%%%%%%%%%
%%% Decoupling
%%%%%%%%%%%%%%%%%%%%%%%%%%%%%%%%%%%%%%%%%%%%%%%%%%%%%%%%%%%%%%%%%%%%
\subsection{The decoupling algorithm}
\label{Sec:Measure Change for Decoupling}

We first consider the decoupling algorithm presented in Section \ref{sec:DecouplingIS}.
We build $\mu_{t}^{N}$, from an independent $N$-particle system which
is simulated under a  numerical scheme, and
then consider the following approximation of SDE\footnote{The measure, $\mu^{N}$ is a random measure, but is independent of the process $\overline{X}$ thus we have decoupled the SDE.} \eqref{Eq:More General MV-SDE},
\begin{align}
\label{Eq:Particle Approx General MV-SDE}
\dd \overline{X}_{t}= b(t,\overline{X}_{t}, \mu_{t}^{N}) \dd t + \sigma\dd W_{t}, \qquad X_0=x_0 \, .
\end{align}
In order to distinguish the current SDE from the previous particle approximation we introduce a so-called copy space (see for example \cite{BuckdahnEtAl2017}) $(\tilde{\Omega}, \tilde{\cF} , (\tilde{\cF}_{t})_{t \ge 0}, \tilde{\bP})$ (with the usual conditions and $\tilde{\cF}_{t}$ is the augmented filtration over the $N$-dimensional Brownian motion). The $N$-system SDEs used to approximate this measure is then defined on this space, hence \eqref{Eq:Particle Approx General MV-SDE} is defined on the product space $(\Omega, \cF, \bP) \otimes (\tilde{\Omega}, \tilde{\cF} , \tilde{\bP})$.

Our aim is now to minimize over $h\in \mathbb H_T$ the variance conditional on the trajectory
of $\mu^N$:
$$
\bE_{\bP \otimes \tilde{\bP}}\big[\, G(\overline X_{T})^{2} \mathcal E_T^{-1}\big|\tilde {\mathcal   F}_{T}\big],
\quad \dd \mathcal E_t = \dot h_t \mathcal E_t \dd W_{t}^{\bP},\quad
\mathcal E_0 = 1,
$$
and we make  use of small noise asymptotics in order to write this variance in a ``LDP'' tractable form, hence we define, for $h\in\bH_T$
\begin{align}
\label{Eq:Small Noise for General MV-SDE}
L(h; \mu^{N})
:=
\limsup_{\epsilon \rightarrow 0}
\epsilon \log \bE_{\bP\otimes \tilde{\bP}} \left[ 
\exp\left( \frac{1}{\epsilon}\Big(2 \log(\overline{G}(\sqrt{\epsilon} W)) - \int_{0}^{T} \sqrt{\epsilon} \dot{h}_{t} \dd W_{t} + \frac{1}{2} \int_{0}^{T} \dot{h}_{t}^{2} \dd t \Big) \right)
\Big |
 \tilde{\cF}_{T}
\right] \, ,
\end{align}
where $\overline{G}(W) := G(\overline{X}(W))$. One should also keep in mind that $\overline{G}$ also depends on $\mu^{N}$, however, we suppress this notation for ease of presentation.

\begin{remark}
	In \eqref{Eq:Small Noise for General MV-SDE}, we have a conditional expectation, thus $L(h; \mu^{N})$ is technically a random variable in $\tilde{\Omega}$. This is not typically the case when using Varadhan's lemma, however, because the random variable is independent of the Brownian motion and $\overline{G}$ is still $\tilde{\bP}$-a.s. continuous w.r.t. the Brownian motion (Section \ref{Sec:Proofs Decoupled}), upon checking the moment condition, we are still able to use Varadhan's lemma, $\tilde{\bP}$-a.s..
\end{remark}

%This leads to the following natural definition of asymptotic optimality.
%\begin{definition}[Asymptotic optimality]
%	\label{Defn:Asymptotically Optimal}
%	Fix an $\tilde{\omega} \in \tilde{\Omega}$ (and hence a
%        $\mu^{N}$). A change of measure with parameter $h \in \bH_{T}$ is said to be \emph{asymptotically optimal w.r.t. $\mu^{N}$} if it is a solution to the problem,
%	\begin{align*}
%	\min_{h \in \bH_{T}} L(h; \mu^{N}) \quad \tilde{\bP} \text{-a.s.} \, .
%	\end{align*}
%\end{definition}
%One of the main advantages of arguing under this LDP framework is that with $\mu^{N}$ fixed
%the optimal change of drift is a deterministic function that is
%relatively cheap to compute and use. The idea of an (optimal)
%stochastic measure change, which is more computationally expensive to
%execute, is explored in \cite{DupuisWang2004}.

%%%%%%%%%%%%%%%%%%%%%%%%%%%
%%%%%%%%%%%%%%%%%%%%%%%%%%%
%%%%%%%%%%%%%%%%%%%%%%%%%%%
%%%%%%%%%
%%%%%%%%% MAIN THEOREM Decoupled
%%%%%%%%%
%%%%%%%%%%%%%%%%%%%%%%%%%%%
%%%%%%%%%%%%%%%%%%%%%%%%%%%
%%%%%%%%%%%%%%%%%%%%%%%%%%%
\begin{theorem}
	\label{Thm:Decoupled}
	Let Assumptions \ref{Ass:General Payoff Growth} and
        \ref{Ass:Drift Monotone Assumption} hold and fix
        $\tilde{\omega} \in \tilde{\Omega}$ (and thus $\mu^{N}$). Furthermore assume  that
        there exists $u \in \bH_{T}$ such that $\overline{G}(u) >0$. Then the following statements hold:
	\begin{itemize}
		\item[i.] Let $h\in\bH_T$ such that $\dot{h}$ is of finite
                  variation. Then Varadhan's lemma holds for the small noise asymptotics, namely we can rewrite \eqref{Eq:Small Noise for General MV-SDE} as,
		\begin{align}
		\label{Eq:Decoupled L in sup form}
		L(h; \mu^{N})=
		\sup_{u \in \bH_{T}} \left\{
		2 \log (\overline{G}(u))
		-
		\int_{0}^{T} \dot h_{t} \dot u_{t} \dd t
		+
		\frac{1}{2} \int_{0}^{T} \dot h_{t}^{2} \dd t 
		-
		\frac{1}{2} \int_{0}^{T} \dot u_{t}^{2} \dd t
		\right\} 
		~ ~ ~ \tilde{\bP}\text{-a.s.} \, .
		\end{align}
		\item[ii.] There exists an $h^* \in \bH_{T}$ which minimizes \eqref{Eq:Decoupled
                    L in sup form}. 
		\item[iii.] Consider a simplified optimization problem
		\begin{align}
		\label{Eq:General Calculus of Variation problem}
		\sup_{u \in \bH_{T}} \left\{
		2 \log(\overline{G}(u)) - 
		\int_{0}^{T} \dot u_t^2  \dd t
		\right\} \, .
		\end{align}
 There exists  a maximizer $h^{**}$ for this problem. If 
		\begin{align}
		\label{Eq:General Asymptotic Optimality check}
		L({h}^{**} ; \mu^{N})=2 \log(\overline{G}({h}^{**})) - 
		\int_{0}^{T} (\dot {h}^{**}_{t} )^{2} \dd t
		\, ,
		\end{align}
	\end{itemize}
then $h^{**}$ defines an asymptotically optimal measure change and is the unique maximizer of \eqref{Eq:General Calculus of Variation problem}.
\end{theorem}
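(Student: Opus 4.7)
The plan is to treat the three statements in order, with Varadhan's lemma as the workhorse for (i) and a convex-analytic argument for (ii)--(iii). Conditionally on $\tilde{\cF}_T$ (so $\mu^N$ is frozen), the MV-SDE \eqref{Eq:Particle Approx General MV-SDE} becomes an ordinary SDE with random but fixed-in-the-argument coefficients. Under Assumption \ref{Ass:Drift Monotone Assumption} the associated solution map $W\mapsto \overline X(W)$ is $\tilde{\bP}$-a.s.\ continuous from $\bW_T$ (uniform topology) to the space of continuous paths (this requires care because $b$ is only locally Lipschitz; I would use the one-sided Lipschitz estimate to obtain continuity in the uniform topology, combined with continuity of the Brownian stochastic integral $\int_0^T\sigma\, \dd W$ which is trivial since $\sigma$ is constant). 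Composed with the continuous $G$, this gives continuity of $\overline G$ in the uniform topology.

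For (i), I would verify the hypotheses of Varadhan's lemma (Lemma \ref{Lem:Varadhan}) applied to $\mu_\epsilon = \bP\circ (\sqrt{\epsilon}W)^{-1}$, which satisfies the LDP on $\bW_T$ with good rate function $I(u)=\tfrac12\int_0^T\dot u_t^2\,\dd t$ for $u\in\bH_T$ (Schilder) and $+\infty$ otherwise. The functional in the exponent is
\begin{align*}
\varphi_\epsilon(W) = 2\log\overline G(\sqrt{\epsilon}W) - \int_0^T\sqrt{\epsilon}\dot h_t\,\dd W_t+\tfrac12\int_0^T\dot h_t^2\,\dd t,
\end{align*}
and here the assumption that $\dot h$ has finite variation is crucial: integration by parts yields $\int_0^T\dot h_t\,\dd W_t = \dot h_TW_T - \int_0^TW_t\,\dd\dot h_t$, which is continuous in $W$ in the uniform topology, so $\varphi_\epsilon/\sqrt{\epsilon}$ evaluated on the path $u$ produces the pairing $\int_0^T\dot h_t\dot u_t\,\dd t$ in the sup formula. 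The moment condition is verified by Fernique's theorem combined with Assumption \ref{Ass:General Payoff Growth} (the subquadratic $\alpha<2$ is precisely what guarantees $\bE[\exp(\gamma C_2\sup_t|\overline X_t|^\alpha/\epsilon)]$ is controlled), together with standard a priori estimates on $\overline X$ obtained from the one-sided Lipschitz condition. The main obstacle here is propagating the growth from $G(\overline X)$ back to the Brownian motion; I would absorb the drift via Gronwall-type arguments on the Itô map, using that $\sigma$ is constant so $\overline X_t-x_0 = \int_0^t b(s,\overline X_s,\mu^N_s)\,\dd s + \sigma W_t$. Assuming $\overline G(u)>0$ for some $u\in\bH_T$ prevents the supremum from being degenerately $-\infty$, which is needed to invoke the extension to Varadhan used in \cite{GuasoniRobertson2008}.

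For (ii), observe from \eqref{Eq:Decoupled L in sup form} that
\begin{align*}
L(h;\mu^N) = \tfrac12\int_0^T\dot h_t^2\,\dd t + \sup_{u\in\bH_T}\Big\{2\log\overline G(u) - \int_0^T\dot h_t\dot u_t\,\dd t - \tfrac12\int_0^T\dot u_t^2\,\dd t\Big\},
\end{align*}
so, as a function of $\dot h\in L^2([0,T])$, $L(\cdot;\mu^N)$ is the sum of a strongly convex quadratic term and a supremum of affine functionals; it is therefore strictly convex, lower semicontinuous (in the weak $L^2$ topology of $\dot h$), and coercive (growing at least like $\tfrac12\|\dot h\|_{L^2}^2$ since the affine piece is bounded below by the value at $u\equiv 0$). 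Standard direct-method arguments then yield the existence of a unique minimizer $h^*\in\bH_T$.

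For (iii), I would first obtain existence of a maximizer $h^{**}$ of \eqref{Eq:General Calculus of Variation problem} by upper semicontinuity of $u\mapsto 2\log\overline G(u)$ in the uniform topology and the fact that sub-level sets $\{I\le \alpha\}$ are compact in $\bW_T$ (Schilder), together with the coercivity provided by Assumption \ref{Ass:General Payoff Growth} with $\alpha<2$. The crux is the completing-the-square identity: for any $h,u\in\bH_T$,
\begin{align*}
-\int_0^T\dot h_t\dot u_t\,\dd t + \tfrac12\int_0^T\dot h_t^2\,\dd t - \tfrac12\int_0^T\dot u_t^2\,\dd t = \tfrac12\int_0^T(\dot h_t-\dot u_t)^2\,\dd t - \int_0^T\dot u_t^2\,\dd t,
\end{align*}
which, plugged into \eqref{Eq:Decoupled L in sup form} with $u=h^{**}$, gives
\begin{align*}
L(h;\mu^N) \ge 2\log\overline G(h^{**}) - \int_0^T(\dot h^{**}_t)^2\,\dd t + \tfrac12\int_0^T(\dot h_t-\dot h^{**}_t)^2\,\dd t
\end{align*}
for every $h\in\bH_T$. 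Hypothesis \eqref{Eq:General Asymptotic Optimality check} asserts that equality holds at $h=h^{**}$, so $h^{**}$ achieves the lower bound and is therefore the (unique, by the strict convexity in (ii)) minimizer of $L(\cdot;\mu^N)$. Asymptotic optimality in the sense of Definition \ref{Defn:General Asymptotic Optimality} then follows because the value $2\log\overline G(h^{**}) - \int_0^T(\dot h^{**}_t)^2\,\dd t$ coincides with the general lower bound \eqref{Eq:General Asymptotically Optimal Condition} derived from Jensen and Schilder. The main subtlety I anticipate is the interplay between the two probability spaces: all the LDP manipulations must be made rigorous $\tilde{\bP}$-a.s.\ (since $\mu^N$ is random), but since $\overline G$ depends on $\mu^N$ only through a fixed trajectory and all bounds can be taken uniform over $\tilde\omega$ in an $\tilde{\bP}$-full measure set using moment estimates on $\mu^N$, this is manageable.
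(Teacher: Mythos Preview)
Your treatment of parts (ii) and (iii) is essentially correct and matches the paper, which defers these to the arguments in \cite{GuasoniRobertson2008}: the strict convexity/coercivity in $h$, the completing-the-square identity, and the comparison with the Jensen/Schilder lower bound \eqref{Eq:General Asymptotically Optimal Condition} are exactly the right ingredients.

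There is, however, a genuine gap in your handling of part (i). You say you would obtain continuity of $W\mapsto \overline X(W)$ ``using the one-sided Lipschitz estimate'' and check the moment condition via ``standard a priori estimates on $\overline X$ obtained from the one-sided Lipschitz condition'' together with Fernique. Neither of these works directly under Assumption \ref{Ass:Drift Monotone Assumption}. The one-sided Lipschitz condition controls $\langle x-x', b(t,x,\mu)-b(t,x',\mu)\rangle$, not $|b(t,x,\mu)-b(t,x',\mu)|$; since you are working pathwise (no It\^o isometry available) with a non-differentiable driving path, you cannot turn this into a Gronwall bound on $|\overline X(W)-\overline X(\tilde W)|$ without first knowing that both solutions stay in a bounded set where the \emph{local} Lipschitz constant of $b$ is finite. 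Likewise, for the integrability check you need a \emph{pathwise} bound of the form $\sup_t|\overline X_t(\sqrt{\epsilon}W)|\le C(1+\sqrt{\epsilon}\sup_t|W_t|)$, and the one-sided Lipschitz condition does not yield this directly (it gives moment bounds under Brownian driving, not deterministic growth bounds in the driver).

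The paper closes both gaps with a single comparison construction (Lemma \ref{Lem:Bounding Xbar}): from the monotonicity of $b$ one builds explicit processes $X^{\pm}$ satisfying $X^-_t\le \overline X_t\le X^+_t$, where $X^{\pm}$ are driven by $\sup_{s\le t}W_s-\inf_{s\le t}W_s$ and grow at most linearly in that quantity. This comparison principle is what (a) provides the a priori bound needed so that the local Lipschitz constant becomes uniform on the relevant region, yielding continuity of the solution map (Lemma \ref{Lem:Continuity of SDE}), and (b) reduces the integrability condition to $\limsup_{\epsilon\to 0}\epsilon\log\bE[\exp(C\epsilon^{\alpha/2-1}\sup_t|W_t|^{\alpha})]<\infty$, which is finite precisely because $\alpha<2$ (Lemma \ref{Lem:UI condition decoupled}). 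Fernique is not the right tool here: it gives integrability of $\exp(\lambda\|W\|_\infty^2)$ only for \emph{small} $\lambda$, whereas you need control of a sub-quadratic exponent with a coefficient blowing up like $\epsilon^{\alpha/2-1}$.
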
 
All of these results are $\tilde{\bP}$-a.s. since the particle system yields a random measure from $\tilde{\Omega}$.
The proof of this theorem requires several auxiliary results which we
defer to Section \ref{Sec:Proofs Decoupled}. One should also note that
the requirement for $\overline{G}>0$ for some $u$ is not restrictive, it is purely
there for technical reasons since one cannot have a maximiser if
$\log(\overline{G}(u))= - \infty$ for all $u \in \bH_{T}$. The
assumption that $\dot h$ has finite variation is necessary to
establish the continuity of the functional in Varadhan's lemma.

\begin{remark}
	[Concavity of $\log(\overline{G})$ and asymptotic optimality]
Consider the problem of minimizing \eqref{Eq:Decoupled L in sup form}
and assume that one can interchange the inf and the sup. Then,
\begin{align*}
\inf_{h \in \bH_{T}}  L(h; \mu^{N})& =
		\sup_{u \in \bH_{T}}\inf_{h \in \bH_{T}}  \left\{
		2 \log (\overline{G}(u))
		-
		\int_{0}^{T} \dot h_{t} \dot u_{t} \dd t
		+
		\frac{1}{2} \int_{0}^{T} \dot h_{t}^{2} \dd t 
		-
		\frac{1}{2} \int_{0}^{T} \dot u_{t}^{2} \dd t
		\right\} \\
&=
		\sup_{u \in \bH_{T}} \left\{
		2 \log (\overline{G}(u))
		-
		\int_{0}^{T} \dot u_{t}^{2} \dd t
		\right\} 
\end{align*}
because the inner problem is solved by $h = u$. Therefore, a
sufficient condition for an asymptotically optimal measure change of type \eqref{Eq:Deterministic Measure Change} is the
exchangeability of inf and sup above. Since $L$ is a convex
        function in $h$, and the integral terms in \eqref{Eq:Decoupled
          L in sup form} are concave in $u$, a sufficient condition
        for such exchangeability is that
        $\log(\overline{G})$ is concave. Indeed, in the case of convex-concave functions we can invoke the minimax principle to swap infimum and supremum, see \cite{EkelandTemam1999}*{pg. 175} for  example.

        In \cite{GuasoniRobertson2008}, the process $X$ was a
        geometric Brownian Motion and the authors were able to explicitly
        link the concavity of $\log(\overline{G})$ with the properties
        of the function $G$. Here the dependence of $\overline{G}$ on
        the Brownian motion is more complex, and it appears to be difficult to
        check concavity.
	Hence, in general one has to check numerically whether
        \eqref{Eq:General Asymptotic Optimality check} holds. However,
        even if \eqref{Eq:General Asymptotic Optimality check} fails,
        one can still  use $h^{**}$ to construct a candidate
        importance sampling measure if this is justified by superior
        numerical performance.

\end{remark}

%%%%%%%%%%%%%%%%%%%%%%%%%%%%%%%%%%%%%%%%%%%%%%%%%%%%%%%%%%%%%%%%%%%%
%%% Complete Measure Change
%%%%%%%%%%%%%%%%%%%%%%%%%%%%%%%%%%%%%%%%%%%%%%%%%%%%%%%%%%%%%%%%%%%%
\subsection{The complete measure change algorithm}

Here we focus on the algorithm discussed in Section
\ref{Sec:Complete Measure Change}. Recall that we are interested in evaluating,
$\bE_{\bP}[G(X)]$. We now change the measure to $\bQ$ and calculate the variance,
\begin{align*}
	\text{Var}_{\bQ} \Big[ G(X) \frac{\dd \bP}{\dd \bQ} \Big]
	=
	\bE_{\bP}\Big[ G(X)^{2} \frac{\dd \bP}{\dd \bQ} \Big]
	-
	\bE_{\bP}\Big[ G(X) \Big]^{2} \, .
\end{align*}
Minimising the variance is equivalent to minimize the first term in
the RHS. As a first step to constructing a tractable proxy for this variance we consider a particle approximation of $X$:
\begin{align}
	\label{Eq:Form of MV-SDE}
	\dd X_{t}^{i,N}
	&=
	b\left( t, X_{t}^{i,N}, \frac{1}{N} \sum_{j=1}^{N} \delta_{X_{t}^{j,N}} \right) \dd t + \sigma \dd W_{t}^{i, \bP} \, , 
	\quad 
	X_{0}^{i,N} = x_0 \, ,
	\\
	\dd \mathcal E^{i}_t & = \dot h_t \mathcal E^{i}_t \dd
	W^{i, \bP}_t,\quad \mathcal E^{i}_0 = 1,
\end{align}
where $W^{i,\bP}$ denotes the driving $\bP$-Brownian motion of particle $i$, and
all $W^{i,\bP}$s are independent of each other. We approximate
$\bE_{\bP}[G^{2}(X) (\cE_T)^{-1}]$ with $\mathbb E_\bP[
G^2(X^{i,N})(\mathcal E_T^{i,N})^{-1}]$. Since $\mathcal
E^{i}=\mathcal E^{i,N}$ (due to the absence of cross dependency), one
can equivalently minimize 
\begin{align}
	\ \mathbb E_\bP\big[\ G^2(X^{i,N})(\mathcal E_T^{i})^{-1}\ \big]
	\, ,\label{prelimit} \ 
	\textrm{ over all $h\in \mathbb H_T$.}
\end{align}
In order to use the LDP theory to minimize \eqref{prelimit}, we define
$\tilde{G}$ as the functional dependent on the underlying $\bP$-Brownian
motions, i.e., for all $i \in \{1, \dots, N\}$,  $\tilde{G}_{i}
:\bW^N_T\mapsto \bR$, where, $\tilde{G}_{i}(W^{1}, \dots, W^{N}):=
G(X^{i,N}(W^{1}, \dots, W^{N}))$. The corresponding small noise
asymptotics takes the following form:
\begin{align}
	\label{Eq:Small Noise for Particles}
	\notag
	\bar{L}(h):=
	\limsup_{\epsilon \rightarrow 0} 
	\epsilon \log \Bigg(
	\bE_{\bP} \Bigg[
	\exp \Bigg(
	\frac{1}{\epsilon} \Big( 
	2 \log & \big(\tilde{G}_i\big(\sqrt{\epsilon}W^{1}, \dots, \sqrt{\epsilon}W^{N}\big)\big)
	\\
	&-
	\int_{0}^{T} \sqrt{\epsilon}  \dot h_{t} \dd W^i_{t} 
	+
	\frac{1}{2} \int_{0}^{T} (\dot h_{t})^{2} \dd t
	\Big)
	\Bigg)
	\Bigg]
	\Bigg) \, ,
	\quad
	h \in \bH_{T}
\end{align}
where we remark that the value of this expression does not depend on
the choice of $i$. 
%We may also define an optimality result here
%\begin{definition}[Asymptotic optimality]
%	\label{Defn:Complete Asymptotically Optimal}
%	A change of measure with parameter $h \in \bH_{T}$ is said to be \emph{asymptotically optimal} if it is a solution to the problem,
%	\begin{align*}
%	\min_{h \in \bH_{T}} \bar{L}(h) \, .
%	\end{align*}
%\end{definition}
 We then obtain the following result for $\bar{L}$ (compare with Theorem \ref{Thm:Decoupled}).

%%%%%%%%%%%%%%%%%%%%%%%%%%%
%%%%%%%%%%%%%%%%%%%%%%%%%%%
%%%%%%%%%%%%%%%%%%%%%%%%%%%
%%%%%%%%%
%%%%%%%%% MAIN THEOREM Complete Measure Change
%%%%%%%%%
%%%%%%%%%%%%%%%%%%%%%%%%%%%
%%%%%%%%%%%%%%%%%%%%%%%%%%%
%%%%%%%%%%%%%%%%%%%%%%%%%%%
\begin{theorem}
	\label{Thm:Complete Measure Change}
	Fix $N\in \bN$ and let Assumptions \ref{Ass:General Payoff
          Growth} and \ref{Ass:Drift Lipschitz Assumption}
        hold. Assume that there exists $(u^{1}, \hat{u}) \in \bH_{T}^{2}$ such that $\tilde{G}_{1}(u^{1}, \hat{u}, \dots, \hat{u})>0$. Then the following statements hold
	\begin{itemize}
		\item[i.] Let $h\in \bH_T$ such that $\dot h$ is of
                  finite variation. Then Varadhan's lemma holds for the small noise asymptotics and we can rewrite \eqref{Eq:Small Noise for Particles} as
	\begin{align}
		\label{Eq:Complete L in sup form}
		\bar{L}(h)=
		\sup_{u \in \bH^N_{T}} \left\{
		2 \log (\tilde{G}_{1}(u^{1}, \dots, u^{N}))
		-
		\int_{0}^{T} \dot h_{t} \dot u_{t}^{1} \dd t
		+
		\frac{1}{2} \int_{0}^{T} (\dot h_{t})^{2} \dd t 
		-
		\frac{1}{2} \int_{0}^{T} |\dot u_{t}|^{2} \dd t
		\right\} \, ,
	\end{align}
	\item[ii.] There exists an $h^* \in \bH_{T}$
          which minimizes \eqref{Eq:Complete L in sup form}. 
	\item[iii.] Consider a simplified optimization problem
	\begin{align}
	\label{Eq:Complete L minimiser}
	\sup_{u^{1} \in \bH_{T}, \hat{u} \in \bH_{T}} \left\{
	2 \log(\tilde{G}_{1}(u^{1}, \hat{u}, \dots, \hat{u}) ) 
	- 
	\int_{0}^{T} (\dot u_{t}^{1})^{2} \dd t
	-
	\frac{N-1}{2} \int_{0}^{T} \dot{\hat{u}}_{t}^{2} \dd t
	\right\} \, .
	\end{align}
There exists a maximizer $(h^{**},u^{**})$ for this problem. If
		\begin{align}
	\label{Eq:Complete Asymptotic Optimality check}
	\bar{L}({h}^{**})
	=	
	2 \log \big(\tilde{G}_{1}({h}^{**}, {u}^{**}, \dots, {u}^{**}) \big) 
	- 
	\int_{0}^{T} (\dot {h}^{**}_{t})^{2} \dd t
	-
	\frac{N-1}{2} \int_{0}^{T} (\dot{u}^{**}_{t})^{2} \dd t
	 \, .
	\end{align}
then $h^{**}$ is asymptotically optimal and is the unique maximizer of
\eqref{Eq:Complete L minimiser}, where we have taken $i=1$ without loss of generality.
	\end{itemize}
\end{theorem}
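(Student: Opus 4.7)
The plan is to mirror Theorem~\ref{Thm:Decoupled}, now working in the product space $\bW_T^N$ of the $N$ independent $\bP$-Brownian motions driving the particle system. For part (i), I would invoke Schilder's theorem to obtain an LDP for the scaled tuple $(\sqrt\epsilon W^1,\ldots,\sqrt\epsilon W^N)$ with good rate $I(u)=\tfrac12\int_0^T |\dot u_t|^2\,\dd t$ on $\bH_T^N$ (and $+\infty$ elsewhere), and then apply the extended Varadhan lemma to
\[
\varphi(W) \,=\, 2\log\tilde G_1(W)\,-\,\int_0^T \dot h_t\,\dd W_t^1 \,+\, \tfrac12\int_0^T \dot h_t^2\,\dd t.
\]
Continuity of $\varphi$ in the uniform topology will follow from Assumption~\ref{Ass:Drift Lipschitz Assumption}, which gives continuous dependence of the particle trajectories and of their empirical measure (in Wasserstein distance) on the drivers, together with integration by parts: the finite variation of $\dot h$ rewrites $\int_0^T\dot h_t\,\dd W_t^1$ as a continuous functional of $W^1$. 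The moment condition is verified from Assumption~\ref{Ass:General Payoff Growth} with $\alpha<2$ combined with standard Gaussian tail bounds. This yields the sup representation \eqref{Eq:Complete L in sup form}.

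For part (ii), I would use the direct method of the calculus of variations. The functional $\bar L$ is strictly convex in $h$ (a pointwise supremum of affine-in-$h$ functionals plus the strictly convex quadratic $\tfrac12\int\dot h^2\,\dd t$), weakly lower semicontinuous on $\bH_T$, and coercive: the nondegeneracy witness $(u^1,\hat u,\ldots,\hat u)$ with $\tilde G_1>0$ yields $\bar L(h)\geq C-\int_0^T \dot h_t\dot u_t^1\,\dd t+\tfrac12\int_0^T \dot h_t^2\,\dd t\to\infty$ as $\|h\|_{\bH_T}\to\infty$ by Cauchy-Schwarz. A minimizer $h^*$ then exists.

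Part (iii) is the main work. I would reduce the $N$-dimensional optimisation to the two-dimensional problem \eqref{Eq:Complete L minimiser} in two steps. First, by the weak minimax inequality and explicit minimisation of the inner quadratic in $h$ inside \eqref{Eq:Complete L in sup form} (achieved at $\dot h=\dot u^1$), one obtains
\[
\inf_{h\in\bH_T}\bar L(h) \,\geq\, \sup_{u\in\bH_T^N}\Bigl\{ 2\log\tilde G_1(u) - \int_0^T(\dot u_t^1)^2\,\dd t - \tfrac12\sum_{j=2}^N \int_0^T(\dot u_t^j)^2\,\dd t\Bigr\}.
\]
Second, since $\tilde G_1(u^1,u^2,\ldots,u^N)$ is invariant under permutations of $(u^2,\ldots,u^N)$ (the empirical measure of the particle system is unchanged by relabelling of the drivers indexed $2,\ldots,N$), restricting to the symmetric diagonal $u^2=\cdots=u^N=\hat u$ yields the further lower bound \eqref{Eq:Complete L minimiser}. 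Existence of a maximiser $(h^{**},u^{**})$ for this simplified problem follows again by the direct method, with coercivity provided by the $-\int(\dot u^1)^2$ and $-\tfrac{N-1}{2}\int\dot{\hat u}^2$ penalties. The hypothesis \eqref{Eq:Complete Asymptotic Optimality check} says exactly that $\bar L(h^{**})$ equals the value of \eqref{Eq:Complete L minimiser}, collapsing every inequality in the chain to an equality; hence $h^{**}$ is a minimiser of $\bar L$ and, by comparison with the universal lower bound \eqref{Eq:General Asymptotically Optimal Condition} for second moments of relatively unbiased estimators, defines an asymptotically optimal measure change. Uniqueness of $h^{**}$ among maximisers of \eqref{Eq:Complete L minimiser} then follows from strict convexity of $\bar L$ in $h$, which forces any other maximiser to coincide with $h^{**}$ as the unique minimiser of $\bar L$.

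The hard part will be Part (iii). The symmetric-diagonal restriction only produces a lower bound on $\inf_h\bar L(h)$, because $\log\tilde G_1$ is not in general concave in the drivers for MV-SDE coefficients and the minimax identity cannot be established in the abstract. Closing the gap is exactly the role of the verification hypothesis \eqref{Eq:Complete Asymptotic Optimality check}, which has to be checked case by case. This parallels the analogous step in Theorem~\ref{Thm:Decoupled} but is strictly more delicate because the interacting particle approximation couples $\tilde G_1$ to all $N$ drivers simultaneously rather than to a single decoupled driver.
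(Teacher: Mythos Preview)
Your proposal tracks the paper's proof closely. For part (i) you invoke exactly the ingredients the paper uses: Schilder's theorem on $\bW_T^N$, continuity of the particle map from the Lipschitz Assumption~\ref{Ass:Drift Lipschitz Assumption} (the paper's Lemma~\ref{Lem:Continuity of Interacting SDE}), integration by parts under the finite-variation hypothesis on $\dot h$, and the moment check from Assumption~\ref{Ass:General Payoff Growth} with $\alpha<2$ (the paper's Lemma~\ref{Lem:Particle Varadhan UI}). Part (ii) is also the same idea---the paper just says ``the $+\dot h^{2}$ term yields existence of a minimiser'' and refers to \cite{GuasoniRobertson2008}, while you spell out the convexity/coercivity underlying that reference.

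The only substantive difference is in the logic of part (iii). You argue via the weak minimax inequality: first $\inf_h \bar L(h)\ge \sup_u\inf_h\{\cdots\}$, then restrict to the symmetric diagonal to reach \eqref{Eq:Complete L minimiser}, and finally use \eqref{Eq:Complete Asymptotic Optimality check} to collapse the chain and conclude that $h^{**}$ minimises $\bar L$. The paper instead anchors directly on the universal second-moment lower bound \eqref{Eq:General Asymptotically Optimal Condition}: it argues that \eqref{Eq:Complete L minimiser} is a lower bound for the $N$-dimensional analogue of \eqref{Eq:General Asymptotically Optimal Condition} (by restricting that supremum to the diagonal $u^{2}=\dots=u^{N}$), so that \eqref{Eq:Complete Asymptotic Optimality check} forces $\bar L(h^{**})$ down to the asymptotically optimal value itself. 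Your route cleanly gives ``$h^{**}$ minimises $\bar L$ over deterministic changes'', but the step you label ``by comparison with the universal lower bound \eqref{Eq:General Asymptotically Optimal Condition}'' is precisely where the paper's direct argument is needed: minimising $\bar L$ over $\bH_T$ does not by itself identify the minimum with the right-hand side of \eqref{Eq:General Asymptotically Optimal Condition}. To close this, you should add the paper's observation that the value of \eqref{Eq:Complete L minimiser} sits below the $N$-dimensional version of \eqref{Eq:General Asymptotically Optimal Condition}, so that \eqref{Eq:Complete Asymptotic Optimality check} pins $\bar L(h^{**})$ to the universal lower bound rather than merely to $\inf_h\bar L(h)$. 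With that adjustment your argument and the paper's coincide; uniqueness via strict convexity is handled identically in both.
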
 
The proof of this theorem is deferred to Section \ref{Sec:Proofs
  Complete Measure Change}. Similarly to the previous discussion if
$\log(\tilde{G}_{1})$ is a concave function in $u \in \bH_{T}^{N}$,
then we know that \eqref{Eq:Complete Asymptotic Optimality check} holds (this is discussed at the end of Section \ref{Sec:Proofs Complete Measure Change}). However, in general \eqref{Eq:Complete Asymptotic Optimality check} is difficult to check since, even with $h^{*}$ fixed, $\bar{L}$ is still an $N$-dimensional optimisation problem, since \eqref{Eq:Complete L in sup form} is supremum over $u \in \bH_{T}^{N}$.

There is also a difficulty in quantifying how the measure change affects the propagation of chaos error i.e. a measure change that is good for the statistical error may be damaging to the propagation of chaos error. We discuss this point further in Section \ref{sec:Numerics}.

%%%%%%%%%%%%%%%%%%%%%%%%%%%%%%%%%%%%%%%%%%
%%%%%%%%%%%%%%%%%%%%%%%%%%%%%%%%%%%%%%%%%%%
%%%%%%%%%%%%%%%%%%%%%%%%%%%%%%%%%%%%%%%%%%
%%%%%%%%%%%%%%%%%%%%%%%%%%%%%%%%%%%%%%%%%%

\subsection{Computing the optimal measure change}

The exponential form of the SDEs (the log-normal class) considered in
\cite{GuasoniRobertson2008} and \cite{Robertson2010} allows the
maximisation to be written in the form of an Euler-Lagrange equation
(calculus of variations approach). Due to the more general
coefficients here, we obtain a more complex interaction between the
Brownian motion and the value of the SDE. Consequently we need to look
towards the more general theory of optimal control to calculate the
change of measure\footnote{Even though we are initially dealing with
  SDEs, in the large deviations asymptotics, the trajectory of the
  Brownian motion becomes a deterministic control.}. Deterministic  optimal control is a large subject area and one can consult \cite{FlemingRishel1975} or \cite{YongZhou1999} for example. We recall that we are working under the $\bP$-measure.

One of the most important results from optimal control is Pontryagin's maximum principle. Roughly speaking, Pontryagin's maximum principle gives a set of differential equations that the optimal control must satisfy. 
Let us recall the main ideas following \cite{YongZhou1999}*{p.102}. We
start with the controlled dynamical system $x(t)$ which takes the
following form:
\begin{align}
\label{Eq:General Control System}
\begin{cases}
\dot{x}(t)=b(t,x(t),u(t)), \quad \text{a.e.} ~ t \in [0,T]
\\
x(0)=x_{0} \, ,
\end{cases}
\end{align}
where $u$ is our ``control'', which is defined in a metric space $(U,d)$ and associated to this we have a \emph{cost functional}
\begin{align}
\label{Eq:General Cost Functional} 
J(u(\cdot)) =
\int_{0}^{T} f(t, x(t), u(t)) \dd t + h(x(T)) \, ,
\end{align}
$f$ is typically referred to as the \emph{running cost} and $h$ the \emph{terminal cost}. We then have the following assumption.
\begin{assumption}
	\label{Ass:Optimal Control Assumption}
	For ease of writing we denote by $\varphi(t,x,u)$ to be any of the functions $b(t,x,u), ~ f(t,x,u)$ or $h(x)$. We then assume the following,
	\begin{itemize}
		\item $(U,d)$ is a separable metric space and $T>0$.
		
		\item The maps $b:[0,T] \times \bR^{n} \times U \rightarrow \bR^{n}$, $f:[0,T] \times \bR^{n} \times U \rightarrow \bR$ and $h: \bR^{n} \rightarrow \bR$ are measurable and there exists a constant $L>0$ and a modulus of continuity $\eta : [0, \infty) \rightarrow [0, \infty)$ such that,
		\begin{align*}
		\begin{cases}
		| \varphi(t,x,u)-\varphi(t, \hat{x}, \hat{u})| \le 
		L|x- \hat{x}| + \eta(d(u, \hat{u}))  
		\quad 
		&\forall t \in [0,T] ~ x,\hat{x} \in \bR^{n}, ~ u,\hat{u} \in U \, ,
		\\
		|\varphi(t,0,u)| \le L 
		& \forall (t,u) \in [0,T] \times U \, .
		\end{cases}
		\end{align*}
		
		\item The maps $b, ~ f$ and $h$ are $C^{1}$ in $x$ and there exists a modulus of continuity $\eta : [0, \infty) \rightarrow [0, \infty)$ such that,
		\begin{align*}
		| \partial_{x} \varphi(t,x,u) - \partial_{x} \varphi(t, \hat{x}, \hat{u})|
		\le
		\eta \Big(|x - \hat{x}| + d(u, \hat{u}) \Big)
		\quad 
		\forall t \in [0,T] ~ x,\hat{x} \in \bR^{n}, ~ u,\hat{u} \in U \, .
		\end{align*}
	\end{itemize}
\end{assumption}
As discussed in \cite{YongZhou1999}*{p.102}, Assumption
\ref{Ass:Optimal Control Assumption} implies that \eqref{Eq:General Control System} admits a unique solution and \eqref{Eq:General Cost Functional} is well defined. Let us denote by $\cU [0,T] := \{ u(\cdot): [0,T] \rightarrow U ~|~ u \text{ is measurable}\}$, then optimal control problem is to find $u^{*} \in \cU[0,T]$ that satisfies,
\begin{align}
\label{Eq:General Optimal Control Problem}
J(u^{*}) = \inf_{u \in \cU[0,T]} J(u) \, .
\end{align}
Such $u^{*}$ is referred to as an \emph{optimal control}, and the corresponding $x^{*}(\cdot):=x(\cdot ; u^{*})$ the \emph{optimal state trajectory}.
We can then state the deterministic version of Pontryagin's maximum principle as \cite{YongZhou1999}*{p.103}.
\begin{theorem}
	\label{Thm:Pontryagin}
	[Pontryagin's Maximum Principle]
	Let Assumption \ref{Ass:Optimal Control Assumption} hold and
        let $(x^{*}, u^{*})$ be the optimal pair to \eqref{Eq:General
          Optimal Control Problem}. Then, there exists a function $p: [0,T] \rightarrow \bR^{n}$ satisfying the following,
	\begin{align}
	\label{Eq:General Adjoint Equation}
	\begin{cases}
	 \dot{p}(t) = - \partial_{x} b(t, x^{*}(t), u^{*}(t))^{\intercal} p(t) + \partial_{x} f(t, x^{*}(t), u^{*}(t)), 
	 \quad \text{a.e.} ~ t \in [0,T]
	 \\
	 p(T)= - \partial_{x} h(x^{*}(T)) \, ,
	\end{cases}
	\end{align}
	and
	\begin{align*}
	H(t, x^{*}(t), u^{*}(t), p(t)) 
	=
	 \max_{u \in U} \{ H(t, x^{*}(t), u, p(t)) \}
	 \quad \text{a.e.} ~ t \in [0,T] \, ,
	\end{align*}
	where $H(t,x,u,p):=	\langle p, b(t,x,u)\rangle - f(t,x,u)$ for any $(t,x,u,p) \in [0,T] \times \bR^{n} \times U \times \bR^{n}$.
	%\begin{align*}
	%H(t,x,u,p):=
	%\langle p, b(t,x,u)\rangle - f(t,x,u)
	%\quad (t,x,u,p) \in [0,T] \times \bR^{n} \times U \times \bR^{n} \, .
	%\end{align*}
\end{theorem}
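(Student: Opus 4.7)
The plan is to prove Pontryagin's Maximum Principle by the classical \emph{needle (spike) variation} method, which is necessary here because $U$ is only a metric space (hence no linear perturbations of $u^*$ are available). The strategy is to perturb $u^*$ on a vanishingly small interval, linearise the state equation around the optimal trajectory, use the adjoint variable to encode the effect of this perturbation on the cost through a single evaluation, and then read off the pointwise maximum condition from the non-negativity of the first variation of $J$.

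First I would fix $\tau \in [0,T)$ a Lebesgue point of $s \mapsto b(s, x^*(s), u^*(s))$ and $s \mapsto f(s, x^*(s), u^*(s))$, fix $v \in U$, and for small $\epsilon > 0$ define the needle variation
\begin{align*}
u_\epsilon(t) = \begin{cases} v, & t \in [\tau, \tau + \epsilon], \\ u^*(t), & \text{otherwise.}\end{cases}
\end{align*}
Let $x_\epsilon$ be the corresponding trajectory. Using Assumption \ref{Ass:Optimal Control Assumption} (Lipschitz in $x$ uniformly in $u$), Gronwall gives $\sup_t |x_\epsilon(t) - x^*(t)| = O(\epsilon)$, and a careful Taylor expansion together with the $C^1$-regularity in $x$ yields $x_\epsilon(t) = x^*(t) + \epsilon\, y(t) + o(\epsilon)$ uniformly on $[0,T]$, where the variational trajectory $y$ solves the linear ODE
\begin{align*}
\dot{y}(t) = \partial_x b(t, x^*(t), u^*(t))\, y(t), \quad t \in (\tau, T],
\end{align*}
with jump initial condition $y(\tau^+) = b(\tau, x^*(\tau), v) - b(\tau, x^*(\tau), u^*(\tau))$ and $y \equiv 0$ on $[0,\tau)$.

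Next I would introduce the adjoint $p$ as the solution of \eqref{Eq:General Adjoint Equation}. A direct differentiation together with the variational ODE shows that
\begin{align*}
\frac{d}{dt}\langle p(t), y(t)\rangle = \langle \partial_x f(t, x^*(t), u^*(t)), y(t)\rangle, \quad t \in (\tau, T],
\end{align*}
so integrating from $\tau$ to $T$ and using $p(T) = -\partial_x h(x^*(T))$ yields
\begin{align*}
-\langle \partial_x h(x^*(T)), y(T)\rangle - \langle p(\tau), y(\tau^+)\rangle = \int_\tau^T \langle \partial_x f(t, x^*(t), u^*(t)), y(t)\rangle\, \mathrm{d}t.
\end{align*}
On the other hand, the optimality $J(u_\epsilon) \geq J(u^*)$, combined with the expansion of $x_\epsilon$ and the Lebesgue-point property of $\tau$, gives after dividing by $\epsilon$ and passing to the limit
\begin{align*}
\big[f(\tau, x^*(\tau), v) - f(\tau, x^*(\tau), u^*(\tau))\big] + \langle \partial_x h(x^*(T)), y(T)\rangle + \int_\tau^T \langle \partial_x f, y\rangle\, \mathrm{d}t \geq 0.
\end{align*}
Substituting the adjoint identity above and the expression for $y(\tau^+)$, the long integral terms cancel and what remains is exactly
\begin{align*}
H(\tau, x^*(\tau), u^*(\tau), p(\tau)) - H(\tau, x^*(\tau), v, p(\tau)) \geq 0,
\end{align*}
valid at every Lebesgue point $\tau$ and every $v \in U$; since the Lebesgue points have full measure and $U$ is separable, the pointwise maximum condition follows a.e.

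The main obstacle in this plan is the uniform first-order expansion $x_\epsilon = x^* + \epsilon y + o(\epsilon)$: because the perturbation is not a smooth variation of the control but a hard switch on a small interval, one must carefully split $[0,T]$ into $[0,\tau)$, $[\tau, \tau+\epsilon]$, and $(\tau+\epsilon, T]$, use the Lebesgue-point property to control the ``spike'' contribution, and apply Gronwall only on the last interval where the drift is $C^1$ in $x$. The modulus-of-continuity hypothesis on $\partial_x b$ and $\partial_x f$ in Assumption \ref{Ass:Optimal Control Assumption} is exactly what one needs to upgrade pointwise Taylor expansions to the uniform $o(\epsilon)$ bound. Separability of $U$ is what then allows a single null set to handle all test values $v$ simultaneously.
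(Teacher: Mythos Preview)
Your proof sketch is correct and is essentially the classical needle-variation argument one finds in \cite{YongZhou1999}. Note, however, that the paper does not actually prove this theorem: it simply states it and cites \cite{YongZhou1999}*{p.103} as the source. So there is no ``paper's own proof'' to compare against; you have supplied a genuine proof where the authors only quote the literature.
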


Typically $p$ is referred to as the \emph{adjoint function} and \eqref{Eq:General Adjoint Equation} the \emph{adjoint equation}, and the function $H$ is called the \emph{Hamiltonian}.

\begin{remark}
	[An alternative approach]
	The maximum principle is not the only way one can use to solve
        this problem. An alternative is by solving the so-called
        Hamilton-Jacobi-Bellman (HJB) equation. This approach is
        typically more difficult since the HJB is a \emph{partial
          differential equation}. 
\end{remark}

\textbf{Maximum principle for Theorems \ref{Thm:Decoupled} and \ref{Thm:Complete Measure Change}.}
The maximum principle allows to translate the simplified optimization
problems of Theorems \ref{Thm:Decoupled} and \ref{Thm:Complete Measure
  Change} into boundary value problems for ODE. One can observe that
we are actually interested in $\dot{u}$ rather than $u$, that is, in
the decoupled case we can write the controlled dynamics as
\begin{align*}
X_{t}(\dot{u})
= x_{0} + \int_{0}^{t} b(s, X_{s}(\dot{u}), \mu_{s}^{N}) \dd s + \int_{0}^{t} \sigma \dot{u}_{s} \dd s \, . 
\end{align*}
The theory above is for infimum while we are interested in supremum, therefore we use the fact that $\sup \{f\} = - \inf \{-f\}$. 

$\triangleright$ \emph{For the decoupling algorithm} Theorem \ref{Thm:Pontryagin} yields the following equations for the adjoint function and trajectory under optimal control $\dot{u}^{*}$ (for a given $\mu^{N}$),
\begin{align}
\label{Eq:Decoupled Maximum Principle}
\text{(Decoupled)} 
\quad
\begin{cases}
\dot{p}_{t} = - \partial_{x} b(t, X_{t}(\dot{u}^{*}), \mu_{t}^{N}) p_{t}
\, ,
\qquad 
&
p_{T}= \dfrac{2 G'(X(\dot{u}^{*}))}{G(X(\dot{u}^{*}))} \, ,
\\
\dot{X}_{t} = b(t, X_{t}, \mu_{t}^{N}) + \frac{1}{2} \sigma^{2} p_{t} \, ,
&
X_{0}=x_{0} \, ,
\end{cases}
\end{align}
that is, the optimal control is related to $p$ through, $\dot{u}^{*}_{t}= \frac{1}{2} \sigma p_{t}$.

$\triangleright$ \emph{For the complete measure change algorithm} the
argument is similar argument to the above one, but here we also need to deal with the measure term. Noting that we have two controls to optimise over (recall Theorem \ref{Thm:Complete Measure Change}) we obtain more complex expressions. Theorem \ref{Thm:Pontryagin} yields the following system of ODEs,
\begin{align}
\label{Eq:Complete Maximum Principle}
\text{(Complete)} 
 ~
\begin{cases}
\dot{p}_{t}^{1} = - \partial_{X^{1}} b(t, X_{t}^{1}, \frac{1}{N}\delta_{X_{t}^{1}} + \frac{N-1}{N}\delta_{\hat{X}_{t}} ) p_{t}^{1}
- \partial_{X^{1}} b(t, \hat{X}_{t}, \frac{1}{N}\delta_{X_{t}^{1}} + \frac{N-1}{N}\delta_{\hat{X}_{t}} ) p_{t}^{2}
\, ,
~
& 
p_{T}^{1}= \frac{2 G'(X^{1})}{G(X^{1})} \, ,
\\
\dot{p}_{t}^{2} = - \partial_{\hat{X}} b(t, X_{t}^{1}, \frac{1}{N}\delta_{X_{t}^{1}} + \frac{N-1}{N}\delta_{\hat{X}_{t}} ) p_{t}^{1}
- \partial_{\hat{X}} b(t, \hat{X}_{t}, \frac{1}{N}\delta_{X_{t}^{1}} + \frac{N-1}{N}\delta_{\hat{X}_{t}} ) p_{t}^{2}
\, ,
~
& 
p_{T}^{2}= 0 \, ,
\\
\dot{X}_{t}^{1} = b(t, X_{t}^{1}, \frac{1}{N}\delta_{X_{t}^{1}} + \frac{N-1}{N}\delta_{\hat{X}_{t}}) + \frac{1}{2} \sigma^{2} p_{t}^{1} \, ,
&
X_{0}^{1}=x_{0} \, ,
\\
\dot{\hat{X}}_{t} = b(t, \hat{X}_{t}, \frac{1}{N}\delta_{X_{t}^{1}} + \frac{N-1}{N}\delta_{\hat{X}_{t}}) +  \frac{1}{2(N-1)} \sigma^{2} p_{t}^{2} \, ,
&
\hat{X}_{0}=x_{0} \, ,
\end{cases}
\end{align}
similarly we obtained, $\dot{u}^{*}_{t}= \frac{1}{2} \sigma p_{t}^{1}$ and $\dot{\hat{u}}^{*}_{t}= 0$ as the optimal controls. From Theorem \ref{Thm:Complete Measure Change} we obtain the measure change as $\dot{h} = \dot{u}$.

The difference between \eqref{Eq:Decoupled Maximum Principle} and \eqref{Eq:Complete Maximum Principle} comes from the fact that for the complete measure change we have a higher dimensional problem. That is, we have two controls and two ``SDEs'' thus we have more terms to optimise. 
Recall, when one wishes to assess asymptotic optimality, \eqref{Eq:Complete L in sup form} is still an $N$-dimensional problem.

\begin{remark}
	[Accuracy of Change of Measure] 
	In \cite{GuasoniRobertson2008}, they were able to obtain
        explicit solutions in certain situations, but here, due to the increase
        in complexity,  we expect this to rarely be the case. We
        therefore need to set reasonable tolerances in checking
        whether asymptotic optimality holds.
\end{remark}

%
%
%
%
%%%%%%%%%%%%%%%%%%%%%%%%%%%%%%%%%%%%%%%%%%%%%%%%%%%%%%%%%%%%%%%%%%%%
%%%%%%%%%%%%%%%%%%%%%%%%%%%%%%%%%%%%%%%%%%%%%%%%%%%%%%%%%%%%%%%%%%%%
%%%%%%%%%%%%%%%%%%%%%%%%%%%%%%%%%%%%%%%%%%%%%%%%%%%%%%%%%%%%%%%%%%%%
%%% SECTION
%%%%%%%%%%%%%%%%%%%%%%%%%%%%%%%%%%%%%%%%%%%%%%%%%%%%%%%%%%%%%%%%%%%%

%
%
%
%
%%%%%%%%%%%%%%%%%%%%%%%%%%%%%%%%%%%%%%%%%%%%%%%%%%%%%%%%%%%%%%%%%%%%
%%%%%%%%%%%%%%%%%%%%%%%%%%%%%%%%%%%%%%%%%%%%%%%%%%%%%%%%%%%%%%%%%%%%
%%%%%%%%%%%%%%%%%%%%%%%%%%%%%%%%%%%%%%%%%%%%%%%%%%%%%%%%%%%%%%%%%%%%
%%% SECTION
%%%%%%%%%%%%%%%%%%%%%%%%%%%%%%%%%%%%%%%%%%%%%%%%%%%%%%%%%%%%%%%%%%%%

\section{Example: Kuramoto model}
\label{sec:Numerics}

The Kuramoto model is a special case of a so-called system of coupled oscillators. Such models are of particular interest in physics and are used to study many different phenomena such as active rotator systems, charge density waves and complex biological systems amongst other things, see \cite{KosturEtAl2002} for further details.
The corresponding SDE for the Kuramoto model is
\begin{align*}
\dd X_{t} = \left( K \int_{\bR}\sin (y - X_{t}) \mu_{t, \bP}^{X}(\dd y) -\sin(X_{t}) \right) \dd t + \sigma \dd W_{t}^\bP \, ,
~ ~ ~ 
t \in [0,T] , ~ ~ X_{0}=x_{0} \, ,
\end{align*}
where $K$ is the coupling strength and $\sigma$ has the physical interpretation of the temperature in the system.
We consider a terminal condition $G(x)= a \exp (bx)$ (satisfying Assumption \ref{Ass:General Payoff Growth}). Our goal is to obtain the asymptotically optimal change of measure that improves the estimation of $\bE_\bP[ G(\bar{X}_{T})]$. 

One can see that such a model easily satisfies the assumptions required in the paper. Let us now apply the theory from the previous section to calculate the optimal change of measure. We should point out here that we do not have the concavity required for asymptotic optimality to hold automatically, therefore we need to check this condition.

By our previous discussion, to apply the decoupling algorithm here we would generate a set of $N$ weakly interacting SDEs which we denote by $Y^{i,N}$ and approximate the original SDE by,
\begin{align*}
\dd \bar{X}_{t} = \left( \frac{K}{N} \sum_{i=1}^{N} \sin (Y_{t}^{i,N} - \bar{X}_{t}) -\sin(\bar{X}_{t}) \right) \dd t + \sigma \dd W_{t}^\bP \, ,
~ ~ ~ 
t \in [0,T] , ~ ~ \bar{X}_{0}=x_{0} \, .
\end{align*}
Let us now apply the theory from the previous section to calculate the optimal change of measure.
Our optimal control argument implies solving $\tilde \bP$-a.s.
\begin{align*}
\text{(Decoupled)} 
\quad
	\begin{cases}
		\dot{p}_{t} = \left( \frac{K}{N} \sum_{i=1}^{N} \cos(Y_{t}^{i,N} - X_{t}) + \cos(X_{t}) \right)  p_{t}
		\, ,
		\qquad 
		&
		p_{T}= 2b \, ,
		\\
		\dot{X}_{t} = \left( \frac{K}{N} \sum_{i=1}^{N} \sin (Y_{t}^{i,N} - X_{t}) -\sin(X_{t}) \right) + \frac{1}{2} \sigma^{2} p_{t} \, ,
		&
		X_{0}=x_{0} \, .
	\end{cases}
\end{align*}

The complete measure change algorithm yields the following system,
\begin{align*}
\text{(Complete)} 
~
\begin{cases}
\dot{p}_{t}^{1} =K \big(\frac{N-1}{N}\cos(\hat{X}_{t} - X_{t}^{1}) - \cos(X_{t}^{1}) \big) p_{t}^{1}
- \frac{K}{N}\cos(X_{t}^{1}- \hat{X}_{t}) p_{t}^{2}
\, ,
~
& 
p_{T}^{1}= 2b \, ,
\\
\dot{p}_{t}^{2} = - K \frac{N-1}{N} \cos(\hat{X}_{t} - X_{t}^{1}) p_{t}^{1}
+ K \big( \frac{1}{N} \cos(X_{t}^{1}- \hat{X}_{t}) + \cos(\hat{X}_{t}) \big) p_{t}^{2}
\, ,
~
& 
p_{T}^{2}= 0 \, ,
\\
\dot{X}_{t}^{1} = K \big( \frac{N-1}{N} \sin (\hat{X}_{t} - X_{t}^{1}) - \sin(X_{t}^{1}) \big) + \frac{1}{2} \sigma^{2} p_{t}^{1} \, ,
&
X_{0}^{1}=x_{0} \, ,
\\
\dot{\hat{X}}_{t} = K \big( \frac{1}{N} \sin(X_{t}^{1} - \hat{X}_{t}) - \sin( \hat{X}_{t}) \big) +  \frac{1}{2(N-1)} \sigma^{2} p_{t}^{2} \, ,
&
\hat{X}_{0}=x_{0} \, ,
\end{cases}
\end{align*}

To show the numerical advantages one can achieve by using importance
sampling we consider how the time taken and the estimate given by the
algorithms change with the number of particles $N$.

%%%%%%%%%%%%%%%%%%%%%%%%%%%%%%%%%%%%%%%%%%%%%%%%%%%%%%%%%%%%%%

 For this example we use, $T=1$, $\bar{X}_{0}=0$, $K=1$, $\sigma=0.3$,
 $a=0.5$ and $b=10$. For the numerics we use an Euler scheme with step size of $\Delta t=0.02$. The systems of equations are solved using MATLAB's \verb|bvp4c| function. For the importance sampling, we use the particle positions from the first Monte Carlo simulation as the empirical law.

\begin{table}[!ht] 
	\centering
	\small
	\begin{tabular}{ c | c |  c | c | c | c | c | c | c | c}
		~ & \multicolumn{3}{c}{Monte Carlo} \vline &  \multicolumn{3}{c}{Decoupled} \vline &  \multicolumn{3}{c}{Complete} \\ \hline
		N & Payoff & Error & Time & Payoff & Error & Time & Payoff & Error & Time
		\\ \hline
		
		$1 \times 10^3$ & 1.5066 & 0.1490 & 3 & 1.5729 & 0.0028 & 9 
		& 1.5419 & 0.0024 & 3 \\ 
		
		$5\times 10^3$ & 1.5895 & 0.0626 & 27 & 1.5840 & 0.0013 & 54
		& 1.5710 & 0.0013 & 28 \\
		
		$1 \times  10^4$ & 1.6813 & 0.0693 & 76 & 1.5728 & 0.0009 & 153
		& 1.5860 & 0.0009 & 75 \\
		
		$5\times 10^4$ & 1.5899 & 0.0200 & 1 025 & 1.5820 & 0.0004 & 2 052
		& 1.5738 & 0.0004 & 1 062  \\
		
		$1 \times  10^5$ & 1.5807 & 0.0176 & 3 433 & 1.5731 & 0.0003 & 6 935 
		& 1.5882 & 0.0003 & 3 644 \\
		\hline
	\end{tabular}
	\caption{Results from standard Monte Carlo and the importance sampling algorithms. Time is measured in seconds and error refers to square root of the variance.}
	\label{Table:Results from Algorithms}
\end{table}
We recall that the decoupling importance sampling requires two runs, here we use the same $N$ for both of these.
The first note one can make is how the time scales when increasing the
number of particles, namely one can truly observe the $N^{2}$
complexity. %\footnote{Even if one is able to optimize the code somewhat,  with this method we cannot escape the extra complexity arising from  the particle interaction.}. 
As expected the decoupling algorithm takes approximately twice as long as the standard Monte Carlo
(computing the change of measure is not time consuming). Following
this point we also observe that the complete measure change has
roughly the same computational complexity as standard Monte Carlo. The
other key point is the reduction in variance (standard error) one
obtains with importance sampling. For this example we see that both
importance sampling schemes reduce the variance by several orders of
magnitude. Further, if one is interested in the decoupling algorithm
it may be more efficient to take less simulations in the second
importance sampled run. Finally, we checked the asymptotic optimality (for the decoupling) numerically and there is only a small difference between the two sides in \eqref{Eq:Complete Asymptotic Optimality check}, we therefore believe we are close to the optimal.
Table \ref{Table:Results from Algorithms} does show that the use of importance sampling in MV-SDEs is both viable and worthwhile.

$\triangleright$ \emph{Estimating the propagation of chaos error.}
As was mentioned in the introduction, theoretically the statistical
error and the propagation of chaos error converge to zero at the same
rate. We now use this example to show that the statistical error dominates. Since the Euler scheme is the same in all examples we can neglect the bias caused by that. We can then decompose the error as
\begin{align*}
\frac{1}{N} \sum_{i=1}^{N} G(\bar{X}^{i,N}) - \bE_{\bP}[G(\bar{X}^{1})]
=
\frac{1}{N} \sum_{i=1}^{N} G(\bar{X}^{i,N}) - \bE_{\bP}[G(\bar{X}^{1,N})]
+
\bE_{\bP}[G(\bar{X}^{1,N})] - \bE_{\bP}[G(\bar{X}^{1})] \, .
\end{align*}
The first difference on the RHS is the statistical error, and the
second one is the propagation of chaos error. It is then clear that if
one considers $M$ realisations of $\frac{1}{N} \sum_{i=1}^{N}
G(\bar{X}^{i,N})$ and takes the average this approximates
$\bE_{\bP}[G(\bar{X}^{1,N})]$ but does not change the propagation of
chaos error. Hence for large $M$ the error reduces to the propagation of chaos error.
To show the propagation of chaos error is negligible compared to the
statistical error here, we repeat the simulation for $N=5 \times 10^{3}$ particles, $M=10^{3}$ times and we obtain an average terminal value of $1.5772$ (with an average standard error of $0.06533$, which agrees with the result in Table \ref{Table:Results from Algorithms}). Comparing this to the $10^{5}$ decoupled entry (which has almost no statistical error) in Table \ref{Table:Results from Algorithms}, we can conclude the propagation of chaos error at least an order of magnitude smaller than the statistical error.

\subsubsection*{Another example: a terminal condition function with steep slope}
Let us consider the terminal condition $G(x)= \big(\tanh(a(x-b))+1
\big)/2$, for $a$ large ($G$ can be understood as a mollified
indicator function). Then $\bE_\bP[G(X_{T})] \approx \bP(X_{T} \ge
b)$. We take the same set up as before but with $a=15$ and $b=1$ and note that the terminal condition for adjoint takes the form,
\begin{align*}
p_{T} = 2a \Big(1- \tanh \big(a \big( X_{T}(\dot{u}^{*}) -b \big) \big) \Big) \, .
\end{align*}
We obtain the following table (we omit the times here since they are similar).

\begin{table}[!ht] 
	\centering
	\small
	\begin{tabular}{ c | c | c | c | c | c | c}
		~ & \multicolumn{2}{c}{Monte Carlo} \vline &  \multicolumn{2}{c}{Decoupled} \vline &  \multicolumn{2}{c}{Complete} \\ \hline
		N & Payoff ($10^{-9}$) & Error ($10^{-9}$) & Payoff ($10^{-9}$) & Error ($10^{-9}$) & Payoff ($10^{-9}$) & Error ($10^{-9}$) \\ \hline
		
		$1 \times 10^3$ & 1.015 & 0.671 & 3.864 & 0.0250 & 8.456 & 0.101 \\ 
		
		$5\times 10^3$ & 1.093 & 0.752  & 3.952 & 0.0112 & 5.564 & 0.0185  \\
		
		$1 \times  10^4$ & 8.829 & 7.071  & 3.910 & 0.0077 & 32.956 & 0.1520 \\
		
		$5\times 10^4$ & 1.106 & 0.271 & 3.970 & 0.0035 & 2.101 & 0.0024  \\
		
		$1 \times  10^5$ & 5.158 & 1.990 & 3.901 & 0.0024 & 16.781 & 0.019 \\
		\hline
	\end{tabular}
	\caption{Results from standard Monte Carlo and the importance
          sampling algorithms. Note that for ease of presentation the payoff and error are all scaled to be $10^{-9}$ of the values presented.}
	\label{Table:Results for Tanh}
\end{table}
The results in Table \ref{Table:Results for Tanh} highlight the key
differences in the algorithms. Clearly this is a difficult problem for
standard Monte Carlo to solve. The reason of course being that although $G$ is mollified it still changes value quickly over a small interval. For example $G$ at $0.25$ is approximately $10^{-10}$, but $G(0.5) \approx 10^{-7}$ and $G(0.75)\approx 10^{-4}$, hence a reasonably small change in the value of the SDE can influence the outcome significantly. However, for the standard Monte Carlo run, only $60$ of the $100,000$ were $>1/2$ at the terminal time and none were above $3/4$. Hence standard Monte Carlo is not giving much information about the most important region of the function.

The importance sampling schemes again give reduced errors,
however, this example highlights the differences between them. Although the complete measure change does have a smaller error than standard Monte Carlo the payoff oscillates around and hence the decoupled algorithm appears to be superior since the payoffs are consistent and the error decreases in the expected manner. 

$\triangleright$ \emph{Robustness of complete measure change.}
The above table shows why one has to consider the effect of the
measure change on the propagation of chaos error. The reason this is
more prominent here than in the previous example is because the
magnitude of the optimal measure change is far larger. Hence, even
when we use a large number of particles they  may provide a poor approximation of the law, this is where this algorithm lacks robustness.

\begin{remark}
	[Requirement for improved simulation]
	It is clear from these examples that combining importance sampling with MV-SDEs can provide a major reduction in the required computational cost, namely we can achieve a smaller variance with far less simulations (and hence time). When using decoupling, unfortunately one has to approximate the law first, which is computationally expensive to do using a particle approximation. Hence, one may look towards more sophisticated simulation techniques to speed up the first run, for example \cite{GobetPagliarani2018} or towards multilevel Monte Carlo such as \cite{SzpruchTanTse2017}. However, with the ability to almost eliminate the variance one should always keep in mind the benefits from importance sampling.
\end{remark}

%
%
%
%
%%%%%%%%%%%%%%%%%%%%%%%%%%%%%%%%%%%%%%%%%%%%%%%%%%%%%%%%%%%%%%%%%%%%
%%%%%%%%%%%%%%%%%%%%%%%%%%%%%%%%%%%%%%%%%%%%%%%%%%%%%%%%%%%%%%%%%%%%
%%%%%%%%%%%%%%%%%%%%%%%%%%%%%%%%%%%%%%%%%%%%%%%%%%%%%%%%%%%%%%%%%%%%
%%% SECTION
%%%%%%%%%%%%%%%%%%%%%%%%%%%%%%%%%%%%%%%%%%%%%%%%%%%%%%%%%%%%%%%%%%%%

\section{Proof of Main Results}
\label{Sec:Proofs}
We now provide the proofs of our two main
theorems. Throughout we work under the $\bP$-measure and we omit it as
a superscript in our Brownian motions. Some arguments align with those
of \cite{GuasoniRobertson2008} and we quote them where appropriate.

\subsection{Proofs for Theorem \ref{Thm:Complete Measure Change}}
\label{Sec:Proofs Complete Measure Change}

Continuity of the SDE w.r.t. Brownian motion is key as it allows to apply directly the contraction principle transferring Schilder's LDP for the Brownian motion to an LDP for the solution of the SDE; otherwise difficulties would arise when using Varadhan's lemma. Unlike the decoupled case, we will stick to Lipschitz coefficients here, the reason for this is that Lemma \ref{Lem:Bounding Xbar} does not generalise well for SDEs of the type \eqref{Eq:Form of MV-SDE}. 
\begin{lemma}
	\label{Lem:Continuity of Interacting SDE}
Fix $N \in \bN$, let Assumption \ref{Ass:Drift Lipschitz Assumption} hold and let $X \in \bS^{p}$ for $p \ge 2$ denote the $N$-dimensional strong solution to the SDE system defined in \eqref{Eq:Form of MV-SDE}. Then $X$ is continuous w.r.t. the set of $N$ Brownian motions in the uniform topology.
\end{lemma}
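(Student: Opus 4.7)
The claim is a deterministic, pathwise statement: I want to show that the Itô map $\bW^d_T\times\cdots\times\bW^d_T \to C([0,T];\bR^N)$ sending $(w^1,\ldots,w^N)$ to the solution $X=(X^{1,N},\ldots,X^{N,N})$ of
\[
X^{i,N}_t = x_0 + \int_0^t b\Bigl(s, X^{i,N}_s, \tfrac{1}{N}\sum_{j=1}^N \delta_{X^{j,N}_s}\Bigr)\,\dd s + \sigma w^i_t,\qquad i=1,\ldots,N,
\]
is continuous in the uniform topology. Since $b$ is Lipschitz and $\sigma$ is constant, for any continuous driver $w$ this integral equation admits a unique continuous solution by a standard Picard argument, so the map is well-defined.

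\textbf{Main step.} Fix two drivers $(w^i)_{i=1}^N$ and $(\tilde w^i)_{i=1}^N$ with corresponding solutions $X^{i,N}$ and $\tilde X^{i,N}$. Subtracting the two integral equations and applying Assumption~\ref{Ass:Drift Lipschitz Assumption} gives, for every $t\in[0,T]$,
\[
|X^{i,N}_t-\tilde X^{i,N}_t|
\le L\int_0^t\!\Bigl(|X^{i,N}_s-\tilde X^{i,N}_s| + W^{(2)}(\mu^N_s,\tilde\mu^N_s)\Bigr)\dd s + \sigma\,|w^i_t-\tilde w^i_t|,
\]
where $\mu^N_s,\tilde\mu^N_s$ are the empirical measures of the two systems. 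The key estimate is that the \emph{synchronous} coupling $\pi=\frac{1}{N}\sum_j\delta_{(X^{j,N}_s,\tilde X^{j,N}_s)}$ has marginals $\mu^N_s$ and $\tilde\mu^N_s$, so
\[
W^{(2)}(\mu^N_s,\tilde\mu^N_s)^2 \le \frac{1}{N}\sum_{j=1}^N|X^{j,N}_s-\tilde X^{j,N}_s|^2
\le \max_{1\le j\le N}|X^{j,N}_s-\tilde X^{j,N}_s|^2.
\]

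\textbf{Gronwall closure.} Set
\[
D_t := \max_{1\le i\le N}\sup_{s\le t}|X^{i,N}_s-\tilde X^{i,N}_s|,\qquad \Delta := \max_{1\le i\le N}\sup_{s\le T}|w^i_s-\tilde w^i_s|.
\]
The bound above, after taking sup over $s\le t$ and maximum over $i$, yields
\[
D_t \le 2L\int_0^t D_s\,\dd s + \sigma\,\Delta,
\]
and Gronwall's inequality gives $D_T\le \sigma\,\Delta\, e^{2LT}$. Since $\Delta\to 0$ whenever $(\tilde w^i)\to(w^i)$ uniformly on $[0,T]$, the map is (Lipschitz-)continuous in the uniform topology, which proves the lemma.

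\textbf{Expected obstacle.} The only subtle point is the measure-dependence: one must observe that the diagonal coupling, which is not in general the Wasserstein-optimal coupling, is admissible and yields a bound by the pathwise maximum over particles. Once this Wasserstein estimate is in place, the rest is standard Gronwall. No expectations are needed since everything is pathwise, which is precisely what is required in order to invoke the contraction principle for transferring Schilder's LDP from the Brownian paths to $X$ in the subsequent arguments.
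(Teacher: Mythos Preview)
Your proof is correct and follows essentially the same route as the paper: Lipschitz bound on $b$, control of the Wasserstein distance via the synchronous coupling of the empirical measures, and a Gronwall closure. The only cosmetic difference is that the paper aggregates with $\sum_{i=1}^N\sup_{s\le t}|\tilde X^{i,N}_s-X^{i,N}_s|$ before applying Gronwall, whereas you use the maximum $D_t=\max_i\sup_{s\le t}|X^{i,N}_s-\tilde X^{i,N}_s|$; both close in the same way and yield the same Lipschitz continuity of the It\^o map.
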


\begin{proof}
	 To show continuity in the uniform topology we consider two sets of iid Brownian motions, $W_{t}=(W_{t}^{1}, \dots, W_{t}^{N})$ and $\tilde{W}_{t}= (\tilde{W}_{t}^{1}, \dots, \tilde{W}_{t}^{N})$ and show continuity by analyzing the difference between, $\tilde{X}_{t}^{i} := X_{t}^{i}(\tilde{W}_{t}^{1}, \dots, \tilde{W}_{t}^{N})$ and $X_{t}^{i}$ with $i \in\{1,\cdots,N\}$. We have,
	\begin{align*}
	| \tilde{X}_{t}^{i,N} - X_{t}^{i,N}|
	\le
	\int_{0}^{t} | b(s, \tilde{X}_{s}^{i,N}, \frac{1}{N} \sum_{j=1}^{N} \delta_{\tilde{X}_{s}^{j,N}}) - b(s, X_{s}^{i,N}, \frac{1}{N} \sum_{j=1}^{N} \delta_{X_{s}^{j,N}}) | \dd s
	+
	| \int_{0}^{t} \sigma \dd \tilde{W}_{s}^{i} - \int_{0}^{t} \sigma \dd W_{s}^{i} | \, .
	\end{align*} 
	Considering the time integral first, we can bound as follows,
	\begin{align*}
	&\Big| b(s, \tilde{X}_{s}^{i,N}, \frac{1}{N} \sum_{j=1}^{N} \delta_{\tilde{X}_{s}^{j,N}}) - b(s, X_{s}^{i,N}, \frac{1}{N} \sum_{j=1}^{N} \delta_{X_{s}^{j,N}}) \Big|
\le
	C \Big( | \tilde{X}_{s}^{i,N} - X_{s}^{i,N}| + \Big( \frac{1}{N} \sum_{j=1}^{N} ( \tilde{X}_{s}^{j,N} - X_{s}^{j,N})^{2} \Big)^{\frac12} \Big) \, ,
	\end{align*}
	where we used the Lipschitz property and the definition of the Wasserstein-$2$ metric for empirical distributions (see \cite{BerntonEtAl2017}, for example). Noting that for the second term,
	\begin{align*}
	\Big( \frac{1}{N} \sum_{j=1}^{N} ( \tilde{X}_{s}^{j,N} - X_{s}^{j,N})^{2} \Big)^{\frac12}
	\le
	\max_{j \in \{1, \dots, N\}} | \tilde{X}_{s}^{j,N} - X_{s}^{j,N}|
	\le
	\sum_{j=1}^{N} | \tilde{X}_{s}^{j,N} - X_{s}^{j,N}| \, .
	\end{align*}
	Hence we can bound the drift by terms of the form $| \tilde{X}_{s}^{j,N} - X_{s}^{j,N}|$.  
This yields the following,
	\begin{align*}
	| \tilde{X}_{t}^{i,N} - X_{t}^{i,N}|
	\le &
	\int_{0}^{t} C \Big( | \tilde{X}_{s}^{i,N} - X_{s}^{i,N}| + \sum_{j=1}^{N} | \tilde{X}_{s}^{j,N} - X_{s}^{j,N}| \Big) \dd s
%	\\
%	 & \qquad 
	 +
	C \sup_{0 \le s \le t} | \tilde{W}_{s}^{i} - W_{s}^{i}| \, .
	\end{align*} 
	Taking supremums and summing over $i$ on both sides yields,
	\begin{align*}
	\sum_{i=1}^{N} \sup_{0 \le t \le T} | \tilde{X}_{t}^{i,N} - X_{t}^{i,N}|
	\le &
	\int_{0}^{T} C \sum_{i=1}^{N} \sup_{0 \le t \le s} | \tilde{X}_{t}^{i,N} - X_{t}^{i,N}| \dd s +
	C \sum_{i=1}^{N} \sup_{0 \le s \le T}  | \tilde{W}_{s}^{i} - W_{s}^{i}|
	\\
	\le &
	C e^{CT} \sum_{i=1}^{N} \sup_{0 \le s \le T}  | \tilde{W}_{s}^{i} - W_{s}^{i}| \, ,
	\end{align*} 
	where the final step follows from Gronwall's inequality.
					It is then clear that $\sum_{i=1}^{N} \sup_{0 \le s \le T}  | \tilde{W}_{s}^{i} - W_{s}^{i}| \rightarrow 0$ implies $\sum_{i=1}^{N} \sup_{0 \le t \le T} | \tilde{X}_{t}^{i,N} - X_{t}^{i,N}| \rightarrow 0$, hence we obtain the required continuity.
\end{proof}

We next show that one can use Varadhan's lemma in this case.
\begin{lemma}
	\label{Lem:Particle Varadhan UI}
	Fix $N\in \bN$, let $h \in \bH_T$ and let Assumptions \ref{Ass:General Payoff Growth} and \ref{Ass:Drift Lipschitz Assumption} hold. 
	
	Then the integrability condition in Varadhan's lemma holds for \eqref{Eq:Small Noise for Particles}. Namely for $\gamma>1$
	\begin{align*}
	&
	\limsup_{\epsilon \rightarrow 0} 
	\epsilon \log \left(
	\bE_{\bP} \left[
	\exp \left(
	\frac{\gamma}{\epsilon} \left(
	2 \log \Big(\tilde{G}_{1}(\sqrt{\epsilon}W^{1}, \dots, \sqrt{\epsilon}W^{N})\Big)
	\int_{0}^{T} \sqrt{\epsilon} \dot{h}_{t}\dd W_{t}^{1}
	+
	\frac{1}{2} \int_{0}^{T} (\dot{h}_{t})^{2} \dd t
	\right)
	\right)
	\right]
	\right)
%	\\
%	&
	< \infty.
	\end{align*}
\end{lemma}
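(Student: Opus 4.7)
My strategy is to peel off the three pieces of the exponent by a Hölder (Cauchy–Schwarz) splitting, then control each piece separately. Write $\Phi_\epsilon := 2\log\tilde G_1(\sqrt\epsilon W^1,\dots,\sqrt\epsilon W^N) - \int_0^T \sqrt\epsilon \dot h_t\,dW^1_t + \tfrac12\int_0^T \dot h_t^2\,dt$. Since the last summand is deterministic it contributes a factor $\exp(\gamma/(2\epsilon)\int_0^T \dot h_t^2 dt)$, whose $\epsilon\log$ is bounded. For the stochastic integral, $\int_0^T \sqrt\epsilon \dot h_t dW_t^1$ is Gaussian with variance $\epsilon\int_0^T\dot h_t^2 dt$, so for any $\lambda>0$ one has $\mathbb{E}[\exp(\lambda \epsilon^{-1}\int_0^T\sqrt\epsilon \dot h_t dW_t^1)]=\exp(\lambda^2\|\dot h\|_{L^2}^2/(2\epsilon))$, whose $\epsilon\log$ is again bounded. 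The entire problem therefore reduces to showing that
\begin{align*}
\limsup_{\epsilon\to 0}\epsilon\log \mathbb{E}_\bP\!\left[\exp\!\Big(\tfrac{C}{\epsilon}\log\tilde G_1(\sqrt\epsilon W^1,\dots,\sqrt\epsilon W^N)\Big)\right]<\infty
\end{align*}
for any constant $C>0$, where the constant absorbs all the Cauchy–Schwarz multiplications.

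To handle the $\log G$ term I invoke Assumption \ref{Ass:General Payoff Growth}, which gives $\log G(x)\le C_1+C_2\sup_{t\in[0,T]}|x_t|^\alpha$ for some $\alpha\in[1,2)$. Denote by $X_t^{i,N,\epsilon}$ the solution of \eqref{Eq:Form of MV-SDE} driven by the rescaled noise $\sqrt\epsilon W^i$. Using the Lipschitz hypothesis on $b$ from Assumption \ref{Ass:Drift Lipschitz Assumption} together with the basic bound $W^{(2)}(\tfrac1N\sum_j\delta_{x^j},\delta_0)\le\max_j|x^j|\le\sum_j|x^j|$, a standard Gronwall argument applied to $\sum_i\sup_{u\le t}|X^{i,N,\epsilon}_u|$ yields
\begin{align*}
\sup_{0\le t\le T}|X^{1,N,\epsilon}_t|\le K_N\Big(1+\sqrt\epsilon\sum_{i=1}^N\sup_{0\le t\le T}|W^i_t|\Big)
\end{align*}
for a deterministic constant $K_N$ depending on $N$, $L$, $T$, $x_0$. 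Substituting this into Assumption \ref{Ass:General Payoff Growth} and using $(a+b)^\alpha\le 2^{\alpha-1}(a^\alpha+b^\alpha)$ gives $\log \tilde G_1(\sqrt\epsilon W^1,\dots,\sqrt\epsilon W^N)\le C'_N(1+\epsilon^{\alpha/2}M^\alpha)$, where $M:=\sum_{i=1}^N\sup_{0\le t\le T}|W^i_t|$.

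Consequently $\frac{C}{\epsilon}\log\tilde G_1 \le \frac{CC'_N}{\epsilon}+CC'_N\,\epsilon^{\alpha/2-1}M^\alpha$, so it remains to control $\mathbb{E}[\exp(\kappa_\epsilon M^\alpha)]$ with $\kappa_\epsilon=CC'_N\epsilon^{\alpha/2-1}$. Here is where the restriction $\alpha<2$ is decisive: by Fernique's theorem $\sup_{t\le T}|W^i_t|$ is sub-Gaussian, so $M$ is sub-Gaussian, and for $\alpha<2$ the variable $M^\alpha$ has finite exponential moments of every order. In fact a direct Young-inequality computation yields $\mathbb{E}[\exp(\kappa M^\alpha)]\le C\exp(C\kappa^{2/(2-\alpha)})$ for all $\kappa>0$, and since $(\tfrac\alpha 2-1)\cdot\tfrac{2}{2-\alpha}=-1$, one gets $\mathbb{E}[\exp(\kappa_\epsilon M^\alpha)]\le C\exp(C''/\epsilon)$ for some $C''$ independent of $\epsilon$. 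Taking $\epsilon\log$ leaves a finite limit, and combining with the contributions from the Girsanov and deterministic pieces concludes the proof.

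\textbf{Where the difficulty lies.} The only delicate step is the last one: making sure the exponential moment of $M^\alpha$ at the large scale $\kappa_\epsilon=O(\epsilon^{\alpha/2-1})$ does not diverge faster than $e^{O(1/\epsilon)}$. This forces the use of the strict inequality $\alpha<2$ in Assumption \ref{Ass:General Payoff Growth} — if $\alpha=2$, one would only obtain finiteness for $\kappa$ below a threshold, and the argument would fail. Everything else (Cauchy–Schwarz splitting, Gaussian moments of Itô integrals with deterministic integrands, and the Gronwall step for the SDE) is routine under Assumption \ref{Ass:Drift Lipschitz Assumption}.
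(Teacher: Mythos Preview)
Your proposal is correct and follows essentially the same route as the paper: a Cauchy--Schwarz split into the deterministic piece, the Gaussian stochastic integral, and the $\log \tilde G_1$ term, followed by a Gronwall bound on the particle system (using Assumption~\ref{Ass:Drift Lipschitz Assumption}) to reduce to an exponential moment of $\epsilon^{\alpha/2-1}\bigl(\sum_i\sup_t|W^i_t|\bigr)^\alpha$. The only cosmetic difference is that the paper uses independence of the $W^i$ to pass to a single $\sup_t|W_t|^\alpha$ and then cites \cite{GuasoniRobertson2008}*{Lemmas 7.6--7.7} for the final exponential estimate, whereas you keep the sum $M$ and argue directly via Young's inequality $\kappa M^\alpha\le \delta M^2 + C_\delta \kappa^{2/(2-\alpha)}$ together with Fernique; both yield the required $e^{O(1/\epsilon)}$ bound and hinge on the same observation that $(\alpha/2-1)\cdot 2/(2-\alpha)=-1$.
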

\begin{proof}	
Using that $h\in \bH_T$  is deterministic, $\dot h \in L^{2}([0,T],\bR^N)$ and Cauchy-Schwarz  we obtain,
	\begin{align*}
	&\epsilon \log \left(
	\bE_{\bP} \left[
	\exp \left(
	\frac{\gamma}{\epsilon} \Big(
	2 \log (\tilde{G}_{1}(\sqrt{\epsilon}W^{1}, \dots, \sqrt{\epsilon}W^{N}))
	-
	\int_{0}^{T} \sqrt{\epsilon} \dot{h}_{t}\dd W_{t}^{1} 
	+
	\frac{1}{2} \int_{0}^{T} (\dot{h}_{t})^{2} \dd t
	\Big)
	\right)
	\right]
	\right)
	\\
	&
	\qquad \le
	\frac{\gamma}{2} \int_{0}^{T} (\dot{h}_{t})^{2} \dd t
	+ 
	\frac{\epsilon}{2} \log \left(
	\bE_{\bP} \left[
	\exp \left(
	\frac{4\gamma}{\epsilon}
	\log (\tilde{G}_{1}(\sqrt{\epsilon}W^{1}, \dots, \sqrt{\epsilon}W^{N}))
	\right)
	\right]
	\right)
	\\
	& \hspace{7cm} +
	\frac{\epsilon}{2} \log \left(
	\bE_{\bP} \left[
	\exp \left(-
	\frac{2\gamma}{\epsilon}
	 \int_{0}^{T} \sqrt{\epsilon} \dot{h}_{t} \dd W_{t}^{1}
	\right)
	\right]
	\right)
	\, .
	\end{align*}
It is then sufficient to show that the three terms are finite when we take $\limsup_{\epsilon \rightarrow 0} $. The first term is clearly finite by the conditions on $h$. 
	Finiteness of the third term follows from \cite{GuasoniRobertson2008}*{pg.16}, namely $\forall ~ i \in \{1, \dots, N\}$ the stochastic integral has the distribution $\int_{0}^{T} \dot h_{t} \dd W_{t}^{i} \sim \cN(0,\int_{0}^{T} \big(\dot h_{t})^{2} \dd t\big)$. Thus we obtain,
\begin{align*}
	\limsup_{\epsilon \rightarrow 0} 
	\frac{\epsilon}{2} \log \left(
	\bE_{\bP} \left[
	\exp \left(-
	\frac{2\gamma}{\epsilon}
	\int_{0}^{T} \sqrt{\epsilon} \dot{h}_{t} \dd W_{t}^{1}
	\right)
	\right]
	\right)
	=
\gamma^{2}
\int_{0}^{T} (\dot h_{t})^{2} \dd t
<
\infty \, .
\end{align*}
%%%%%%%%%%%%%%%%%%%%%%%%%%%%%%%%%%%%%%
%%%%%%%%%%%%%%%%%%%%%%%%%%%%%%%%%%%%%%
%%%%%%%%%%%
%%%%%%%%%%% PAYOFF TERM
%%%%%%%%%%%
%%%%%%%%%%%%%%%%%%%%%%%%%%%%%%%%%%%%%%
%%%%%%%%%%%%%%%%%%%%%%%%%%%%%%%%%%%%%%
The final term to consider is the terminal term, $\log (\tilde{G}_{1})$. By definition of $\tilde{G}_{1}$ and Assumption \ref{Ass:General Payoff Growth} we have,
\begin{align*}
\log \left( \tilde{G}_{1}( \sqrt{\epsilon} W^{1}, \dots, \sqrt{\epsilon} W^{N}) \right)
\le
C_{1} + C_{2} \sup_{0 \le t \le T} |  X^{1,N}( \sqrt{\epsilon} W^{1}, \dots, \sqrt{\epsilon} W^{N})|^{\alpha} \, .
\end{align*}
Applying similar arguments as in Lemma \ref{Lem:Continuity of Interacting SDE} we obtain
\begin{align*}
|X_{t}^{1,N}&( \sqrt{\epsilon} W^{1}, \dots, \sqrt{\epsilon} W^{N})|
\\
&
\le 
C +\int_{0}^{t} C \Big( |X_{s}^{1,N}( \sqrt{\epsilon} W^{1}, \dots, \sqrt{\epsilon} W^{N})| + \sum_{j=1}^{N} | X_{s}^{j,N}( \sqrt{\epsilon} W^{1}, \dots, \sqrt{\epsilon} W^{N})| \Big) \dd s
+C \sqrt{\epsilon} \sup_{0 \le s \le t} | W_{s}^{1}| \, .
\end{align*} 
Noting that for $\alpha \ge 1$ and $a_{i}$ nonnegative,
$(\sum_{i=1}^{N} a_{i})^{\alpha} \le C^{\alpha} \sum_{i=1}^{N}
a_{i}^{\alpha}$ and that the  above estimate is true for any $X^{i,N}$, then taking supremums and summing over $i$ yields,
\begin{align*}
\sum_{i=1}^{N} \sup_{0 \le t \le T} & |X_{t}^{i,N}( \sqrt{\epsilon} W^{1}, \dots, \sqrt{\epsilon} W^{N})|^{\alpha}
\\
&
\le 
C^{\alpha} + \int_{0}^{T} C^{\alpha} \sum_{i=1}^{N} \sup_{0 \le t \le s} | X_{t}^{i,N}( \sqrt{\epsilon} W^{1}, \dots, \sqrt{\epsilon} W^{N})|^{\alpha} \dd s
+C^{\alpha} \sqrt{\epsilon}^{\alpha}
\sum_{i=1}^{N} \sup_{0 \le s \le T}  |W_{s}^{i}|^{\alpha}
\\
&
\leq
C^{\alpha} e^{C^{\alpha}} \sqrt{\epsilon}^{\alpha} 
\sum_{i=1}^{N} \sup_{0 \le s \le T}  |W_{s}^{i}|^{\alpha} \, ,
\end{align*} 
where the final line comes from Gronwall's inequality. It is useful for us to note this yields the bound
\begin{align}
\label{Eq:Tilde G bound}
\log \left( \tilde{G}_{1}( W^{1}, \dots, W^{N}) \right)
\le
C_{1} + C_{2}
\sum_{i=1}^{N} \sup_{0 \le s \le T}  |W_{s}^{i}|^{\alpha} \, .
\end{align}
Using the previous results we have the following bound,
\begin{align*}
&\frac{\epsilon}{2} \log \left(
\bE_{\bP} \left[
\exp \left(
\frac{4\gamma}{\epsilon}
\log (\tilde{G}_{1}(\sqrt{\epsilon}W^{1}, \dots, \sqrt{\epsilon}W^{N}))
\right)
\right]
\right)
\\
& \qquad  \qquad
\le
C + \sum_{i=1}^{N} \frac{\epsilon C}{2} \log \left(
\bE_{\bP} \left[
\exp \left(
\frac{4\gamma C^{\alpha}}{\epsilon^{1- \alpha/2}}
\sup_{0 \le s \le T}  |W_{s}^{i}|^{\alpha}
\right)
\right]
\right) \, ,
\end{align*}
where we have used the independence of the Brownian motions to obtain the sum over $i$.

Finiteness of this term then follows by arguments similar to those in Lemma 7.6 and 7.7 in \cite{GuasoniRobertson2008}. To conclude, we have shown that all terms are finite and the result follows.
\end{proof}

Before finishing the proof of Theorem \ref{Thm:Complete Measure Change}, we note that the LDP for Brownian motion in pathspace is given by Schilder's theorem, which states that for a $d$-dimensional Brownian motion $W$, then $\sqrt{\epsilon}W$ satisfies a LDP with good rate function (see \cite{DemboZeitouni2010}),
\begin{align*}
I(y)=
\begin{cases}
\frac{1}{2} \int_{0}^{T} |\dot{y}_{t}|^{2} \dd t \, , & \text{ if} ~ y \in \bH_{T}^{d} \, ,
\\
\infty, & \text{ if} ~ y \in \bW_{T}^{d} \backslash \bH_{T}^{d} \, .
\end{cases}
\end{align*}

\begin{proof}[Proof of Theorem \ref{Thm:Complete Measure Change}]
The continuity of the SDE from Lemma \ref{Lem:Continuity of Interacting SDE} along with existence of a unique strong solution under Assumptions \ref{Ass:Drift Lipschitz Assumption}, ensure $\tilde{G}_{1}$ is a continuous function under Assumption \ref{Ass:General Payoff Growth}. By assumption, there exists a point $(u^{1}, \hat{u}) \in \bH_{T}^{2}$ such that $\tilde{G}(u^{1}, \hat{u}, \dots, \hat{u})>0$, this along with \eqref{Eq:Tilde G bound} and recalling $\alpha < 2$ we obtain the existence of maximisers by Lemma 7.1 of \cite{GuasoniRobertson2008}. Similarly the $+ \dot{h}^{2}$ yields existence of a minimising $h$ for $\bar{L}$.

Moreover, continuity of $\tilde{G}$ w.r.t. the Brownian motion and finite variation of $\dot{h}$ implies the exponential term in \eqref{Eq:Small Noise for Particles} is continuous. Thus to use Varadhan's lemma we only need to check the integrability condition, which is given in Lemma \ref{Lem:Continuity of Interacting SDE}, hence relation \eqref{Eq:Complete L in sup form} follows.

The remaining part to be proved is that \eqref{Eq:Complete Asymptotic Optimality check} implies asymptotically optimal. This essentially relies on showing that \eqref{Eq:Complete L minimiser} is a lower bound for the RHS of \eqref{Eq:General Asymptotically Optimal Condition}. Using the same arguments to derive \eqref{Eq:General Asymptotically Optimal Condition}, one obtains the following expression for an asymptotically optimal estimator
\begin{align*}
\sup_{u \in \bH^N_{T}} \left\{
2 \log (\tilde{G}_{1}(u^{1}, \dots, u^{N}))
-
\frac{1}{2} \int_{0}^{T} |\dot u_{t}|^{2} \dd t
\right\} 
\, .
\end{align*}
It is then clear that the supremum is bounded below by the case $u^{2}= \dots = u^{N}$, which yields the expression \eqref{Eq:Complete L minimiser}.

	Strict convexity along with arguments on page 18 in \cite{GuasoniRobertson2008} yields the uniqueness which completes the proof.
\end{proof}

\subsection{Proofs for Theorem \ref{Thm:Decoupled}}
\label{Sec:Proofs Decoupled}

We recall, that due to the independence of the original particle system from the SDE in question, we work on the product of two probability spaces, consequently (since $\mu^{N}$ will be a ``realisation'' coming from the space $\tilde{\Omega}$) our results are all $\tilde{\bP}$-a.s..

As before we need to prove that the SDE is a continuous map of the Brownian motions. We were unable to find any results for the one-sided Lipschitz and locally Lipschitz case, we therefore provide a proof of this result here (Lemma \ref{Lem:Continuity of SDE}). The proof of this relies on the following lemma.

\begin{lemma}
	\label{Lem:Bounding Xbar}
Let Assumption \ref{Ass:Drift Monotone Assumption} hold and let $\bar{X}$ be the solution to \eqref{Eq:Particle Approx General MV-SDE}. Then consider the following stochastic processes
\begin{align*}
X_{t}^{+} & :=
x_{0} \1_{\{x_{0} \ge 0 \}} + \int_{0}^{t} C (|X_{s}^{+}| +1) \dd s + \sigma \left( \sup_{0 \le s \le t} W_{s} - \inf_{0 \le s \le t} W_{s} \right),
\\
X_{t}^{-} & :=
x_{0} \1_{\{x_{0} \le 0 \}} - \int_{0}^{t} C (|X_{s}^{-}| +1) \dd s + \sigma \left( \inf_{0 \le s \le t} W_{s} -  \sup_{0 \le s \le t} W_{s} \right) \, ,
\end{align*}
where $C$ is the constant in the monotone condition of $b$. 

Then, $\forall ~ t \ge 0$, $X_{t}^{-} \le \bar{X}_{t} \le X_{t}^{+}$, $\bP\otimes \tilde{\bP}$-a.s..
\end{lemma}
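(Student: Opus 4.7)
The plan is to work pathwise on a $\tilde\bP$-full-measure set (fixing the empirical measure $\mu^N$) and to establish both inequalities $X_t^-\le\bar X_t\le X_t^+$ \emph{simultaneously} by a first-crossing contradiction on the absolutely continuous parts of the difference processes. Proving the two bounds jointly is essential, since at a candidate crossing of $\bar X$ and $X^+$ with $\bar X<0$ the one-sided Lipschitz condition yields an inequality of the wrong sign and one must fall back on the polynomial growth bound (Assumption~\ref{Ass:Monotone Assumption}(2)), which can only be usefully exploited once $\bar X$ is already known to lie above $X^-$.

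Set $M_t^+:=\sup_{s\le t}W_s\ge W_t$ and $M_t^-:=\inf_{s\le t}W_s\le 0$, so that $\sigma W_t\le\sigma(M_t^+-M_t^-)$ and $\sigma W_t\ge\sigma(M_t^--M_t^+)$ pathwise. Writing $D_t^+:=X_t^+-\bar X_t=A_t^++N_t^+$ with $A_t^+:=(x_0^+-x_0)+\int_0^t\bigl[C(X_s^++1)-b(s,\bar X_s,\mu_s^N)\bigr]\dd s$ and $N_t^+:=\sigma[(M_t^+-M_t^-)-W_t]\ge 0$, and analogously $D_t^-:=\bar X_t-X_t^-=A_t^-+N_t^-$ with a nonnegative $N_t^-$, it suffices to prove that both AC parts $A^{\pm}$ remain nonnegative, as $D^{\pm}\ge A^{\pm}$ then follows from the nonnegativity of $N^{\pm}$. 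Note also that $X_t^+\ge 0$ and $X_t^-\le 0$ for every $t$.

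The first-crossing step proceeds as follows. Define $\tau:=\inf\{t\in[0,T]:\min(A_t^+,A_t^-)<0\}$ and suppose $\tau<T$. By continuity and $A_0^{\pm}\ge 0$, at least one of $A_\tau^+, A_\tau^-$ vanishes; by symmetry assume $A_\tau^+=0$. The companion $A_\tau^-\ge 0$ then gives $\bar X_\tau\ge X_\tau^-$, so $|\bar X_\tau|\le\max(|X_\tau^+|,|X_\tau^-|)$, a quantity bounded pathwise. I would then bound the integrand of $A^+$ at $\tau$ in two cases: if $\bar X_\tau\ge 0$, Assumption~\ref{Ass:Monotone Assumption}(1) at $x'=0$ yields $b(\tau,\bar X_\tau,\mu_\tau^N)\le L\bar X_\tau+|b(\tau,0,\mu_\tau^N)|\le LX_\tau^++B(\tilde\omega)$; if $\bar X_\tau<0$, Assumption~\ref{Ass:Monotone Assumption}(2) combined with $|\bar X_\tau|\le|X_\tau^-|$ yields $b(\tau,\bar X_\tau,\mu_\tau^N)\le B(\tilde\omega)+L(1+|X_\tau^-|^{q})|X_\tau^-|$. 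Here $B(\tilde\omega):=\sup_{s\in[0,T]}|b(s,0,\mu_s^N)|$ is $\tilde\bP$-a.s.~finite by Lipschitz-in-measure of $b$ and the $\bS^{m}$ moment bounds on the particle system (Theorem~\ref{Thm:MV Monotone Existence}). In either case, inflating $C$ so as to dominate $L$, $B(\tilde\omega)$ and the pathwise polynomial-growth contribution ensures $C(X_\tau^++1)-b(\tau,\bar X_\tau,\mu_\tau^N)\ge 0$, contradicting $\dot A_\tau^+\le 0$. The symmetric argument handles the case $A_\tau^-=0$.

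The main obstacle is that the noise difference $N_t^+=\sigma[(M_t^+-M_t^-)-W_t]$, though pointwise nonnegative, is \emph{not} monotone in $t$ (a retreat of $W$ from a running maximum shrinks $M_t^+-W_t$), so a local comparison argument directly on $D_t^+$ cannot be closed at the first-crossing time. The remedy is to reduce entirely to the AC parts $A^{\pm}$, at which point the first-decrease criterion only requires sign control of the integrand. A secondary subtlety is that the statement's ``$C$ is the constant in the monotone condition'' must be read as ``$C$ is any constant exceeding $L$, $B(\tilde\omega)$, and the polynomial-growth contribution on the pathwise-bounded region $\{|y|\le\max(|X^+|,|X^-|)\}_{\infty,[0,T]}$''; this inflation is harmless because the whole conclusion is asserted only $\tilde\bP$-a.s.
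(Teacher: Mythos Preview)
Your decomposition $D^{\pm}=A^{\pm}+N^{\pm}$ and the reduction to the absolutely continuous parts is a reasonable strategy, but the argument has a genuine gap in the step where you handle $\bar X_\tau<0$. There you bound $b(\tau,\bar X_\tau,\mu_\tau^N)\le B(\tilde\omega)+L(1+|X_\tau^-|^{q})|X_\tau^-|$ via the locally Lipschitz condition and then propose to ``inflate $C$'' so that $C(X_\tau^+ +1)$ dominates this. But $X^{\pm}$ are themselves defined in terms of $C$: Gronwall gives $|X^-_t|\le(|x_0|+CT+\sigma R)e^{CT}$ with $R=\sup W-\inf W$, so the polynomial-growth term you must dominate is of order $(CTe^{CT})^{q+1}$. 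No finite $C$ can satisfy $C\ge B(\tilde\omega)+L\cdot(CTe^{CT})^{q+1}$, and the circularity you flag as a ``secondary subtlety'' is in fact fatal. Your simultaneous lower bound $|\bar X_\tau|\le|X_\tau^-|$ is correct, but it buys nothing because $|X_\tau^-|$ is not controlled independently of $C$.

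The paper's proof avoids this entirely by a different device, and in particular never invokes the locally Lipschitz (polynomial-growth) part of the assumption. Given a hypothetical crossing time $t_2$ with $\bar X_{t_2}>X_{t_2}^+$, the paper introduces $t_1:=\max\{t\le t_2:\bar X_t=0\}$ (taking $t_1=0$ if $\bar X\ge0$ on $[0,t_2]$) and compares $\bar X$ with $X^+$ only from $t_1$ onwards, where $\bar X_s\ge0$ by construction. On that interval the one-sided Lipschitz condition alone gives the upper bound $b(s,\bar X_s,\mu_s^N)\le C(\bar X_s+1)$, so the case $\bar X<0$ never arises. Consequently $C$ need only dominate $L$ and $\sup_{s}|b(s,0,\mu_s^N)|$, which is a non-circular requirement. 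The upper and lower bounds are thus proved independently; the joint first-crossing machinery you set up, and with it the polynomial-growth bound, is unnecessary.
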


\begin{proof}
Firstly, one can easily show through a standard Picard iteration argument that both $X^{\pm}$ have unique, progressively measurable solutions in $\bS^{2}$.
We argue by contradiction and show the upper bound $\bar{X} \le X^{+}$, the lower bound follows by the same argument in the opposite direction.
Since $b$ is monotone (Assumption \ref{Ass:Drift Monotone Assumption}), we can derive the following bounds $\forall ~ s \in [0,T]$ and $\mu \in \cP_{2}(\bR)$,
\begin{align*}
b(s,x,\mu) \le C(|x|+1) \quad \text{for  } x \ge 0 
\qquad
\text{and}
\qquad
b(s,x,\mu) \ge -C(|x|+1) \quad \text{for  } x \le 0 \, .
\end{align*} 
Assume that there exists a time $t_{2}$ such that $\bar{X}_{t_{2}} >
X_{t_{2}}^{+}$. If $\bar{X}_{t} \ge 0$ for all $t \in [0,t_2]$, then,
\begin{align*}
X_{t_2}^{+}-\bar{X}_{t_2} =
x_{0} \1_{\{x_{0} \ge 0 \}} -x_{0}+ \int_{0}^{t_2} C (|X_{s}^{+}| +1) -b(s,\bar{X}_{s}, \mu_{s}^{N}) \dd s + \sigma \left( \sup_{0 \le s \le t_2} W_{s} - \inf_{0 \le s \le t_2} W_{s} \right) - \sigma W_{t_2} \geq0,
\end{align*}
which yields a contradiction. Alternatively, let $t_1:=\max\{t\leq
t_2: \bar X_t = 0\}$. By continuity, $\bar{X}_{t_{1}}=0$ and so
\begin{align*}
X_{t_{2}}^{+}-\bar{X}_{t_{2}}
 =
x_{0} \1_{\{x_{0} \ge 0 \}}+ & \int_{0}^{t_{2}} C (|X_{s}^{+}| +1) \dd s  -  \int_{t_{1}}^{t_{2}} b(s,\bar{X}_{s}, \mu_{s}^{N}) \dd s
\\
&
 + \sigma \left( \sup_{0 \le s \le t_{2}} W_{s} - \inf_{0 \le s \le t_{2}} W_{s} \right) - \sigma \left( W_{t_{2}}- W_{t_{1}} \right)\geq0,
\end{align*}
which contradicts $\bar{X}_{t_{2}} > X_{t_{2}}^{+}$ and thus proves the result.

\end{proof}

One can now use this lemma to prove the following result.
\begin{lemma}
	\label{Lem:Continuity of SDE}
	Let $\bar{X}$ be defined as in \eqref{Eq:Particle Approx General MV-SDE}, with coefficients satisfying Assumption \ref{Ass:Drift Monotone Assumption}, then $\bar{X}$ is a $\bP\otimes \tilde{\bP}$-a.s. continuous map of Brownian motion in the uniform norm.
\end{lemma}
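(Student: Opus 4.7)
The plan is to exploit the pathwise enclosure provided by Lemma~\ref{Lem:Bounding Xbar} to confine $\bar X$ to a compact set depending continuously on the driving Brownian path, and then, on this compact set, turn the local Lipschitz with polynomial growth property into a genuine (path-dependent) Lipschitz bound so that a standard Gronwall closure becomes available. The whole argument is performed pathwise on $\Omega$ for a fixed $\tilde\omega\in\tilde\Omega$ (equivalently, a fixed realisation of the empirical measure $\mu^{N}$), which yields the statement $\tilde{\bP}$-a.s.

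First, fix $\tilde\omega$ and the corresponding trajectory $s\mapsto \mu^N_s$. Let $W$ and $\tilde W$ be two Brownian paths and let $\bar X$, $\tilde{\bar X}$ be the associated solutions of \eqref{Eq:Particle Approx General MV-SDE}. By Lemma~\ref{Lem:Bounding Xbar}, $X^-(W)_t\le \bar X_t\le X^+(W)_t$ for all $t\in[0,T]$ and similarly for $\tilde W$. Since the bounding processes $X^{\pm}$ solve linear ODEs driven only by $\sup_{s\le t}W_s-\inf_{s\le t}W_s$, they are continuous functionals of the Brownian path in uniform norm. Consequently, for $\tilde W$ in a uniform-norm ball around $W$ one can choose a finite $R=R(W)$ with $\|\bar X\|_\infty\vee\|\tilde{\bar X}\|_\infty\le R$. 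On the compact set $\{|x|\le R\}$, the locally Lipschitz with polynomial growth part of Assumption~\ref{Ass:Drift Monotone Assumption} yields
\begin{align*}
|b(s,x,\mu^N_s)-b(s,y,\mu^N_s)|\le L_R|x-y|,\qquad L_R:=L(1+2R^q),
\end{align*}
uniformly in $s\in[0,T]$, since the Lipschitz constant in the local condition does not depend on the measure argument.

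Second, subtracting the integral equations for $\bar X$ and $\tilde{\bar X}$ and taking absolute values,
\begin{align*}
|\bar X_t-\tilde{\bar X}_t|\le \int_0^t L_R\,|\bar X_s-\tilde{\bar X}_s|\,\dd s+\sigma\|W-\tilde W\|_\infty,
\end{align*}
so Gronwall's inequality gives
\begin{align*}
\sup_{0\le t\le T}|\bar X_t-\tilde{\bar X}_t|\le \sigma\, e^{L_R T}\,\|W-\tilde W\|_\infty,
\end{align*}
which establishes continuity of $W\mapsto \bar X(W)$ at $W$ in the uniform topology, and hence $\bP\otimes\tilde\bP$-a.s.\ continuity after varying $\tilde\omega$.

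The main obstacle is the absence of a global Lipschitz constant for $b$: a direct Gronwall estimate on $|\bar X-\tilde{\bar X}|$ would produce a constant depending on $\sup_{s}|\bar X_s|^q$, which need not be finite uniformly as $\tilde W$ varies. Lemma~\ref{Lem:Bounding Xbar} is precisely what circumvents this, because it upgrades the one-sided Lipschitz growth into a pathwise uniform bound on $\bar X$ that depends continuously on the Brownian path. Once this bound is in hand, the local Lipschitz constant $L_R$ is finite and deterministic along a small uniform-norm neighbourhood of $W$, and the rest is routine.
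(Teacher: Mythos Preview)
Your proof is correct and follows essentially the same route as the paper: use Lemma~\ref{Lem:Bounding Xbar} to confine both solutions to a compact set with a bound depending continuously on the driving path, invoke the local Lipschitz (polynomial growth) condition on that compact set to obtain a path-dependent Lipschitz constant, and close with Gronwall. Your presentation is in fact slightly tidier in that you make explicit the choice of a neighbourhood of $W$ on which a single $R$ works, whereas the paper phrases this more informally via ``we may assume that all Brownian motions are uniformly bounded''.
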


\begin{proof}
To prove this result we require that, if 
$\sup_{0 \le s \le t} |\tilde{W}_{s}-W_{s}| \rightarrow 0$, 
then $\sup_{0 \le s \le t} |\bar{X}_{s}(\tilde{W})-\bar{X}_{s}(W)| \rightarrow 0$. 
We note that we work on the uniform topology and hence we may assume
that all (a finite number of) Brownian motions are uniformly bounded on $[0,T]$.
Lemma \ref{Lem:Bounding Xbar}, implies that we can bound the value $\bar{X}$ takes by the processes $X_{\cdot}^{\pm}$. It is a straightforward application of Gronwall's Lemma to deduce,
	\begin{align*}
	X_{t}^{+} & \le
	\Big(x_{0} \1_{\{x_{0} \ge 0 \}} + Ct +\sigma \Big( \sup_{0 \le s \le t} W_{s} - \inf_{0 \le s \le t} W_{s} \Big) \Big) e^{Ct} \, ,
	\\
	X_{t}^{-} & \ge
	-\Big(|x_{0} \1_{\{x_{0} \le 0 \}}| + Ct +\sigma \Big | \inf_{0 \le s \le t} W_{s} -\sup_{0 \le s \le t} W_{s} \Big | \Big) e^{Ct} \, .
	\end{align*}
	Hence we can bound the value $\bar{X}$ can take as a function of its Brownian motion (which itself is bounded by the uniform topology). Let us now consider the difference in the SDEs driven by the different Brownian motions,
	\begin{align*}
	| \bar{X}_{t}(\tilde{W})- \bar{X}_{t}(W) |
	\le
	\int_{0}^{t} | b(s, \bar{X}_{s}(\tilde{W}), \mu_{s}^{N}) - b(s, \bar{X}_{s}(W), \mu_{s}^{N})| \dd s + \sigma | \tilde{W}_{t} - W_{t}| \, .
	\end{align*} 
	By Assumption \ref{Ass:Drift Monotone Assumption}, $b$ is locally Lipschitz, hence,
	\begin{align*}
	| b(s, \bar{X}_{s}(\tilde{W}), \mu_{s}^{N}) - b(s, \bar{X}_{s}(W), \mu_{s}^{N})|
	\le
	C(\tilde{W},W) |\bar{X}_{s}(\tilde{W})- \bar{X}_{s}(W) | \, .
	\end{align*}
 Noting further that $\sigma | \tilde{W}_{t} - W_{t}| \le \sigma \sup_{0 \le s \le t} | \tilde{W}_{s} - W_{s}| $, then by Gronwall's inequality we obtain,
	\begin{align*}
	| \bar{X}_{t}(\tilde{W})- \bar{X}_{t}(W) |
	\le
	\sigma \Big( \sup_{0 \le s \le t} | \tilde{W}_{s} - W_{s}| \Big)
	e^{C(\tilde{W}, W)t}\, .
	\end{align*} 
	Again, by the uniform topology, we must have $\tilde{W}$ and $W$ bounded, thus $C(\tilde{W},W) < \infty$ and hence, $\sup_{0 \le s \le t} |\bar{X}_{s}(\tilde{W})-\bar{X}_{s}(W)| \rightarrow 0$ when $\sup_{0 \le s \le t} |\tilde{W}_{s}-W_{s}| \rightarrow 0$.	
	
\end{proof}

We now prove that the uniform integrability condition still holds, namely that we can still apply Varadhan's Lemma, in both settings.

\begin{lemma}
	\label{Lem:UI condition decoupled}
	Let $h\in \bH_T$, then under Assumption \ref{Ass:General Payoff Growth} and \ref{Ass:Drift Monotone Assumption} the integrability condition in Varadhan's lemma holds for \eqref{Eq:Small Noise for General MV-SDE}. Namely, for some $\gamma >1$
	\begin{align*}
	\limsup_{\epsilon \rightarrow 0}
	\epsilon \log \bE_{\bP \otimes \tilde{\bP}} \left[ 
	\exp\left( \frac{\gamma}{\epsilon}\left(2 \log(\overline{G}(\sqrt{\epsilon} W)) - \int_{0}^{T} \sqrt{\epsilon} \dot{h}_{t} \dd W_{t} + \frac{1}{2} \int_{0}^{T} \dot{h}_{t}^{2} \dd t \right) \right)
	\Big |
	\tilde{\cF}
	\right] < \infty
	~ ~ ~ \tilde{\bP} \text{-a.s.}.
	\end{align*}
\end{lemma}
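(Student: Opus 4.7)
The proof will mirror Lemma \ref{Lem:Particle Varadhan UI} for the complete measure change, with the key adaptations being (a) that all estimates must hold $\tilde\bP$-a.s.\ after conditioning on $\tilde{\mathcal F}_T$ (since $\mu^N$ is random on $\tilde\Omega$), and (b) that the a priori bound on $\bar X$ no longer comes from a global Lipschitz/Gronwall argument but from the comparison result in Lemma \ref{Lem:Bounding Xbar}.

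The first step is to fix $\tilde\omega$ and apply Cauchy--Schwarz (equivalently, Hölder with three factors) inside the conditional expectation to split the integrand into three pieces:
\begin{align*}
\exp\!\Big(\frac{2\gamma}{\epsilon}\log\overline{G}(\sqrt\epsilon W)\Big),\qquad
\exp\!\Big(-\frac{\gamma}{\sqrt\epsilon}\int_0^T \dot h_t\,\dd W_t\Big),\qquad
\exp\!\Big(\frac{\gamma}{2\epsilon}\int_0^T \dot h_t^2\,\dd t\Big).
\end{align*}
The third factor is deterministic, and after multiplying its $\log$ by $\epsilon$ contributes $\tfrac{\gamma}{2}\|\dot h\|_{L^2}^2<\infty$. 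For the second factor, since $h$ is deterministic the stochastic integral is $\mathcal N(0,\|\dot h\|_{L^2}^2)$, so its exponential moment equals $\exp(\tfrac{\gamma^2}{2\epsilon}\|\dot h\|_{L^2}^2)$; after multiplying by $\epsilon$ the limsup is the finite number $\tfrac{\gamma^2}{2}\|\dot h\|_{L^2}^2$ (this is the exact computation already done in the proof of Lemma \ref{Lem:Particle Varadhan UI}). Both factors are independent of the $\tilde\omega$ conditioning.

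The main work is the first factor. Here I would use Assumption \ref{Ass:General Payoff Growth} to write
\begin{align*}
\log\overline{G}(\sqrt\epsilon W)\ \le\ C_1 + C_2\sup_{0\le t\le T}|\bar X_t(\sqrt\epsilon W)|^{\alpha},
\end{align*}
and then invoke Lemma \ref{Lem:Bounding Xbar} together with the Gronwall-based explicit bounds displayed in the proof of Lemma \ref{Lem:Continuity of SDE}. Those give, $\tilde\bP$-a.s.,
\begin{align*}
\sup_{0\le t\le T}|\bar X_t(\sqrt\epsilon W)|\ \le\ K_{\tilde\omega}\Big(1+\sqrt\epsilon\,\sup_{0\le t\le T}|W_t|\Big),
\end{align*}
where $K_{\tilde\omega}<\infty$ a.s.\ (it depends on $T$, $x_0$, $\sigma$, the monotonicity constant of $b$ and on $\tilde\omega$ through the realisation of $\mu^N$, but not on $\epsilon$ or on $W$). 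Using $(a+b)^\alpha\le 2^{\alpha-1}(a^\alpha+b^\alpha)$ one obtains
\begin{align*}
\sup_{0\le t\le T}|\bar X_t(\sqrt\epsilon W)|^{\alpha}\ \le\ \tilde C_{\tilde\omega}\Big(1+\epsilon^{\alpha/2}\sup_{0\le t\le T}|W_t|^{\alpha}\Big).
\end{align*}
Plugging this back in reduces the remaining task to proving, $\tilde\bP$-a.s.,
\begin{align*}
\limsup_{\epsilon\to 0}\ \epsilon\log\bE_{\bP}\!\left[\exp\!\Big(C'_{\tilde\omega}\,\epsilon^{\alpha/2-1}\sup_{0\le t\le T}|W_t|^{\alpha}\Big)\right]\ <\ \infty.
\end{align*}

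This last bound is exactly the step I expect to be the main obstacle, and it is handled as in Lemmas 7.6--7.7 of \cite{GuasoniRobertson2008}. Since $\alpha<2$ and $\sup_{0\le t\le T}|W_t|$ has a Gaussian tail $\bP[\sup_t|W_t|\ge a]\le 4e^{-a^2/(2T)}$, a Laplace/change-of-variable argument (with $u=\epsilon^{-1/2}$) shows $\bE[\exp(\lambda\, u^{2-\alpha}\sup_t|W_t|^{\alpha})]\le \exp(Cu^2)$ for some $C=C(\lambda,T,\alpha)$, so that $\epsilon\log(\cdot)=u^{-2}\log(\cdot)$ stays bounded as $\epsilon\to 0$. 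Combining the three contributions yields the claimed finiteness $\tilde\bP$-a.s.; the only $\tilde\omega$-dependence enters through the finite constant $K_{\tilde\omega}$, which does not affect the limsup being finite. This completes the plan.
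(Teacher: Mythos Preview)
Your proposal is correct and follows essentially the same approach as the paper: split via Cauchy--Schwarz, handle the $h$ terms by the Gaussian-moment computation, and control the $\overline G$ term through the comparison bound of Lemma~\ref{Lem:Bounding Xbar} together with the Gronwall estimates on $X^{\pm}$, reducing to a $\sup_{t}|W_t|^{\alpha}$ bound resolved as in \cite{GuasoniRobertson2008}. One small refinement: the constant you call $K_{\tilde\omega}$ in fact does \emph{not} depend on $\tilde\omega$, since the comparison processes $X^{\pm}$ of Lemma~\ref{Lem:Bounding Xbar} are built only from $x_0$, $\sigma$, $W$ and the monotonicity constant of $b$ (which is uniform in the measure argument), so the resulting estimate is a deterministic bound rather than merely $\tilde\bP$-a.s.\ finite.
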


\begin{proof}
	The $h$ terms can be dealt with using the same arguments as before.
	The term we are interested in is the $G$ term. Using arguments as in the proof of Lemma \ref{Lem:Particle Varadhan UI}, we only need to prove the following holds,
	\begin{align*}
	\limsup_{\epsilon \rightarrow 0}
	\frac{\epsilon}{2} \log \left(
	\bE_{\bP\otimes \tilde{\bP}} \left[
	\exp \left(
	\frac{4\gamma}{\epsilon}
	\log \Big(G(\bar{X}(\sqrt{\epsilon}W)) \Big)
	\right)
	\Big |
	\tilde{\cF}
	\right]
	\right)
	< \infty \, .
	\end{align*}

 %%%%%%%%%%%%%%%%%%%%%%%%%%%%%%%%%%%%%%%%%%%%%%%%%%%%%%
 %%%%%%%%%%%%%%%%%%%%%%%%%%%%%%%%%%%%%%%%%%%%%%%%%%%%%%
 %%%%%%%%%%%%%%%
 %%%%%%%%%%%%%%% MONOTONE
 %%%%%%%%%%%%%%%
 %%%%%%%%%%%%%%%%%%%%%%%%%%%%%%%%%%%%%%%%%%%%%%%%%%%%%%
 %%%%%%%%%%%%%%%%%%%%%%%%%%%%%%%%%%%%%%%%%%%%%%%%%%%%%%
 
Recall that Lemma \ref{Lem:Bounding Xbar}, yields the bound, $X_{t}^{-} \le \bar{X}_{t} \le X_{t}^{+}$, $\bP \otimes \tilde{\bP}$-a.s.. Hence, for $\alpha \in [1, 2)$ we have the following bound $\bP \otimes \tilde{\bP}$-a.s.,
\begin{align*}
\sup_{0 \le t \le T} | \bar{X}_{t}|^{\alpha}
\le
\sup_{0 \le t \le T} | X_{t}^{+}|^{\alpha}
+ \sup_{0 \le t \le T} | X_{t}^{-}|^{\alpha}
=
| X_{T}^{+}|^{\alpha}
+ | X_{T}^{-}|^{\alpha} \, ,
\end{align*}
where the final equality comes from the fact $|X^{\pm}|$ are
nondecreasing processes. Due to the dependence on the external measure
$\mu^{N}$, all of these results are $\tilde{\bP}$-a.s., but for ease
of presentation we will omit it here. Further recall that by Gronwall's lemma (or see proof of Lemma \ref{Lem:Continuity of SDE}), we can bound the processes $|X^{\pm}|$, thus,
\begin{align*}
|X_{T}^{+}|^{\alpha} 
& \le
C^{\alpha}\Big(x_{0}^{\alpha} \1_{\{x_{0} \ge 0 \}} + C^{\alpha} +\sigma^{\alpha} \Big( \sup_{0 \le s \le T} W_{s} - \inf_{0 \le s \le T} W_{s} \Big)^{\alpha} \Big) e^{C \alpha} \, ,
\\
|X_{T}^{-}|^{\alpha} 
& \le
C^{\alpha}\Big(|x_{0} \1_{\{x_{0} \le 0 \}}|^{\alpha} + C^{\alpha} +\sigma^{\alpha} \Big | \inf_{0 \le s \le T} W_{s} -\sup_{0 \le s \le T} W_{s} \Big |^{\alpha} \Big) e^{C \alpha } \, .
\end{align*}
Due to the fact that $\alpha \ge 1$, and $- \inf_{0 \le s \le T} W_{s}= \sup_{0 \le s \le T} -W_{s} \ge 0$, we have,
\begin{align*}
\Big | \inf_{0 \le s \le T} W_{s} -\sup_{0 \le s \le T} W_{s} \Big |^{\alpha}
=
\Big( \sup_{0 \le s \le T} W_{s} - \inf_{0 \le s \le T} W_{s} \Big)^{\alpha}
\le
C^{\alpha} \Big( \Big(\sup_{0 \le s \le T} W_{s}\Big)^{\alpha} 
+ \Big(\sup_{0 \le s \le T} -W_{s} \Big)^{\alpha} \Big) \, .
\end{align*}
We express the bound w.r.t. the driving Brownian motion $\sqrt{\epsilon}W$ and obtain,
\begin{align*}
\sup_{0 \le t \le T} | \bar{X}_{t}(\sqrt{\epsilon}W)|^{\alpha}
\le
C^{\alpha}\Big(
|x_{0}|^{\alpha} + C^{\alpha} +C^{\alpha}\sigma^{\alpha} \sqrt{\epsilon}^{\alpha} \Big( \Big(\sup_{0 \le s \le T} W_{s}\Big)^{\alpha} 
+ \Big(\sup_{0 \le s \le T} -W_{s} \Big)^{\alpha} \Big) \Big) e^{C \alpha} \, .
\end{align*}
We can simplify this further by noting,
\begin{align*}
\Big(\sup_{0 \le s \le T} W_{s}\Big)^{\alpha} 
+ \Big(\sup_{0 \le s \le T} -W_{s} \Big)^{\alpha}
\le
C^{\alpha}\sup_{0 \le s \le T} |W_{s}|^{\alpha} \, .
\end{align*}
	Using these inequalities we obtain,
		\begin{align*}
	&\frac{\epsilon}{2} \log \left(
	\bE_{\bP \otimes \tilde{\bP}} \left[
	\exp \left(
	\frac{4\gamma}{\epsilon}
	\log (G(\bar{X}(\sqrt{\epsilon}W)))
	\right)
	\Big |
	\tilde{\cF}
	\right]
	\right)
	\\
	& \qquad \le
	\frac{\epsilon}{2} \log \left(
	\bE_{\bP \otimes \tilde{\bP}} \left[
	\exp \left(
	\frac{4\gamma}{\epsilon}C_{1}
	+
	\frac{4\gamma}{\epsilon}C_{2}
	\Big(
	C^{\alpha}\Big(|x_{0}|^{\alpha} + C^{\alpha} +C^{\alpha}\sigma^{\alpha} \sqrt{\epsilon}^{\alpha} \sup_{0 \le s \le T} |W_{s}|^{\alpha} \Big) e^{C \alpha}
	\Big)
	\right)
	\Big |
	\tilde{\cF}
	\right]
	\right)
	\, .
	\end{align*}
By splitting up the terms in the exponential this then reduces to the problem of considering,
\begin{align*}
\frac{\epsilon}{2} \log \left(
\bE_{\bP \otimes \tilde{\bP}} \Big[
\exp \Big(
\frac{4\gamma}{\epsilon^{1- \alpha/2}}C_{2}
C^{\alpha}\sigma^{\alpha} \sup_{0 \le s \le T} |W_{s}|^{\alpha}
\Big)
\Big |
\tilde{\cF}
\Big]
\right)
\, .
\end{align*}	
One can show that this quantity is finite by following the same arguments as \cite{GuasoniRobertson2008}*{pg.16}.	
\end{proof}

We can now prove the second main theorem, the arguments follow similar lines to those we used to conclude the proof of Theorem \ref{Thm:Complete Measure Change}.
\begin{proof}[Proof of Theorem \ref{Thm:Decoupled}]
The continuity of the SDE from Lemma \ref{Lem:Continuity of SDE} along with existence of a unique strong solution under Assumption \ref{Ass:Drift Monotone Assumption}, ensure $\overline{G}$ is a $\tilde{\bP}$-a.s. continuous function under Assumption \ref{Ass:General Payoff Growth}. We then obtain the existence of the maximiser by Lemma 7.1 of \cite{GuasoniRobertson2008}.

Moreover, the $\tilde{\bP}$-a.s. continuity of $\overline{G}$ w.r.t. the Brownian motion and finite variation of $\dot{h}$ implies that to use Varadhan's lemma we only need to check the integrability condition, which is given in Lemma \ref{Lem:UI condition decoupled}. This with Lemma 7.6 in \cite{GuasoniRobertson2008} is enough to complete the proof by arguments on page 18 in \cite{GuasoniRobertson2008}.
\end{proof}

%%%%%%%%%%%%%%%%%%%%%%%%%%%%%%%%%%%%%%%%%%%%%%%%%%%%%%%%%%%%%%%%%%%%%%%%%%
%%%% \BEGIN BIBLIOGRAPHY
%%%%%%%%%%%%%%%%%%%%%%%%%%%%%%%%%%%%%%%%%%%%%%%%%%%%%%%%%%%%%%%%%%%%%%%%%%
%\bibliographystyle{alpha}%wmaainf}%alpha}%plain}
%\bibliographystyle{abbrvnat} % Choose Phys. Rev. style for bibliography
%\bibliographystyle{abbrv}
%\bibliographystyle{alpha}
%\bibliography{IS-MVSDEs}  % ``name``.bib is the name of thedatabase
% \bib, bibdiv, biblist are defined by the amsrefs package.
% \bib, bibdiv, biblist are defined by the amsrefs package.
\begin{bibdiv}
\begin{biblist}

\bib{BudhirajaDupuisFischer2012}{article}{
      author={Budhiraja, Amarjit},
      author={Dupuis, Paul},
      author={Fischer, Markus},
       title={Large deviation properties of weakly interacting processes via
  weak convergence methods},
        date={2012},
        ISSN={0091-1798},
     journal={Ann. Probab.},
      volume={40},
      number={1},
       pages={74\ndash 102},
         url={https://doi.org/10.1214/10-AOP616},
}

\bib{BerntonEtAl2017}{article}{
      author={Bernton, Espen},
      author={Jacob, Pierre~E.},
      author={Gerber, Mathieu},
      author={Robert, Christian~P.},
       title={Inference in generative models using the {W}asserstein distance},
        date={2017},
     journal={arXiv:1701.05146},
}

\bib{BuckdahnEtAl2017}{article}{
      author={Buckdahn, Rainer},
      author={Li, Juan},
      author={Peng, Shige},
      author={Rainer, Catherine},
       title={Mean-field stochastic differential equations and associated
  {PDE}s},
        date={2017},
     journal={The Annals of Probability},
      volume={45},
      number={2},
       pages={824\ndash 878},
}

\bib{Bossy2004}{article}{
      author={Bossy, Mireille},
       title={Optimal rate of convergence of a stochastic particle method to
  solutions of 1d viscous scalar conservation laws},
        date={2004},
     journal={Mathematics of computation},
      volume={73},
      number={246},
       pages={777\ndash 812},
}

\bib{BossyTalay1997}{article}{
      author={Bossy, Mireille},
      author={Talay, Denis},
       title={A stochastic particle method for the {M}ckean-{V}lasov and the
  {B}urgers equation},
        date={1997},
     journal={Mathematics of Computation of the American Mathematical Society},
      volume={66},
      number={217},
       pages={157\ndash 192},
}

\bib{Carmona2016Lectures}{book}{
      author={Carmona, Ren\'{e}},
       title={Lectures of {BSDE}s, stochastic control, and stochastic
  differential games with financial applications},
   publisher={SIAM},
        date={2016},
}

\bib{CrisanMcMurray2017}{unpublished}{
      author={Crisan, Dan},
      author={McMurray, Eamon},
       title={Cubature on {W}iener space for {M}c{K}ean--{V}lasov {SDE}s with
  smooth scalar interaction},
        date={2017},
        note={arXiv:1703.04177},
}

\bib{DupuisEllis2011}{book}{
      author={Dupuis, Paul},
      author={Ellis, Richard~S.},
       title={A weak convergence approach to the theory of large deviations},
   publisher={John Wiley \& Sons},
        date={2011},
      volume={902},
}

\bib{DawsonGaertner1987-DG1987}{article}{
      author={Dawson, Donald~A.},
      author={G\"artner, J\"urgen},
       title={Large deviations from the {M}c{K}ean-{V}lasov limit for weakly
  interacting diffusions},
        date={1987},
        ISSN={0090-9491},
     journal={Stochastics},
      volume={20},
      number={4},
       pages={247\ndash 308},
         url={http://dx.doi.org/10.1080/17442508708833446},
}

\bib{dosReisSalkeldTugaut2017}{unpublished}{
      author={dos Reis, G.},
      author={Salkeld, William},
      author={Tugaut, Julian},
       title={Freidlin-{W}entzell {LDP}s in path space for {McK}ean-{V}lasov
  equations and the functional iterated logarithm law},
        date={2017},
        note={arXiv:1708.04961},
}

\bib{DupuisWang2004}{article}{
      author={Dupuis, Paul},
      author={Wang, Hui},
       title={Importance sampling, large deviations, and differential games},
        date={2004},
     journal={Stochastics: An International Journal of Probability and
  Stochastic Processes},
      volume={76},
      number={6},
       pages={481\ndash 508},
}

\bib{DemboZeitouni2010}{book}{
      author={Dembo, Amir},
      author={Zeitouni, Ofer},
       title={Large deviations techniques and applications, volume 38 of
  stochastic modelling and applied probability},
   publisher={Springer-Verlag, Berlin},
        date={2010},
}

\bib{EkelandTemam1999}{book}{
      author={Ekeland, Ivar},
      author={Temam, Roger},
       title={Convex analysis and variational problems},
   publisher={SIAM},
        date={1999},
}

\bib{Fischer2014}{article}{
      author={Fischer, Markus},
       title={On the form of the large deviation rate function for the
  empirical measures of weakly interacting systems},
        date={2014},
     journal={Bernoulli},
      volume={20},
      number={4},
       pages={1765\ndash 1801},
}

\bib{FlemingRishel1975}{book}{
      author={Fleming, Wendell~H.},
      author={Rishel, Raymond~W.},
       title={Deterministic and stochastic optimal control},
   publisher={Springer-Verlag, Berlin-New York},
        date={1975},
        note={Applications of Mathematics, No. 1},
}

\bib{GlassermanEtAl1999}{article}{
      author={Glasserman, Paul},
      author={Heidelberger, Philip},
      author={Shahabuddin, Perwez},
       title={Asymptotically optimal importance sampling and stratification for
  pricing path-dependent options},
        date={1999},
     journal={Mathematical finance},
      volume={9},
      number={2},
       pages={117\ndash 152},
}

\bib{GobetPagliarani2018}{article}{
      author={Gobet, Emmanuel},
      author={Pagliarani, Stefano},
       title={Analytical approximations of non-linear {SDE}s of
  {M}c{K}ean-{V}lasov type},
        date={2018},
     journal={Journal of Mathematical Analysis and Applications},
}

\bib{GuasoniRobertson2008}{article}{
      author={Guasoni, Paolo},
      author={Robertson, Scott},
       title={Optimal importance sampling with explicit formulas in continuous
  time},
        date={2008},
     journal={Finance and Stochastics},
      volume={12},
      number={1},
       pages={1\ndash 19},
}

\bib{GlassermanWang1997}{article}{
      author={Glasserman, Paul},
      author={Wang, Yashan},
       title={Counterexamples in importance sampling for large deviations
  probabilities},
        date={1997},
     journal={The Annals of Applied Probability},
      volume={7},
      number={3},
       pages={731\ndash 746},
}

\bib{KohatsuOgawa1997}{article}{
      author={Kohatsu-Higa, Arturo},
      author={Ogawa, Shigeyoshi},
       title={Weak rate of convergence for an {E}uler scheme of nonlinear
  {SDE}'s},
        date={1997},
     journal={Monte Carlo Methods and Applications},
      volume={3},
       pages={327\ndash 345},
}

\bib{KosturEtAl2002}{article}{
      author={Kostur, Marcin},
      author={{\L}uczka, J.},
      author={Schimansky-Geier, L.},
       title={Nonequilibrium coupled {B}rownian phase oscillators},
        date={2002},
     journal={Physical Review E},
      volume={65},
      number={5},
       pages={051115},
}

\bib{MitrinovicEtAl2012}{book}{
      author={Mitrinovic, Dragoslav~S.},
      author={Pecaric, Josip},
      author={Fink, Arlington~M.},
       title={Inequalities involving functions and their integrals and
  derivatives},
   publisher={Springer Science \& Business Media},
        date={2012},
      volume={53},
}

\bib{Robertson2010}{article}{
      author={Robertson, Scott},
       title={Sample path large deviations and optimal importance sampling for
  stochastic volatility models},
        date={2010},
     journal={Stochastic Processes and their applications},
      volume={120},
      number={1},
       pages={66\ndash 83},
}

\bib{SzpruchTanTse2017}{unpublished}{
      author={Szpruch, Lukasz},
      author={Tan, Shuren},
      author={Tse, Alvin},
       title={Iterative particle approximation for {M}ckean-{V}lasov {SDE}s
  with application to multilevel {M}onte {C}arlo estimation},
        date={2017},
        note={ArXiv:1706.00907},
}

\bib{TengEtAl2016}{article}{
      author={Teng, Huei-Wen},
      author={Fuh, Cheng-Der},
      author={Chen, Chun-Chieh},
       title={On an automatic and optimal importance sampling approach with
  applications in finance},
        date={2016},
     journal={Quantitative Finance},
      volume={16},
      number={8},
       pages={1259\ndash 1271},
}

\bib{YongZhou1999}{book}{
      author={Yong, Jiongmin},
      author={Zhou, Xun~Yu},
       title={Stochastic controls: Hamiltonian systems and {HJB} equations},
   publisher={Springer Science \& Business Media},
        date={1999},
      volume={43},
}

\end{biblist}
\end{bibdiv}

%%%%%%%%%%%%%%%%%%%%%%%%%%%%%%%%%%%%%%%%%%%%%%%%%%%%%%%%%%%%%%%%%%%%%%%%%%
%%%% \END BIBLIOGRAPHY
%%%%%%%%%%%%%%%%%%%%%%%%%%%%%%%%%%%%%%%%%%%%%%%%%%%%%%%%%%%%%%%%%%%%%%%%%%

\end{document}